\definecolor{vegasgold}{rgb}{0.77, 0.7, 0.35}
\definecolor{darkgoldenrod}{rgb}{0.72, 0.53, 0.04}
\definecolor{gold(metallic)}{rgb}{0.83, 0.69, 0.22}
\DeclareFontFamily{U}{wncy}{}
\DeclareFontShape{U}{wncy}{m}{n}{<->wncyr10}{}
\DeclareSymbolFont{mcy}{U}{wncy}{m}{n}
\DeclareMathSymbol{\Sh}{\mathord}{mcy}{"58}
\newtheorem{theorem}{Theorem}[section]
\newtheorem{lemma}[theorem]{Lemma}
\newtheorem*{theorem*}{Theorem}
\newtheorem*{ass*}{Assumption}
\newtheorem{definition}[theorem]{Definition}
\newtheorem{remark}[theorem]{Remark}
\newtheorem{proposition}[theorem]{Proposition}
\newcommand{\cF}{\mathcal{F}}
\newcommand{\cG}{\mathcal{G}}
\newcommand{\Z}{\mathbb{Z}}
\newcommand{\Q}{\mathbb{Q}}
\newcommand{\F}{\mathbb{F}}
\newcommand{\cL}{\mathcal{L}}
\newcommand{\cO}{\mathcal{O}}
\newcommand{\cS}{\mathcal{S}}
\newcommand{\coker}{\mathrm{coker}}
\newcommand{\op}[1]{\operatorname{#1}}
\numberwithin{equation}{section}
\begin{document}
\title[Selmer stability for elliptic curves]{Selmer stability for elliptic curves in Galois $\ell$-extensions}



\author[S.~Pathak]{Siddhi Pathak}
\address[Pathak]{Chennai Mathematical Institute, H1, SIPCOT IT Park, Kelambakkam, Siruseri, Tamil Nadu 603103, India}
\email{siddhi@cmi.ac.in}

\author[A.~Ray]{Anwesh Ray}
\address[Ray]{Chennai Mathematical Institute, H1, SIPCOT IT Park, Kelambakkam, Siruseri, Tamil Nadu 603103, India}
\email{anwesh@cmi.ac.in}

\subjclass[2020]{11G05, 11R45}

\keywords{Selmer groups of elliptic curves, Selmer stability in $\ell$-extensions}

\thanks{Research of the first author was partially supported by an INSPIRE faculty fellowship. \\
\textbf{Data availability statement:} Data sharing not applicable – no new data generated, or the article describes entirely theoretical research. \\
\textbf{Conflict of interest statement:} The authors do not have any conflict of interest to declare.}

\begin{abstract}
We study the behavior of Selmer groups of an elliptic curve \( E/\Q \) in finite Galois extensions with prescribed Galois group. Fix a prime \( \ell \geq 5 \), a finite group \( G \) with $\#G = \ell^n$, and an elliptic curve \( E/\Q \) with \( \operatorname{Sel}_\ell(E/\Q) = 0 \) and surjective mod-\( \ell \) Galois representation. We show that there exist infinitely many Galois extensions \( F/\Q \) with Galois group \( \operatorname{Gal}(F/\Q) \simeq G \) for which the \( \ell \)-Selmer group \( \operatorname{Sel}_\ell(E/F) \) also vanishes. We obtain an asymptotic lower bound for the number \( \mathcal{M}(G, E; X) \) of such fields $F$ with absolute discriminant \( |\Delta_F|\leq X \), proving that there is an explicit constant $\delta>0$ such that
\[
\mathcal{M}(G, E; X) \gg X^{\frac{1}{\ell^{n-1}(\ell - 1)}} (\log X)^{\delta - 1}.
\]
 The asymptotic for $\mathcal{M}(G, E; X)$ matches the conjectural count for all $G$-extensions $F/\Q$ for which $|\Delta_F|\leq X$, up to a power of $\log X$. This demonstrates that Selmer stability is not a rare phenomenon.
\end{abstract}

\subjclass[2010]{}
\keywords{}

\maketitle

\section{Introduction}
\label{section:intro}

\subsection{Background and motivation}
\par Let \( E \) be an elliptic curve defined over a number field \( K \). The group \( E(K) \) of \( K \)-rational points, known as the Mordell–Weil group, is finitely generated, and its rank is a fundamental arithmetic invariant. One fruitful approach to studying the structure of \( E(K) \) is through Galois cohomology. For a fixed integer \( n \geq 1 \), the \( n \)-torsion subgroup \( E[n] \subset E(\overline{K}) \) carries a natural action of the absolute Galois group \( \operatorname{G}_K := \operatorname{Gal}(\overline{K}/K) \). The Selmer group \( \operatorname{Sel}_n(E/K) \subset H^1(K, E[n]) \), defined by imposing local conditions at all primes of \( K \), sits in a natural short exact sequence,\[0\rightarrow E(K)/nE(K)\rightarrow \op{Sel}_n(E/K)\rightarrow \Sh(E/K)[n]\rightarrow 0\]
encoding information about both the Mordell–Weil group and the Tate–Shafarevich group \( \Sh(E/K) \).

\par A central question in arithmetic geometry is to understand how the Mordell–Weil rank of a fixed elliptic curve varies in field extensions. For instance, given an elliptic curve \( E/\mathbb{Q} \), it is natural to ask how often its rank remains the same in a finite extension \( L/\mathbb{Q} \). More specifically, one studies how frequently \( \operatorname{rank} E(L) = \operatorname{rank} E(\mathbb{Q}) \) as \( L \) ranges over number fields of fixed degree, ordered by discriminant. 

\par In particular, these diophantine stability questions are studied for cyclic extensions of $\Q$ with prime degree. Mazur and Rubin \cite{MazurRubinQtwists} studied the distribution of 2-Selmer ranks in families of quadratic twists of a fixed elliptic curve. Their techniques yield results on the stability of rank across large sets of quadratic twists, and under certain conditions, also demonstrate that arbitrarily large ranks can arise after base change by quadratic extensions. For any prime \( \ell \), Klagsbrun--Mazur--Rubin \cite{KMR}, Mazur--Rubin \cite{mazur2018diophantine} have studied the behavior of \( \ell \)-Selmer groups in \( \mathbb{Z}/\ell\mathbb{Z} \)-extensions, establishing patterns of stability and growth. Smith has recently obtained similar results using arithmetic statistics and the geometry of numbers \cite{smith2022distribution1, smith2022distribution2}. 

The broader phenomenon of rank growth and stability in families of number field extensions has attracted significant recent attention. For example, see the work of Lemke-Oliver and Thorne \cite{Oliverthorne}, Shnidman and Weiss \cite{ShnidmanWeiss}, Beneish--Kundu--Ray \cite{BKR}, Berg--Ryan--Young \cite{Berg}, Keliher \cite{keliher}, Pathak--Ray \cite{pathakray} and Keliher--Park \cite{keliherpark}. 

Understanding Galois extensions of a fixed number field with a prescribed Galois group is a fundamental problem in number theory. Conjectures predicting the asymptotic number of such extensions, ordered by their absolute discriminant, were proposed by Malle \cite{Malle1, Malle2}. These conjectures may be viewed as statistical refinements of the classical inverse Galois problem. Substantial progress towards this has been made in special cases. For example, the conjecture is known for abelian groups, due to the work of Mäki \cite{maki1985density} and Wright \cite{wright1989distribution}, and more recently for certain nilpotent groups by Koymans and Pagano \cite{koymans2023malle}. Although the inverse Galois problem for finite solvable groups over number fields was resolved by Shafarevich \cite{Shaferevich}, Malle’s conjecture remains open for general solvable groups.

\subsection{Main results}
In this article, we apply Galois-theoretic methods to study the behavior of the Mordell–Weil rank and Selmer group of an elliptic curve \( E/\Q \) in certain families of Galois extensions \( L/\Q \) with a prescribed Galois group. Fix a prime \( \ell \geq 5 \), a finite \( \ell \)-group \( G \), and an elliptic curve \( E/\Q \). Assume that \( \operatorname{Sel}_\ell(E/\Q) = 0 \), or equivalently, $E(\Q)[\ell]=0$, $\op{rank}E(\Q)=0$ and $\Sh(E/\Q)[\ell]=0$. Further, suppose that the Galois representation
\[
\rho_{E,\ell} : \operatorname{Gal}(\overline{\Q}/\Q) \longrightarrow \operatorname{GL}_2(\Z/\ell\Z)
\]
associated to \( E[\ell] \) is surjective. Under these assumptions, we show that there exist infinitely many Galois extensions \( L/\Q \) with \( \operatorname{Gal}(L/\Q) \simeq G \) such that \( \operatorname{Sel}_\ell(E/L) = 0 \). Moreover, we obtain an asymptotic lower bound for the number of such extensions ordered by their absolute discriminant.

For each real number \( X > 0 \), let \( \mathcal{N}(G; X) \) denote the number of Galois extensions \( L/\Q \) with Galois group \( G \) and discriminant satisfying \( |\Delta_L| \leq X \). The Hermite–Minkowski theorem ensures that \( \mathcal{N}(G; X) \) is finite for every \( X \), and hence is well-defined.

Let $\cG$ be a finite group and \( \pi_\cG : \cG \hookrightarrow S_{\#\cG} \) denote the regular representation of \( \cG \). For \( g \in \cG \), define the index of \( g \), denoted \( \operatorname{ind}(g) \), as
\[
\operatorname{ind}(g) := \#\cG - \#\text{ orbits of } \pi_\cG(g).
\]
We set $a(\cG) := \left( \min_{g \neq 1} \operatorname{ind}(g) \right)^{-1}$ and note that when $\cG$ is an $\ell$-group with $\ell^n$ elements, $a(\cG)= \left( \ell^{n-1}(\ell - 1) \right)^{-1}$. The weak form of Malle’s conjecture (cf. \cite[p.316]{Malle1}) predicts that for any \( \epsilon > 0 \), there exist constants \( c_1(\cG), c_2(\cG; \epsilon) > 0 \) such that
\[
c_1(\cG)\, X^{a(\cG)} \leq \mathcal{N}(\cG; X) \leq c_2(\cG; \epsilon)\, X^{a(\cG) + \epsilon}
\]
for all sufficiently large \( X \). This has been established for all finite nilpotent groups by Kl\"uners and Malle \cite{klunersmalle}. For finite nilpotent groups satisfying additional hypotheses, Koymans and Pagano \cite{koymans2023malle} have proven a stronger version of Malle's conjecture, obtaining the exact asymptotic.

On the other hand, let \( \mathcal{M}(\cG, E; X) \) denote the number of Galois extensions \( F/\Q \) with \( \operatorname{Gal}(F/\Q) \simeq \cG \), \( |\Delta_F| \leq X \), and \( \operatorname{Sel}_\ell(E/F) = 0 \). Note that for such fields $F/\Q$, $\op{rank}E(F)=0$ and $\Sh(E/F)[\ell]=0$. Thus, the Mordell--Weil rank and the $\ell$-part of the Tate-Sharevich group of $E$ remain stable in $F/\Q$. For a group $G$ with $\# G=\ell^n$, our main result establishes asymptotic lower bounds for \( \mathcal{M}(G, E; X) \) as \( x \to \infty \), showing that it grows comparably to \( \mathcal{N}(G; X) \). This demonstrates that the vanishing of the \( \ell \)-Selmer group over such \( G \)-extensions is not a rare phenomenon.

\begin{theorem}\label{main thm of section 4}
Let \( \ell \geq 5 \) be a prime number, and let \( E/\Q \) be an elliptic curve with \( \operatorname{Sel}_\ell(E/\Q) = 0 \). Assume further that the representation \( \rho_{E,\ell} \) is surjective. Then
\[
\mathcal{M}(G, E; X) \gg X^{\frac{1}{\ell^{n-1}(\ell - 1)}} (\log X)^{\delta - 1} = X^{a(G)} (\log X)^{\delta - 1},
\]
where \( \delta := \frac{\ell^2 - \ell - 1}{\ell^{n-1}(\ell^2 - 1)} \).
\end{theorem}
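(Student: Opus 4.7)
The plan is to induct on $n$, where $\#G = \ell^n$, constructing $G$-extensions as towers of $\Z/\ell\Z$-extensions while preserving Selmer vanishing at each step.

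\textbf{Base case ($n=1$, $G \simeq \Z/\ell\Z$).} For primes $q \equiv 1 \pmod{\ell}$ at which $E$ has good reduction, let $F_q \subset \Q(\zeta_q)$ denote the unique cyclic degree-$\ell$ subfield; it is totally ramified at $q$, unramified elsewhere, with $|\Delta_{F_q}| = q^{\ell-1}$. I would impose the further Chebotarev condition $E(\F_q)[\ell] = 0$, equivalently that $\rho_{E,\ell}(\op{Frob}_q) \in \op{GL}_2(\F_\ell)$ has no eigenvalue $1$; since $\rho_{E,\ell}$ is surjective, this cuts out a positive-density union of conjugacy classes within $\{\det = 1\}$. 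To conclude $\op{Sel}_\ell(E/F_q) = 0$, I use two facts: (a) because $\op{GL}_2(\F_\ell)$ has abelianization $\F_\ell^\times$ and $\op{SL}_2(\F_\ell)$ is perfect for $\ell \geq 5$, it has no non-trivial $\ell$-quotient, so $F_q \cap \Q(E[\ell]) = \Q$ and $E[\ell]^{G_{F_q}} = 0$; hence inflation--restriction gives $H^1(\Q, E[\ell]) \xrightarrow{\sim} H^1(F_q, E[\ell])^G$; (b) a control-theorem argument in the style of Mazur compares local Selmer conditions: at all unramified primes these agree, while at $q$ the correction term vanishes because $E(\F_q)[\ell] = 0$ forces $E(F_{q,q})[\ell] = 0$ (the residue field is preserved in the totally tame extension $F_{q,q}/\Q_q$). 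Thus $\op{Sel}_\ell(E/F_q)^G \hookrightarrow \op{Sel}_\ell(E/\Q) = 0$, and since $\op{Sel}_\ell(E/F_q)$ is an $\F_\ell[G]$-module for an $\ell$-group $G$, vanishing of $G$-invariants forces triviality. Chebotarev combined with Dirichlet's theorem gives $\gg X^{1/(\ell-1)}/\log X$ such $F_q$, exceeding the required bound for $n=1$.

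\textbf{Inductive step.} Assume the theorem for $\ell$-groups of order $\ell^{n-1}$, and let $\#G = \ell^n$. Choose a central subgroup $Z \leq G$ of order $\ell$. The inductive hypothesis furnishes many $(G/Z)$-extensions $F_0/\Q$ with $\op{Sel}_\ell(E/F_0) = 0$. For each such $F_0$, I would replicate the base-case construction over $F_0$: choose primes $\mathfrak{q}$ of $F_0$ lying above rational primes $q \equiv 1 \pmod{\ell}$ that split completely in $F_0$ and for which $\rho_{E,\ell}(\op{Frob}_q)$ has no eigenvalue $1$. This yields $\Z/\ell\Z$-extensions $F/F_0$ with $\op{Sel}_\ell(E/F) = 0$ (the previous argument carries over since $\rho_{E,\ell}|_{G_{F_0}}$ is still surjective). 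To ensure $F/\Q$ is Galois with $\op{Gal}(F/\Q) \cong G$, I must solve the central embedding problem $G \twoheadrightarrow G/Z$, which is standard via Shafarevich/class-field-theoretic methods by choosing $\mathfrak{q}$ compatibly with the obstruction in $H^2(G/Z, Z)$. Summing over $F_0$ and admissible $\mathfrak{q}$, using the tower discriminant formula $|\Delta_F| = |\Delta_{F_0}|^\ell \cdot N_{F_0/\Q}(\mathfrak{q})^{\ell - 1}$, and balancing parameters yields the target lower bound.

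\textbf{Main obstacle.} The principal technical difficulty is the quantitative counting in the inductive step. One needs: (i) a Chebotarev density estimate uniform across the varying base fields $F_0$ to count admissible primes $\mathfrak{q}$ in bulk; (ii) compatibility of the Selmer-vanishing Frobenius condition with the class-field-theoretic constraints resolving the embedding-problem obstruction, so the same prime $\mathfrak{q}$ satisfies both; and (iii) careful discriminant and logarithmic bookkeeping in the double sum over $F_0$ and $\mathfrak{q}$ to extract precisely the exponent $\delta - 1$. The particular value $\delta = (\ell^2 - \ell - 1)/(\ell^{n-1}(\ell^2 - 1))$ should emerge from the Chebotarev density of the allowable Frobenius class, where $(\ell^2 - \ell - 1)/(\ell^2 - 1)$ is exactly the proportion of $\op{SL}_2(\F_\ell)$ elements with no eigenvalue $1$ (the complement of the $\ell^2$ unipotents in $\{\tr = 2\}$), combined with an inductive depth factor of $1/\ell^{n-1}$ arising from requiring the primes to split completely through the intermediate fields.
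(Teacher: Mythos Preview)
Your base case is essentially correct and matches the paper's Selmer-control argument (Proposition~\ref{basic prop for Sel(E/L)=0}). However, your inductive step has a genuine gap, and the paper's approach is structurally different.

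\textbf{The gap.} In the inductive step you propose to build $F$ over a variable base $F_0$ by adjoining a cyclic $\ell$-extension ramified at a well-chosen $\mathfrak{q}$. But when the central extension $1 \to Z \to G \to G/Z \to 1$ is \emph{non-split}, no compositum $F_0 \cdot F_q$ with $F_q/\Q$ cyclic can realize $G$; one must first solve the embedding problem (as in Section~\ref{s 2}) and then twist. Solvability of that problem requires the Scholz condition $(\mathfrak{S}_N)$ on $F_0$, which your inductive hypothesis does not record. Even granting this, the set of admissible $q$ is a Chebotarev class in an extension that depends on $F_0$ (cf.\ Proposition~\ref{constructing tilde L satisfying SN}), so the uniformity issue you flag under ``Main obstacle'' is real and unresolved: you would need effective Chebotarev with error terms uniform over an unbounded family of base fields, and your proposal offers no mechanism for this.

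\textbf{How the paper avoids this.} The paper does \emph{not} induct on $n$ in the counting. Instead it fixes, once and for all, a single $G$-extension $\widetilde{L}/\Q$ satisfying the conditions of Proposition~\ref{technical propn} (this is where the embedding-problem machinery is used, but only qualitatively). All further $G$-extensions are obtained as twists $\widetilde{L}^f$ by characters $f \in H^1(\Q,\Z/\ell\Z)$ with prescribed local behaviour: unramified at $\Sigma \cup \{p_1,\dots,p_m\}$, ramified only at primes in $\mathfrak{T}_{E,L}$. The discriminant of $\widetilde{L}^f$ is controlled by the Kl\"uners--Malle bound (Proposition~\ref{KM result}), $|\Delta_{\widetilde{L}^f}| \ll |\Delta_{\Q_f}|^{\ell^{n-1}}$. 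The count of such $f$ is then handled by Wiles' formula (after killing the dual Selmer group via an auxiliary set $S_0$, Lemma~\ref{choice of S_0}) together with a Tauberian theorem applied to the Euler product $\prod_{q \in \mathfrak{T}_{E,L}}(1 + (\ell-1)q^{-s})$. This reduces everything to a single Chebotarev density computation over $\Q$ (Lemma~\ref{density lemma}), from which $\delta$ emerges directly; no double sum over base fields, and no uniformity issues.
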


It follows that the asymptotic growth of \( \mathcal{M}(G, E; X) \) matches that of \( \mathcal{N}(G; X) \) up to a power of \( \log X \). In particular,
\[
\lim_{X \to \infty} \frac{\log \mathcal{M}(G, E; X)}{\log \mathcal{N}(G; X)} = 1.
\]
For $\ell=5$, it is proven by Bhargava and Shankar \cite{bhargavashankar5} that there is a positive density of elliptic curves (ordered by height) $E_{/\Q}$ for which $\op{Sel}_\ell(E/\Q)=0$. On the other hand, a result of Duke \cite{duke} shows that $\rho_{E,\ell}$ is surjective for almost all elliptic curves $E_{/\Q}$. Thus for $\ell=5$, the conditions of Theorem \ref{main thm of section 4} hold for a positive density of elliptic curves over $\mathbb{Q}$.
\par It is conceivable that the results of this article can be generalized to elliptic curves defined over arbitrary number fields. However, for ease of exposition, we restrict ourselves in this paper to the case of elliptic curves defined over $\Q$.

\subsection{Methodology}
Let \( L/K \) be an \( \ell \)-cyclic extension of number fields, and let \( E \) be an elliptic curve defined over \( \Q \), satisfying the hypotheses of Theorem \ref{main thm of section 4}. Suppose in addition that the \( \ell \)-Selmer group of \( E \) over \( K \) vanishes. Analyzing local to global control arguments for Selmer groups, we show that under certain ramification and splitting conditions for the extension $L/K$, the implication
\[
\operatorname{Sel}_\ell(E/K) = 0  \Rightarrow  \operatorname{Sel}_\ell(E/L) = 0
\]
holds. The argument then follows via induction on the length of $G$. The inverse Galois problem for finite $\ell$-groups is known from the work of Reichardt \cite{Reichardt} and Scholz \cite{Scholz}. The Galois cohomological strategy allows us to inductively construct towers of number fields 
\[\Q=L_0\subset L_1\subset \dots \subset L_{n-1}\subset L_n=F,\] where $\op{Gal}(F/\Q)\simeq G$ and $L_i/L_{i-1}$ is an $\ell$-cyclic extension. Furthermore, extending the methods of Kl\"uners and Malle, we are able to construct many such extensions, for which at each stage, the implication 
\[\op{Sel}_\ell(E/L_{i-1})=0\Rightarrow \op{Sel}_\ell(E/L_{i})=0\]
is satisfied. A synthesis of methods from Galois cohomology, arithmetic statistics and analytic number theory give us an asymptotic lower bound, in terms of $X$, for the number of such extensions $F/\Q$ with $|\Delta_F|\leq X$.

\subsection{Organization}
Including the introduction, the article is organized into four sections. Section \ref{s 2} revisits the classical strategy of Reichardt \cite{Reichardt} and Scholz \cite{Scholz} for resolving the inverse Galois problem for finite \( \ell \)-groups over \( \Q \). Our emphasis is on making explicit which primes split or ramify in such extensions. This information is essential for analyzing the behavior of Selmer groups in Galois towers. In Section \ref{s 3}, we establish sufficient conditions on an \( \ell \)-cyclic extension \( L/K \) under which the vanishing of \( \operatorname{Sel}_\ell(E/K) \) implies the vanishing of \( \operatorname{Sel}_\ell(E/L) \). These build upon previous constructions in \cite[Section 2]{pathakray}. We conclude this section by proving the existence of infinitely many \( G \)-extensions \( L/\Q \) such that \( \operatorname{Sel}_\ell(E/L) = 0 \). Finally, Section \ref{s 4} is devoted to the proof of Theorem \ref{main thm of section 4}. We recall key results of Kl\"uners and Malle concerning the enumeration of \( G \)-extensions of \( \Q \) ordered by absolute discriminant. The problem is reduced to counting global Galois cohomology classes subject to prescribed local conditions. To estimate the size of these cohomology groups, we invoke a formula of Wiles which yields precise asymptotics, allowing us to deduce the desired lower bounds.

\section{ The inverse Galois problem for finite $\ell$-groups}\label{s 2}
\medskip 

\par Shafarevich proved that given any number field $K$ and a finite solvable group $G$, there exists a Galois extension $L/K$ with $\op{Gal}(L/K)\simeq G$ (see \cite{Shafarevich} and \cite[Chapter IX, Section 6]{NSW}). Let $\ell$ be an odd prime number. Given a natural number $k$, let $\mu_{k}$ denote the set of $k$-th roots of unity. Fix throughout this article a finite group $G$ with $\# G=\ell^n$. In this special case, the inverse Galois problem over $\Q$ for the group $G$ is due to Reichardt \cite{Reichardt} and Scholz \cite{Scholz}. An exposition of their method can be found in \cite{massey2006inverse}. In this section, we review this construction with a view towards its application in establishing diophantine stability results, for which we need an understanding of the ramification of primes. More precisely, we shall see that these sets of primes are prescribed by \emph{Chebotarev conditions}. \\

\subsection{The embedding problem}
We show that there is a Galois extension $L/\Q$ with $\op{Gal}(L/\Q)\simeq G$. We note that $G$ has a nontrivial center. Thus by induction on $n=\op{length}(G)$, there is a filtration of $G$ by normal subgroups $G_i$:
\begin{equation}\label{filtration on G}\{1\}=G_n\subset G_{n-1}\subset \dots \subset G_{1}\subset G_0=G\end{equation} such that for all $i$,
\begin{itemize}
    \item $G_{i-1}/G_{i}\simeq \Z/\ell\Z$,
    \item $1\to \Z/\ell\Z\to G/G_i\to G/G_{i-1}\to 1$ is a central extension of $G/G_{i-1}$. 
\end{itemize}
 We construct Galois extensions
\[K=L_0\subset L_1\subset L_2\subset \dots L_{n-1}\subset L_n=L\] such that $\op{Gal}(L_i/K)\simeq G/G_i$. We assume that $n\geq 1$ and reduce the solution to that of an \emph{embedding problem}.\\ 

Let $G$ be a finite $\ell$-group and let $\widetilde{G}$ be a central extension of $G$
\begin{equation}\label{extension of G}1\to \Z/\ell\Z\xrightarrow{\iota} \widetilde{G} \xrightarrow{\pi} G\rightarrow 1.\end{equation} Let $\op{G}_{\Q}:=\op{Gal}(\overline{\Q}/\Q)$ and assume that there exists a Galois extension $L/\Q$ such that ${\op{Gal}(L/\Q)\simeq G}$. This gives rise to a surjection $\varphi:\op{G}_{\Q}\rightarrow G$ such that $\overline{\Q}^{\op{ker}\varphi}=L$. The embedding problem asks whether $L$ can be embedded into a larger Galois extension $\widetilde{L}/\Q$ along with an isomorphism $\op{Gal}(\widetilde{L}/\Q)\simeq \widetilde{G}$, such that the following diagram commutes:
\[
\begin{array}{ccccccccc}
 & 0 & \to & \op{Gal}(\widetilde{L}/L) & \to & \op{Gal}(\widetilde{L}/\Q) & \to & \op{Gal}(L/\Q) & \to 0 \\
 &   &     & \downarrow &   & \downarrow &   & \downarrow &  \\
 & 0 & \to & \Z/\ell \Z  & \to & \widetilde{G}  & \to & G  & \to 0.
\end{array}
\]
In the above diagram, the downward arrows are isomorphisms. Equivalently, the embedding problem is solvable if $\varphi$ can be lifted to a surjective homomorphism $\widetilde{\varphi}: \op{G}_\Q\twoheadrightarrow \widetilde{G}$. Indeed, setting $\widetilde{L}:=\overline{\Q}^{\op{ker}\widetilde{\varphi}}$ we find that $\op{Gal}(\widetilde{L}/\Q)\simeq \widetilde{G}$. Note that if the extension \eqref{extension of G} is non-split then any lift $\widetilde{\varphi}$ is surjective. In order to inductively construct the extensions $L_i$, it suffices to solve the embedding problem for 
\[1\rightarrow \Z/\ell \Z\rightarrow G/G_{i+1}\rightarrow G/G_i\rightarrow 1\] for each $i$ in the range $0\leq i <n$. \\

\subsection{Cohomological parametrization of extension classes}
\par We recall how to parametrize group extensions by cohomology classes. Let $A$ be a given abelian group and $G$ be an arbitrary finite group, and consider a central extension of $G$ by $A$:
\[1\rightarrow A\rightarrow \widetilde{G}\xrightarrow{\pi} G\rightarrow 1.\]
Then $A$ has an induced $G$-module structure which we describe. Let $\eta: G\rightarrow \widetilde{G}$ be a set theoretic section of the map $\pi: \widetilde{G}\rightarrow G$. Since $\pi$ is surjective, such a map exists. We define the action of $G$ on $A$ as follows: for $a\in A$ and $\sigma\in G$,
\[\sigma\cdot a:=\eta(\sigma) \, a \, \eta(\sigma)^{-1}.\] There are a few things to take note of here. First, since $A$ is a normal subgroup of $G$, the element $\eta(\sigma) \, a \, \eta(\sigma)^{-1}$ is in $A$. Next, since $A$ is abelian, $\sigma\cdot a$ is independent of the choice of set theoretic section $\eta$. From here on, when referring to $A$ as a $G$-module, it will be with respect to this chosen action, and $H^i(G, A)$ will be the associated cohomology groups. Two extensions $\widetilde{G}_{1}$ and $\widetilde{G}_{2}$ are said to be equivalent if there is an isomorphism $f:\widetilde{G}_1\xrightarrow{\sim} \widetilde{G}_2$ such that the following diagram commutes:
$$
\begin{array}{ccccc}
A & \longrightarrow & \widetilde{G}_{1} & \longrightarrow & G  \\
\| & & \downarrow{f} & & \| \\
A & \longrightarrow & \widetilde{G}_{2} & \longrightarrow & G.
\end{array}
$$

\begin{proposition}
    Let $G$ be a finite group and $A$ be an abelian group which is also a $G$-module. There is a bijection between
    \begin{itemize}
        \item equivalence classes of $A$-extensions of $G$:
        \[1\mapsto A\xrightarrow{\iota} \widetilde{G}\xrightarrow{\pi} G\rightarrow 1\]
        such that for any set theoretic section $\eta:G\rightarrow \widetilde{G}$ of $\pi$, we have that $\sigma\cdot a=\eta(\sigma) \, a \, \eta(\sigma)^{-1}$
        \item elements of $H^2(G, A)$. 
    \end{itemize}
    The class associated to $\widetilde{G}$ is denoted by $\theta_{\widetilde{G}}\in H^2(G, A)$. Moreover, the association has the property that the \emph{trivial} class $\widetilde{G}:=G\ltimes A$ corresponds to the trivial element in $H^2(G, A)$.
\end{proposition}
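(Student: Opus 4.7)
The statement is the classical correspondence between extensions and $H^2$, so the plan is to exhibit explicit maps in both directions and show they are mutually inverse. First I would construct the cocycle attached to an extension $1 \to A \xrightarrow{\iota} \widetilde{G} \xrightarrow{\pi} G \to 1$. Fix a normalized set-theoretic section $\eta : G \to \widetilde{G}$ with $\eta(1)=1$. For $\sigma,\tau \in G$ the elements $\eta(\sigma)\eta(\tau)$ and $\eta(\sigma\tau)$ have the same image in $G$, so they differ by an element of $A$, giving
\[
\eta(\sigma)\eta(\tau) = \iota\bigl(c(\sigma,\tau)\bigr)\, \eta(\sigma\tau)
\]
for a unique function $c : G\times G \to A$. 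Associativity of multiplication in $\widetilde{G}$, together with the defining action $\sigma\cdot a = \eta(\sigma)\, a\, \eta(\sigma)^{-1}$, translates directly into the $2$-cocycle identity
\[
\sigma\cdot c(\tau,\rho) - c(\sigma\tau,\rho) + c(\sigma,\tau\rho) - c(\sigma,\tau) = 0,
\]
so $c \in Z^2(G,A)$. Changing the section from $\eta$ to $\eta'$ yields $\eta'(\sigma) = \iota(b(\sigma))\eta(\sigma)$ for some $b : G \to A$, and a short calculation shows $c'$ differs from $c$ by the coboundary $db$; hence the class $\theta_{\widetilde{G}} := [c] \in H^2(G,A)$ depends only on $\widetilde{G}$. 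If $f : \widetilde{G}_1 \xrightarrow{\sim} \widetilde{G}_2$ is an equivalence, transporting a section along $f$ gives the same cocycle, so $\theta$ descends to equivalence classes of extensions.

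Next I would construct the inverse. Given a $2$-cocycle $c : G\times G \to A$, let $\widetilde{G}_c := A \times G$ as a set and define
\[
(a,\sigma)\cdot(b,\tau) := \bigl(a + \sigma\cdot b + c(\sigma,\tau),\, \sigma\tau\bigr).
\]
The cocycle condition is exactly what is needed for associativity, and the normalization $c(1,\sigma)=c(\sigma,1)=0$ (which may be assumed after a coboundary adjustment) produces the identity $(0,1)$ and inverses. The obvious maps $A \hookrightarrow \widetilde{G}_c$, $a \mapsto (a,1)$, and $\widetilde{G}_c \twoheadrightarrow G$, $(a,\sigma) \mapsto \sigma$, fit into a short exact sequence, and the canonical section $\sigma \mapsto (0,\sigma)$ recovers the $G$-action on $A$ via conjugation, confirming that we land in the correct class of extensions. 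A coboundary change $c \mapsto c + db$ is matched by the isomorphism $(a,\sigma)\mapsto (a + b(\sigma), \sigma)$, so the resulting equivalence class depends only on $[c] \in H^2(G,A)$.

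It then remains to check that the two assignments are mutually inverse. Starting from $\widetilde{G}$ with section $\eta$ and associated cocycle $c$, the map $\widetilde{G}_c \to \widetilde{G}$ sending $(a,\sigma) \mapsto \iota(a)\eta(\sigma)$ is a bijection and a homomorphism by the definition of $c$, providing an equivalence of extensions; conversely, starting from $c$, building $\widetilde{G}_c$ and computing the cocycle via the section $\sigma\mapsto(0,\sigma)$ returns $c$ on the nose. Finally, for the trivial class one uses the section $\sigma\mapsto(1,\sigma)$ in the semidirect product $G \ltimes A$, for which $\eta(\sigma)\eta(\tau)=\eta(\sigma\tau)$ identically, so the associated cocycle is zero.

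The arguments are entirely formal, so there is no genuine obstacle; the only place requiring a little care is the bookkeeping showing that different choices of set-theoretic section produce cohomologous cocycles and that equivalent extensions produce the same class. This is standard and can be handled with a direct computation, so I would be content to point the reader to any standard reference (for example \cite[Chapter~I, Section~2]{NSW}) rather than reproduce it in full.
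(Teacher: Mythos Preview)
Your proof is correct and follows the standard construction; the paper itself does not give an argument at all but simply cites \cite[Theorem~6.6.3]{weibel}, so your write-up is in fact more detailed than what appears there. Your closing suggestion to defer to a reference is exactly what the paper does.
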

\begin{proof}
    This is a standard result, cf. \cite[Theorem 6.6.3]{weibel}.
\end{proof}

\subsection{The method of Reichardt and Scholz}
\begin{definition}Let $G$ be an $\ell$-group with $\# G = \ell^n$. Let \( L / \mathbb{Q} \) be a Galois extension with \( \op{Gal}( L/\Q) \simeq G \) and choose \( N \geq n \). The extension $L/\Q$ satisfies the \emph{Scholz property} for \( N \), denoted \( (\mathfrak{S}_N) \), if:
\begin{itemize}
    \item every rational prime \( p \) ramified in $L$ satisfies \( p \equiv 1 \pmod{\ell^N} \),
    \item for each prime \( v|p\) of \( L \), we have that \( L_v / \mathbb{Q}_p \) is totally ramified.
\end{itemize}
\end{definition}
Assuming that there exists a Galois extension $L/\Q$ satisfying $(\mathfrak{S}_N)$ with $\op{Gal}(L/\Q)\simeq G$, we embed $L$ into a larger Galois extension $\widetilde{L}$ with $\op{Gal}(\widetilde{L}/\Q)\simeq \widetilde{G}$, also satisfying $(\mathfrak{S}_N)$.
\begin{theorem}\label{inverse-Gal-thm}
Let $\ell$ be an odd prime number, $G$ be a finite group with $\# G=\ell^n$, and fix $N\geq n$. Suppose that there exists a Galois extension $L/\Q$ with $\op{Gal}(L/\Q)\simeq G$ which satisfies \( (\mathfrak{S}_N) \). Let $\{p_1, \dots, p_m\}$ denote the primes that ramify in $L$. Then the embedding problem for \( L \) and \( \widetilde{G} \) is solvable. Moreover, the solution \( \widetilde{L} \) can be chosen to satisfy \( (\mathfrak{S}_N) \) with at most one additional prime outside $\{p_1, \dots, p_m\}$ being ramified in $\widetilde{L}$.
\end{theorem}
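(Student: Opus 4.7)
The plan is to prove the statement in two stages: first establishing solvability of the embedding problem via local-to-global principles, and then modifying the global solution to enforce $(\mathfrak{S}_N)$ by introducing at most one auxiliary ramified prime. I would translate the embedding problem into lifting the surjection $\varphi: \op{G}_\Q \twoheadrightarrow G$ to $\widetilde\varphi: \op{G}_\Q \to \widetilde G$; surjectivity of the lift is automatic when the extension $\widetilde G \to G$ is non-split, and otherwise can be arranged by multiplying by a suitably chosen $\Z/\ell\Z$-valued global character. By the cohomological classification of central extensions recalled just above, the obstruction to existence of any lift is the inflated class $\alpha := \varphi^{*}(\theta_{\widetilde G}) \in H^{2}(\Q, \Z/\ell\Z)$. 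Via the Brauer--Hasse--Noether injection $H^{2}(\Q, \Z/\ell\Z) \hookrightarrow \bigoplus_{v} H^{2}(\Q_{v}, \Z/\ell\Z)$, it suffices to verify that each local restriction $\alpha_{v}$ vanishes. At $v=\infty$ this holds because $\ell$ is odd. At every finite prime unramified in $L$ --- which includes $v=\ell$, since $\ell\not\equiv 1\pmod{\ell^{N}}$ forces $\ell \notin \{p_{1},\dots,p_{m}\}$ --- the map $\op{G}_{\Q_{v}}\to G$ factors through the pro-cyclic quotient $\op{G}_{\Q_{v}}/I_{v} \simeq \widehat\Z$, which has cohomological dimension $1$, so $\alpha_{v}=0$. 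At a ramified prime $p_{i}$ the decomposition group $D_{p_{i}}$ is cyclic of order $\ell^{k_{i}}$ with $k_{i}\leq n$ (tame and totally ramified by Scholz), and $\mu_{\ell^{N}}\subset \Q_{p_{i}}$ with $N\geq n\geq k_{i}$; local class field theory then produces a character of $\op{G}_{\Q_{p_{i}}}$ of order $\ell^{k_{i}+1}$ refining the one cut out by $L/\Q_{p_{i}}$, giving $\alpha_{p_{i}}=0$. Hence $\alpha = 0$ and a global lift $\widetilde\varphi_{0}$ exists.

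Setting $\widetilde L_{0} := \overline{\Q}^{\ker \widetilde\varphi_{0}}$, every other lift has the form $\widetilde\varphi_{0}\cdot\chi$ for $\chi\in H^{1}(\Q,\Z/\ell\Z) \simeq \op{Hom}(\op{G}_{\Q}, \Z/\ell\Z)$, equivalently a $\Z/\ell\Z$-extension $M/\Q$. Since $\widetilde L_{0}$ may ramify outside $\{p_{1},\dots,p_{m}\}$ or fail to be totally ramified above some $p_{i}$, I would invoke the Chebotarev density theorem applied to a suitable compositum of $\widetilde L_{0}$, $L$, $\Q(\mu_{\ell^{N}})$, and the auxiliary ray-class fields encoding the unwanted local defects, and produce a prime $q \notin \{p_{1},\dots,p_{m}\}$ with $q\equiv 1\pmod{\ell^{N}}$ and a prescribed Frobenius. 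Taking $M$ to be the unique degree-$\ell$ subextension of $\Q(\mu_{q})$ --- which is tamely and totally ramified only at $q$ --- the field $\widetilde L := \widetilde L_{0}\cdot M$ has ramification locus exactly $\{p_{1},\dots,p_{m},q\}$ and satisfies $(\mathfrak{S}_N)$: the Frobenius conditions make $\chi_{q}$ cancel the extraneous ramification of $\widetilde L_{0}$ and restore total ramification at each $p_{i}$, while $q$ itself is a single new totally ramified prime.

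The hard part I anticipate is the careful assembly of the Chebotarev conditions in the second step. One must package the finite list of local constraints --- total ramification at each $p_{i}$ and cancellation of the unwanted ramified primes of $\widetilde L_{0}$ --- into a single Frobenius condition in one finite Galois extension of $\Q$, and then show that the resulting set of admissible primes $q$ has positive density. A secondary subtlety arises in the edge case $|D_{p_{i}}| = \ell^{N}$ (possible only when $N = n$ and $G$ is cyclic of order $\ell^{n}$), where the local lift at $p_{i}$ in the first stage is forced to use the unramified direction of $\Q_{p_{i}}^{\times}$ and thereby introduces a non-Scholz contribution which $\chi_{q}$ must additionally absorb; this is precisely the role foreseen by the ``at most one additional prime'' clause in the statement.
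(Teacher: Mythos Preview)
Your first stage --- reducing to local obstructions and killing them using Scholz's condition --- is essentially the paper's Step~1 (Propositions~\ref{global lift existence criterion}--\ref{global embedding problem}) and is correct.

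The second stage, however, has a genuine gap. You propose to take a \emph{single} auxiliary prime $q$ and twist $\widetilde\varphi_{0}$ by the character $\chi_{q}$ cut out by the degree-$\ell$ subfield of $\Q(\mu_{q})$, claiming that ``the Frobenius conditions make $\chi_{q}$ cancel the extraneous ramification of $\widetilde L_{0}$.'' This cannot work: the character $\chi_{q}$ is unramified at every prime $p'\neq q$, so $(\widetilde\varphi_{0}\cdot\chi_{q})|_{I_{p'}}=\widetilde\varphi_{0}|_{I_{p'}}$ for all $p'\neq q$. No Chebotarev condition on $q$ changes this; Frobenius conditions control the \emph{values} $\chi_{q}(\sigma_{p'})$ but never make $\chi_{q}$ ramified at $p'$. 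Hence if $\widetilde L_{0}$ is ramified at several primes outside $\{p_{1},\dots,p_{m}\}$, a single $\chi_{q}$ removes none of them. What $\chi_{q}$ \emph{can} do is adjust the residue-degree obstruction $c_{p_{i}}$ at each $p_{i}$ (since that is a Frobenius, not an inertia, phenomenon) --- this is exactly the paper's Step~3 (Proposition~\ref{constructing tilde L satisfying SN}).

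The missing ingredient is the paper's Step~2 (Proposition~\ref{step 2 propn}): before introducing the new prime $q$, one first twists by a global class $f\in H^{1}(\Q_{S}/\Q,\Z/\ell\Z)$ whose restriction to inertia at each extraneous prime $p'$ matches $\widetilde\varphi_{0}|_{I_{p'}}^{-1}$, thereby making the modified solution ramified exactly at $\{p_{1},\dots,p_{m}\}$. The existence of such an $f$ is not a Chebotarev argument but a Selmer-group computation: Lemma~\ref{Selmer and dual Selmer vanishing} (via Wiles' formula and the triviality of the class group of $\Q$) shows the localization map $H^{1}(\Q_{S}/\Q,\Z/\ell\Z)\to\bigoplus_{p\in S} H^{1}(\Q_{p},\Z/\ell\Z)/H^{1}_{\mathrm{nr}}(\Q_{p},\Z/\ell\Z)$ is an isomorphism, so any tuple of prescribed local inertial characters lifts globally. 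Only after this is done does the single-prime twist by $\chi^{(q)}$ suffice to restore $(\mathfrak{S}_{N})$.
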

The proof of Theorem \ref{inverse-Gal-thm} is important for us to outline since the construction will be extended in subsequent sections in which our diophantine stability results are proven. More specifically, the additional ramified prime in $\widetilde{L}$ will be chosen to belong to an infinite set of primes defined by Chebotarev conditions. These conditions will be shown to be compatible with the Chebotarev conditions on primes that ensure Diophantine stability for a given elliptic curve. 

\subsubsection{Proof of Theorem \ref{inverse-Gal-thm} in the split case}\label{split-case-proof-subsubsection}
\par First, we consider the case when \eqref{extension of G} is split. Let $p_1, \dots, p_m$ be the rational primes which are ramified in $L$ and $q$ be a prime number. It is easy to see that $q$ splits completely in the field $L\left(\mu_{\ell^{N}}, \sqrt[\ell]{p_{1}}, \ldots, \sqrt[\ell]{p_{m}}\right)$ if and only if the following conditions are satisfied:
\begin{enumerate}
    \item[(i)] $q \equiv 1\pmod{\ell^{N}}$,
    \item[(ii)] $q$ splits completely in $L$,
    \item[(iii)] $q\notin \{p_1, \dots, p_m\}$ and $p_i$ is an $\ell$-th power in $\F_q^\times$ for $i=1, \dots, m$. 
\end{enumerate}
By the Chebotarev density theorem, there exists a positive density set of primes $\mathcal{P}_{L}$ that satisfy the above conditions. More specifically, this density is given by 
\begin{equation*}
    \delta(\mathcal{P}_L) : =\frac{1}{\left[ L\left(\mu_{\ell^{N}}, \sqrt[\ell]{p_{1}}, \ldots, \sqrt[\ell]{p_{m}}\right): \mathbb{Q}\right]}.
\end{equation*}

Let $q \in \mathcal{P}_L$ and $M_q \subset \Q(\mu_q)$ be the field such that $\op{Gal}(M_q/\Q)\simeq \Z/\ell \Z$. Set $\widetilde{L}:=L\cdot M_q$. Since $q$ is totally ramified in $M_q$ and split in $L$, it follows that $L \cap M_q = \Q$, therefore, $\op{Gal}(\widetilde{L}/\Q)\simeq G\times \Z/\ell \Z=\widetilde{G}$. It is easy to see that $\widetilde{L}$ satisfies $(\mathfrak{S}_N)$. In greater detail, each of the primes $p_i$ are of the form $p_i \equiv 1 \bmod \ell^N$ as $L$ satisfies ($\mathfrak{S}_N$). The prime $q$ is assumed to satisfy $q \equiv 1 \bmod \ell^N$. Thus, all primes that ramify in $\widetilde{L}$ are $\equiv 1 \bmod \ell^N$. Given a prime $p_i$ for $i = 1, \cdots , m$, let $\widetilde{v}$ be a prime of $\widetilde{L}$ that lies above $p_i$. Let $v$ (resp. $v'$) be a prime of $L$ (resp. $M_q$) such that $\widetilde{v}|v$ (resp. $\widetilde{v}|v'$), as depicted below:
 \[ \begin{tikzpicture}[scale=.8]
    \begin{scope}[xshift=0cm]
    \node (Q1) at (0,0) {$\Q$};
    \node (Q2) at (2,2) {$M_q$};
    \node (Q3) at (0,4) {$\widetilde{L}$};
    \node (Q4) at (-2,2) {$L$};

    \draw (Q1)--(Q2) node [pos=0.7, below,inner sep=0.25cm] {};
    \draw (Q1)--(Q4) node [pos=0.7, below,inner sep=0.25cm] {};
    \draw (Q3)--(Q4) node [pos=0.7, above,inner sep=0.25cm] {};
    \draw (Q2)--(Q3) node [pos=0.3, above,inner sep=0.25cm] {};
    \end{scope}

    \begin{scope}[xshift=10cm]
    \node (Q1) at (0,0) {$p_i$.};
    \node (Q2) at (2,2) {$v'$};
    \node (Q3) at (0,4) {$\widetilde{v}$};
    \node (Q4) at (-2,2) {$v$};

    \draw (Q1)--(Q2) node [pos=0.7, below,inner sep=0.25cm] {};
    \draw (Q1)--(Q4) node [pos=0.7, below,inner sep=0.25cm]{};
    \draw (Q3)--(Q4) node [pos=0.7, above,inner sep=0.25cm]{};
    \draw (Q2)--(Q3) node [pos=0.7, above,inner sep=0.25cm]{};
    \end{scope}
    \end{tikzpicture}
\]
Since $L$ satisfies $(\mathfrak{S}_N)$, we have that $f(v \mid p_i)= 1$. By condition (iii) in the choice of $\mathcal{P}_{L}$, $p_i$ splits completely in $M_q$. Therefore, we deduce that $f(\widetilde{v} \mid p_i) = 1$, i.e., $\widetilde{L}_{\widetilde{v}}/\Q_{p_i}$ is totally ramified. Since $q$ splits completely in $L$ and is totally ramified in $M_q$, $f(\widetilde{w} \mid q) = 1$ for any prime $\widetilde{w}$ of $\widetilde{L}$ that lies above $q$. Thus, $\widetilde{L}$ as defined above is a solution the embedding problem and satisfies $(\mathfrak{S}_N)$. The set of primes ramified in $\widetilde{L}$ is precisely $\{p_1, \cdots , p_m$, $q\}$ and therefore, $\widetilde{L}$ is ramified at exactly one additional prime.\\

\subsubsection{Proof of Theorem \ref{inverse-Gal-thm} in the non-split case}\label{non-split-case-proof-subsubsection}
\par Next we focus our attention on the case when \eqref{extension of G} is non-split, or equivalently, $\theta_{\widetilde{G}}\in H^2(G, \Z/\ell \Z)$ is nontrivial. In this case, the extension $\widetilde{L}/L/\Q$ is constructed in three steps.
\begin{description}
    \item[Step 1]  We construct an extension \( \widetilde{L} \) that solves the embedding problem for the desired group extension \( \widetilde{G} \).  
\item[Step 2] We modify \( \widetilde{L} \) so that the set of primes that ramify in $\widetilde{L}$ matches the set of ramified primes of \( L \).  
\item[Step 3] We adjust \( \widetilde{L} \) further to satisfy \( (\mathfrak{S}_N) \), allowing at most one additional ramified prime $q$.
\end{description}
We note that step 1 corresponds to Proposition~\ref{global embedding problem}, step 2 corresponds to Proposition~\ref{step 2 propn}, and step 3 corresponds to Proposition~\ref{constructing tilde L satisfying SN}. Recall that $L$ induces a surjective homomorphism $\varphi: \op{G}_{\Q}\twoheadrightarrow G$ and that there is a bijection between the central extensions $\widetilde{G}$ of $G$ by $\mathbb{Z}/\ell \Z$ and the elements of $H^{2}\left(G, \Z/\ell \Z\right)$. Let $\theta=\theta_{\widetilde{G}} \in H^{2}\left(G, \Z/\ell\Z\right)$ be the element corresponding to the extension $\widetilde{G}$. The map $\varphi$ induces a homomorphism

\begin{equation}\label{defn of phi star}
\varphi^{*}: H^{2}\left(G, \Z/\ell\Z\right) \longrightarrow H^{2}\left(\op{G}_{\Q}, \Z/\ell\Z\right).
\end{equation}

\begin{definition}
    Let \( \cG \), $\cG_1$ and $\cG_2$ be groups, and let \( f_1: \cG_1 \to \cG \) and \( f_2: \cG_2 \to \cG \) be surjective homomorphisms. The fibre product \( \cG_1 \times_\cG \cG_2 \) is the group  
\[
\cG_1 \times_\cG \cG_2 = \{ (g_1,g_2) \in \cG_1 \times \cG_2 \mid f_1(g_1) = f_2(g_2) \}
\]  
with projections \( \pi_1: \cG_1 \times_\cG \cG_2 \to \cG_1 \) and \( \pi_2: \cG_1 \times_\cG \cG_2 \to \cG_2 \). It satisfies the universal property: for any group \( H\) with homomorphisms \( q_1: H \to \cG_1 \) and \( q_2: H \to \cG_2 \) such that \( f_1 \circ q_1 = f_2\circ q_2 \), there exists a unique homomorphism \( q: H \to \cG_1 \times_\cG \cG_2 \) making the diagram below commute:
\[\begin{tikzcd}
{} & H
\arrow[bend right,swap]{ddl}{q_1}
\arrow[bend left]{ddr}{q_2}
\arrow[dashed]{d}[description]{q} & & \\
& \cG_1\times_\cG \cG_2 \arrow{dr}{\pi_2} \arrow{dl}[swap]{\pi_1} \\
\cG_1 \arrow[swap]{dr}{f_1} & & 
\cG_2 \arrow{dl}{f_2} \\
& \cG.
\end{tikzcd}
\]
\end{definition}

\begin{proposition}\label{global lift existence criterion}
    The embedding problem for \eqref{extension of G} is solvable if and only if \( \varphi^*(\theta) = 0 \), where $\varphi^*$ is given by \eqref{defn of phi star}.
\end{proposition}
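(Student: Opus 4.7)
The plan is to reinterpret $\varphi^*(\theta) \in H^2(\op{G}_\Q, \Z/\ell\Z)$ as the cohomology class of an explicit pullback extension, and then read off solvability of the embedding problem via the universal property of the fibre product. First, I would form the pullback extension
$$1 \to \Z/\ell\Z \to H \xrightarrow{\pi_1} \op{G}_\Q \to 1, \qquad H := \op{G}_\Q \times_G \widetilde{G},$$
with $\Z/\ell\Z \hookrightarrow H$ given by $a \mapsto (1, \iota(a))$. The extension-cohomology correspondence of the preceding proposition is natural in the base group, so the class of this pullback extension in $H^2(\op{G}_\Q, \Z/\ell\Z)$ is precisely $\varphi^*(\theta)$.

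Next, I would invoke the universal property of the fibre product to match lifts of $\varphi$ with group-theoretic sections of $\pi_1$. A homomorphism $\widetilde{\varphi}: \op{G}_\Q \to \widetilde{G}$ satisfying $\pi \circ \widetilde{\varphi} = \varphi$ is equivalent to a section $s: \op{G}_\Q \to H$ of $\pi_1$, via the assignments $s(g) = (g, \widetilde{\varphi}(g))$ and $\widetilde{\varphi} = \pi_2 \circ s$, where $\pi_2: H \to \widetilde{G}$ denotes the second projection. Such a section exists if and only if the central extension $H$ is split, which by the preceding proposition happens if and only if $\varphi^*(\theta) = 0$.

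Finally, I would verify that the lift can be chosen surjective, as required by the paper's definition of solvability. When $\theta \neq 0$, any lift $\widetilde{\varphi}$ is automatically surjective: setting $N := \widetilde{\varphi}(\op{G}_\Q)$ one has $\pi(N) = \varphi(\op{G}_\Q) = G$ and hence $N \cdot \iota(\Z/\ell\Z) = \widetilde{G}$; if $N$ were a proper subgroup, then $N \cap \iota(\Z/\ell\Z) = 0$ (since $\iota(\Z/\ell\Z)$ has prime order), and $\iota(\Z/\ell\Z)$ being central would force $\widetilde{G} \simeq N \times \iota(\Z/\ell\Z) \simeq G \times \Z/\ell\Z$, contradicting $\theta \neq 0$. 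When $\widetilde{G} \simeq G \times \Z/\ell\Z$ is split, $\varphi^*(\theta) = 0$ trivially, and one sets $\widetilde{\varphi} = (\varphi, \chi)$ for any surjective character $\chi: \op{G}_\Q \twoheadrightarrow \Z/\ell\Z$, which exists because $\Q$ admits $\Z/\ell\Z$-extensions (e.g.\ inside $\Q(\mu_p)$ for any prime $p \equiv 1 \pmod{\ell}$).

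I do not anticipate any substantive obstacle; the only real content beyond bookkeeping is the naturality of the extension-cohomology bijection under pullback along $\varphi$, which is a routine cocycle-level verification.
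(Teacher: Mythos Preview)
Your argument is correct and follows essentially the same route as the paper: form the fibre product $\op{G}_\Q \times_G \widetilde{G}$, identify its extension class as $\varphi^*(\theta)$, and convert sections of the projection into lifts of $\varphi$. Your treatment is in fact more complete, since you handle surjectivity in both the split and non-split cases, whereas the paper's sketch invokes the ambient non-split assumption of \S\ref{non-split-case-proof-subsubsection} and omits the converse direction.
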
  
\begin{proof}
This is a well known result, we give a sketch of the argument. Setting \(\mathfrak{G}:= \widetilde{G} \times_G \op{G}_{\Q},\) consider the fiber product diagram:
\[
\begin{array}{cccccccc}
1 & \to & \Z/\ell\Z & \to & \mathfrak{G} & \rightarrow & \op{G}_{\Q} & \to 1 \\
& & \| & & \downarrow & & \downarrow{\varphi} & \\
1 & \to & \Z/\ell \Z & \to & \widetilde{G} & \rightarrow & G & \to 1.
\end{array}
\]
Denote by $\pi_1:\mathfrak{G}\rightarrow \widetilde{G}$ and $\pi_2:\mathfrak{G}\rightarrow \op{G}_{\Q}$ the projection maps to the first and second factors. It is easy to see that the top row is a central extension corresponding to \( \varphi^*(\theta) \). This sequence splits if and only if \( \varphi^*(\theta) = 0 \). If the extension splits, there exists a section \( j: \op{G}_{\Q} \hookrightarrow \mathfrak{G}\) to the projection map $\pi_2: \mathfrak{G}\twoheadrightarrow \op{G}_{\Q}$. Then \( \widetilde{\varphi}:= \pi_1 \circ j\) is a lift of $\varphi$. Since it is assumed that \eqref{extension of G} is nonsplit, $\widetilde{\varphi}$ is surjective. The proof of the converse follows along similar lines, and is omitted.
\end{proof}
In what follows, for ease of notation, set $H^i(F, \cdot):=H^i(\op{G}_F, \cdot)$, where $F$ is a field of characteristic $0$. Given a prime number $p$, denote by
\[\op{res}^i_p: H^i(\Q, \Z/\ell\Z)\longrightarrow H^i(\Q_p, \Z/\ell\Z)\] the natural restriction map. This induces a map 
\[\alpha: H^2(\Q, \Z/\ell \Z)\xrightarrow{\bigoplus_p \op{res}_p^2} \bigoplus_{p} H^2(\Q_p, \Z/\ell \Z).\]

\begin{proposition}\label{ H2 injection }
    The map $\alpha$ is an injection.
\end{proposition}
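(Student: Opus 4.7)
The approach is to reduce the assertion to the Albert--Brauer--Hasse--Noether theorem on the injectivity of $\op{Br}(F) \hookrightarrow \bigoplus_w \op{Br}(F_w)$ for a number field $F$. Since $\Z/\ell\Z$ is not isomorphic to $\mu_\ell$ as a $\op{G}_\Q$-module when $\ell$ is odd, the plan is to first base change to the cyclotomic field $K := \Q(\mu_\ell)$, where such an identification becomes available.

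First, I would apply inflation--restriction to the extension $1 \to \op{G}_K \to \op{G}_\Q \to \Delta \to 1$, where $\Delta := \op{Gal}(K/\Q)$ has order $\ell - 1$ and is therefore coprime to $\ell$. Since $H^i(\Delta, M)$ vanishes for $i \geq 1$ whenever $M$ is $\ell$-torsion, the Hochschild--Serre spectral sequence collapses to give that inflation induces an injection $H^2(\Q, \Z/\ell\Z) \hookrightarrow H^2(K, \Z/\ell\Z)$. Over $K$, a choice of primitive $\ell$-th root of unity yields a $\op{G}_K$-equivariant isomorphism $\Z/\ell\Z \simeq \mu_\ell$, and the Kummer sequence identifies $H^2(K, \mu_\ell) \simeq \op{Br}(K)[\ell]$.

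Next, I would invoke Albert--Brauer--Hasse--Noether, which furnishes the fundamental exact sequence $0 \to \op{Br}(K) \to \bigoplus_w \op{Br}(K_w) \to \Q/\Z \to 0$, the sum being over all places of $K$. Restricting to $\ell$-torsion yields an injection $\op{Br}(K)[\ell] \hookrightarrow \bigoplus_w \op{Br}(K_w)[\ell]$. Combining these steps, any class $\eta \in H^2(\Q, \Z/\ell\Z)$ in $\ker \alpha$ inflates to a class in $H^2(K, \Z/\ell\Z) \simeq \op{Br}(K)[\ell]$ whose image in $H^2(K_w, \Z/\ell\Z)$ vanishes for every place $w$ of $K$ (using that for $w \mid p$, the completion $K_w$ contains $\Q_p$ and localization commutes with inflation). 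Brauer--Hasse--Noether then forces the inflation to vanish, and injectivity of inflation gives $\eta = 0$.

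The main point requiring care is the commutativity of the diagram comparing the localization map $\alpha$ to its $K$-analogue; this amounts to a standard functoriality check using the embeddings $\op{G}_{K_w} \hookrightarrow \op{G}_{\Q_p}$ of decomposition groups, and so is routine rather than deep. The remaining ingredients---Hochschild--Serre, Kummer theory, and Albert--Brauer--Hasse--Noether---are classical and can be quoted directly from \cite{NSW}. An alternative, equivalent route would be to apply the Poitou--Tate nine-term exact sequence, identifying $\ker \alpha = \Sh^2(\Q, \Z/\ell\Z)$ with the Pontryagin dual of $\Sh^1(\Q, \mu_\ell)$ and reducing to the Hasse principle for $\ell$-th powers in $\Q^\times$, which for odd $\ell$ holds without exception.
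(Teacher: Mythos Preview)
Your argument is correct and follows essentially the same route as the paper: pass to $K=\Q(\mu_\ell)$ using that $[K:\Q]$ is prime to $\ell$, identify $H^2(K,\Z/\ell\Z)\simeq \op{Br}(K)[\ell]$, invoke Brauer--Hasse--Noether over $K$, and conclude via the evident commutative square. One terminological slip worth fixing: the map $H^2(\Q,\Z/\ell\Z)\to H^2(K,\Z/\ell\Z)$ you use throughout is \emph{restriction}, not inflation.
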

\begin{proof}
    Let \( F \) be a field of characteristic zero that contains the \(\ell\)-th roots of unity, and let \(\op{Br}(F)\) denote its Brauer group. By the standard identification \(\op{Br}(F) \cong H^2(F, \overline{F}^\times)\). Letting \(\op{Br}(F)[\ell]\) be the \(\ell\)-torsion in the Brauer group, there is a natural isomorphism:
\[
H^2(F, \Z/\ell\Z) \cong \op{Br}(F)[\ell].
\]  

\noindent Now, let \( K = \Q(\mu_\ell) \) be the cyclotomic field generated by the \(\ell\)-th roots of unity, and let \( M_K \) denote the set of places of \( K \). By the Brauer–Hasse–Noether theorem, the natural localization map  
\[
\alpha_K: H^2(K, \Z/\ell\Z) \longrightarrow \bigoplus_{v \in M_K} H^2(K_v, \Z/\ell\Z)
\]  
is injective, meaning that an element of \( H^2(K, \Z/\ell\Z) \) is determined by its local invariants.  

Since \( K/\Q \) is a Galois extension with degree prime to \(\ell\), the inflation–restriction sequence for group cohomology shows that the restriction map  
\[
H^2(\Q, \Z/\ell\Z) \longrightarrow H^2(K, \Z/\ell\Z)
\]  
is also injective.  

 These maps fit naturally into the following commutative diagram:  
\[
\begin{array}{ccc}
H^2(\Q, \Z/\ell\Z) & \xrightarrow{\operatorname{res}} & H^2(K, \Z/\ell\Z) \\
\alpha \Big\downarrow & & \Big\downarrow \alpha_K \\
\bigoplus\limits_{v \in M_\Q} H^2(\Q_p, \Z/\ell\Z) & \xrightarrow{\operatorname{res}} & \bigoplus\limits_{v \in M_K} H^2(K_v, \Z/\ell\Z).
\end{array}
\]
We deduce from injectivity of the horizontal restriction maps and the injectivity of $\alpha_K$ that $\alpha$ is injective.
\end{proof}

Given a prime $p$, let \( G_p := \operatorname{Gal}(L_v/\mathbb{Q}_p) \subset G \), where \( v \) is a prime of \( L \) lying over \( p \). A different choice of $v$ gives rise to the same subgroup of $G$ up to conjugation by an element of $G$. Set $\theta_p$ to denote the image of $\theta$ with respect to the natural restriction map: 
\[
\op{res}_p: H^2(G, \mathbb{Z}/\ell\mathbb{Z}) \to H^2(G_p, \mathbb{Z}/\ell\mathbb{Z}).
\]
Consider the associated central extension  
\begin{equation}\label{local embedding problem}
0 \to \mathbb{Z}/\ell\mathbb{Z} \to \widetilde{G}_p \to G_p \to 1
\end{equation}
and let \( \varphi_p: \operatorname{G}_{\mathbb{Q}_p} \twoheadrightarrow G_p \) be the natural quotient map. The \emph{local embedding problem} then asks whether \( \varphi_p \) admits a lift to a homomorphism \( \widetilde{\varphi}_p: \operatorname{G}_{\mathbb{Q}_p} \rightarrow \widetilde{G}_p \). Here we do not insist that $\widetilde{\varphi}_p$ is surjective. By arguments similar to those in the proof of Proposition \ref{global lift existence criterion}, we find that the local embedding problem at $p$ has a solution if and only if $\varphi_p^*(\theta_p)=0$.

\begin{proposition}\label{local global propn}  
    With the notation as above, a surjective lift \(\widetilde{\varphi}: \op{G}_{\Q} \twoheadrightarrow \widetilde{G}\) exists if and only if the local embedding problem is solvable for all primes \( p \).  
\end{proposition}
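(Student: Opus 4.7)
The plan is to show the statement as a straightforward combination of Proposition~\ref{global lift existence criterion} (global solvability is controlled by the single class $\varphi^*(\theta) \in H^2(\Q,\Z/\ell\Z)$), Proposition~\ref{ H2 injection } (this class is detected locally), and the naturality of pullback in group cohomology. Throughout I stay in the non-split setting of subsubsection~\ref{non-split-case-proof-subsubsection}, so that surjectivity of any lift is automatic.

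First I would unpack the definitions. A choice of embedding $\overline{\Q}\hookrightarrow\overline{\Q}_p$ identifies $\op{G}_{\Q_p}$ with a decomposition subgroup of $\op{G}_\Q$, and $\varphi$ restricted to $\op{G}_{\Q_p}$ factors as $\iota_p\circ\varphi_p$, where $\iota_p:G_p\hookrightarrow G$ is the inclusion of the decomposition subgroup $G_p=\op{Gal}(L_v/\Q_p)$. By functoriality of pullback applied to the composition $\op{G}_{\Q_p}\xrightarrow{\varphi_p} G_p \xrightarrow{\iota_p} G$, the diagram
\[
\begin{array}{ccc}
H^2(G,\Z/\ell\Z) & \xrightarrow{\varphi^*} & H^2(\Q,\Z/\ell\Z) \\
\op{res}_p\Big\downarrow & & \Big\downarrow \op{res}_p^2 \\
H^2(G_p,\Z/\ell\Z) & \xrightarrow{\varphi_p^*} & H^2(\Q_p,\Z/\ell\Z)
\end{array}
\]
commutes. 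Evaluating on $\theta$ gives $\op{res}_p^2\bigl(\varphi^*(\theta)\bigr)=\varphi_p^*(\theta_p)$ for every prime $p$.

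Next I would apply the chain of equivalences. By Proposition~\ref{global lift existence criterion} (and the remark on non-split extensions), a surjective lift $\widetilde{\varphi}:\op{G}_\Q\twoheadrightarrow\widetilde{G}$ exists if and only if $\varphi^*(\theta)=0$ in $H^2(\Q,\Z/\ell\Z)$. By Proposition~\ref{ H2 injection }, the localization map $\alpha$ is injective, hence $\varphi^*(\theta)=0$ if and only if $\op{res}_p^2(\varphi^*(\theta))=0$ for every prime $p$. By the compatibility established above, this is equivalent to $\varphi_p^*(\theta_p)=0$ for all $p$. The local analogue of Proposition~\ref{global lift existence criterion}, proved by forming the fibre product $\widetilde{G}_p\times_{G_p}\op{G}_{\Q_p}$ and observing that its top row is a central extension of class $\varphi_p^*(\theta_p)$, shows that $\varphi_p$ admits a lift $\widetilde{\varphi}_p:\op{G}_{\Q_p}\to\widetilde{G}_p$ if and only if $\varphi_p^*(\theta_p)=0$. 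Chaining the equivalences yields the proposition.

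I do not expect a genuine obstacle here: the only non-formal input is the injectivity of $\alpha$, which is Proposition~\ref{ H2 injection } and was proved via Brauer--Hasse--Noether. The mild subtlety is that the subgroup $G_p\subset G$, and with it the map $\op{res}_p$ on $H^2$, depend on the choice of prime $v|p$ in $L$, but different choices differ by conjugation in $G$, and pullback along inner automorphisms acts trivially on $H^2$, so the vanishing statement $\varphi_p^*(\theta_p)=0$ is independent of this choice. The surjectivity of $\widetilde{\varphi}$, once a lift exists, is immediate from the non-split hypothesis on \eqref{extension of G}.
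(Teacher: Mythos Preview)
Your proposal is correct and follows essentially the same approach as the paper: invoke Proposition~\ref{global lift existence criterion}, identify $\alpha(\varphi^*(\theta))$ with the tuple $(\varphi_p^*(\theta_p))_p$ via naturality, apply the injectivity of $\alpha$ from Proposition~\ref{ H2 injection }, and conclude using the local analogue of the fibre-product criterion. The only difference is that you spell out the commutative diagram and the conjugation-independence of $G_p$ more explicitly than the paper does.
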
  

\begin{proof}  
By Proposition \ref{global lift existence criterion}, a surjective lift \(\widetilde{\varphi}\) exists if and only if \(\varphi^*(\theta) = 0\). The map $\alpha: H^2(\Q, \Z/\ell\Z)\rightarrow \bigoplus_{p\in M_{\Q}} H^2(\Q_p, \Z/\ell\Z)$ maps $\varphi^*(\theta)$ to the tuple $\alpha(\varphi^*(\theta))=\left(\varphi_p^*(\theta_p)\right)_p$. Proposition \ref{ H2 injection } shows that the map \(\alpha\) is injective, so it suffices to verify that \(\varphi_p^*(\theta_p) = 0\) for all primes \( p \). This is equivalent to the existence of a local lift \(\widetilde{\varphi}_p: \op{G}_{\Q_p} \to \widetilde{G}_p\) at each prime \( p \), which establishes the claim.  
\end{proof}

We now show that the local embedding problem is solvable. We then deduce from Proposition \ref{local global propn} that a surjective lift $\widetilde{\varphi}: \op{G}_{\Q}\twoheadrightarrow \widetilde{G}$ of $\varphi: \op{G}_{\Q}\rightarrow G$ does exist. Given a finite group $\cG$, recall that the Frattini subgroup $\Phi(\cG)$ is the intersection of all proper maximal subgroups of $\cG$. If $\cG$ is an $\ell$-group, then $\Phi(\cG)=\cG^p[\cG, \cG]$, and $\cG/\Phi(\cG)\simeq (\Z/\ell\Z)^d$ for some $d\geq 0$. It is well known that if $\cG$ is a finite $\ell$-group, then $\cG$ is cyclic if and only if the Frattini quotient $\cG/\Phi(\cG)\simeq \Z/\ell\Z$.

\begin{proposition}\label{local extension is solvable}
     Suppose \( L/\mathbb{Q} \) is a Galois extension with \(\op{Gal}(L/\Q)\simeq  G \) and canonical surjection \( \varphi : \op{G}_{\Q}\twoheadrightarrow G\) as before. Then, for every prime \( p \), the local embedding problem \eqref{local embedding problem} is solvable. 
\end{proposition}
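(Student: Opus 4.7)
The plan is to exploit the Scholz property to force the decomposition group $G_p$ to be cyclic, and then use local class field theory to produce the lift directly.

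First I would show that $G_p := \op{Gal}(L_v/\Q_p)$ is cyclic for every prime $p$. If $p$ is unramified in $L$, then $G_p$ is generated by Frobenius, hence cyclic. If $p$ is ramified in $L$, the Scholz hypothesis $(\mathfrak{S}_N)$ forces $L_v/\Q_p$ to be totally ramified, so $G_p$ coincides with the inertia group, and the congruence $p \equiv 1 \pmod{\ell^N}$ implies $p \neq \ell$. Consequently $L_v/\Q_p$ is tamely ramified, and since tame inertia is procyclic, $G_p$ is cyclic.

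Since $G_p$ is a cyclic $\ell$-group, any central extension $\widetilde{G}_p$ of $G_p$ by $\Z/\ell\Z$ is automatically abelian. Two cases arise. If $\theta_p = 0$, then $\widetilde{G}_p \simeq G_p \times \Z/\ell\Z$ and the lift $\widetilde{\varphi}_p := (\varphi_p, 0)$ solves the local embedding problem trivially. Otherwise $\widetilde{G}_p$ is cyclic of order $\ell \cdot |G_p|$; writing $G_p \simeq \Z/\ell^k\Z$ and $\widetilde{G}_p \simeq \Z/\ell^{k+1}\Z$, the desired map $\widetilde{\varphi}_p$ necessarily factors through the maximal abelian pro-$\ell$ quotient of $\op{G}_{\Q_p}$. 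By local class field theory, this quotient is isomorphic to the pro-$\ell$ completion of $\Q_p^\times$. For $p \neq \ell$ this is $\Z_\ell \oplus \Z/\ell^{N_p}\Z$, where the free summand comes from a uniformizer (the Frobenius direction) and the torsion summand from tame inertia, with $N_p := v_\ell(p-1) \geq N$; for $p = \ell$ it has two free $\Z_\ell$ summands. Using that $N$ has been chosen large enough so that $N_p \geq k+1$ for every decomposition group that appears, one explicitly builds $\widetilde{\varphi}_p$ by lifting the Frobenius and inertia components of $\varphi_p$ to $\Z/\ell^{k+1}\Z$; the requisite torsion constraint on the inertia component is solvable precisely because $N_p \geq k+1$.

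The principal obstacle is verifying the inequality $N_p \geq k+1$ uniformly across all relevant decomposition groups. This is automatic whenever $G_p$ is a proper subgroup of $G$, since then $k < n \leq N$ forces $N_p \geq N > k$. In the degenerate case $G_p = G$, one needs $N$ to be chosen strictly larger than $n$ at the outset; this stronger choice is implicit in the Reichardt--Scholz recursion, where a single $N$ is fixed at the start and governs the entire inductive tower.
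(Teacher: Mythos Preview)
Your argument is correct and follows essentially the same route as the paper: both prove that $G_p$ is cyclic (generated by Frobenius in the unramified case, equal to tame inertia in the ramified case), deduce that $\widetilde{G}_p$ is abelian and in fact cyclic when the extension is non-split, and then lift through the relevant quotient of $\op{G}_{\Q_p}$. The only packaging difference is that the paper treats the two cases separately---lifting through the unramified quotient $\widehat{\Z}$ when $p$ is unramified, and through $\op{Gal}(F/\Q_p)\simeq(\Z/\ell^N\Z)^2$ (the maximal tame abelian extension of exponent $\ell^N$) when $p$ is ramified---whereas you handle both at once via the pro-$\ell$ completion of $\Q_p^\times$ from local class field theory. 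One small imprecision: the torsion constraint $N_p\ge k+1$ is only needed at \emph{ramified} primes, since for unramified $p$ the inertia component of $\varphi_p$ vanishes and the lift through the free $\Z_\ell$ factor is unconstrained; so your final paragraph is slightly over-cautious. That said, your observation about the boundary case $k=n=N$ is legitimate---the paper's own argument (factoring through $(\Z/\ell^N\Z)^2$) has the same issue there---and as you correctly note, in the actual inductive application $N$ is fixed as the length of the full tower, hence strictly exceeds the exponent at every intermediate stage, so the edge case never arises.
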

\begin{proof}
   If the sequence \eqref{local embedding problem} splits, then $\theta_p=0$. Note that the local embedding problem is solvable if and only if $\varphi_p^*(\theta_p)=0$. Thus we assume without loss of generality that \eqref{local embedding problem} is non-split.  \par First suppose that \( p \) is unramified in \( L \). In this case, $G_p$ is cyclic, generated by the Frobenius element. Let $\Q_p^{\op{nr}}$ be the maximal unramified extension of $\Q_p$ and note that $ \op{Gal}(\Q_p^{\op{nr}}/\Q_p)$ is isomorphic to $\widehat{\Z}$. We seek a lift $\widetilde{\varphi}_p: \widehat{\mathbb{Z}} \rightarrow \widetilde{G}_p$ of $\varphi_p$. In order to show that $\widetilde{\varphi}_p$ exists, it suffices to prove that $\widetilde{G}_p$ is cyclic. Indeed, $G_p$ is cyclic, in particular, abelian. Hence $\Z/\ell\Z$ contains $[\widetilde{G}_p, \widetilde{G}_p]$. If $\Z/\ell\Z=[\widetilde{G}_p, \widetilde{G}_p]$, then, $\Z/\ell\Z$ is contained in $\Phi(\widetilde{G}_p)=[\widetilde{G}_p, \widetilde{G}_p]\widetilde{G}_p^\ell$. Therefore, there is a surjection $\widetilde{G}_p / \left(\Z/\ell\Z\right) \twoheadrightarrow \widetilde{G}_p / \Phi(\widetilde{G}_p)$. In particular, we find that $\widetilde{G}_p / \Phi(\widetilde{G}_p)$ must be cyclic. It follows that $\widetilde{G}_p$ must be cyclic as well. This contradicts the assumption that $[\widetilde{G}_p, \widetilde{G}_p]=\Z/\ell\Z$. Thus, $\widetilde{G}_p$ is abelian. Since $G_p$ is cyclic and the sequence \eqref{local embedding problem} is non-split, this forces $\widetilde{G}_p$ to be cyclic. Therefore, $\varphi_p: \widehat{\Z}\rightarrow G_p$ lifts to a map $\widetilde{\varphi}_p:\widehat{\Z}\rightarrow \widetilde{G}_p$.

\par Next, we consider the case in which $p$ is ramified. Note that $p\equiv 1\pmod{\ell^N}$ by the property $(\mathfrak{S}_N)$ and thus $p\neq \ell$ and the ramification of $p$ in $L$ is tame. Choose a prime $v$ of $L$ which lies above $p$. We have that \( (\cO_{L_{v}} / v)^\times = (\mathbb{Z} / p \mathbb{Z})^\times \). Choose a uniformizer $\pi$ of $L_v$ and consider the homomorphism $\lambda: G_p \to (\mathbb{Z} / p \mathbb{Z})^\times$ defined
by $\sigma \mapsto \overline{\left(\frac{\sigma \pi}{\pi}\right)}$. Since $\Z/p\Z$ is the residue field of $L_v$, it is easy to check that $\lambda$ is a homomorphism. The kernel of $\lambda$ is the wild inertia subgroup of $G_p$, which is trivial. Thus, \( G_p \) injects into \( (\mathbb{Z} / p \mathbb{Z})^\times \). It follows therefore that \( \varphi_p \) factors through a map $\op{Gal}(F/\Q_p)\rightarrow \op{G}_p$, where \( F \) is the maximal abelian tamely ramified extension of \( \mathbb{Q}_p \) with exponent dividing \( \ell^N \). More explicitly, \( F =F_1\cdot F_2\), where $F_1$ is the unramified extension of $\Q_p$ of degree $\ell^N$ and $F_2:=\mathbb{Q}_p(p^{1/\ell^N})$. Because $G_p$ is abelian, it follows (from the same argument as in the unramified case) that $\widetilde{G}_p$ is abelian. Note that \( \operatorname{Gal}(F / \mathbb{Q}_p) \simeq  (\mathbb{Z} / \ell^N \mathbb{Z}) \oplus (\mathbb{Z} / \ell^N \mathbb{Z})\). As $G_p$ is a quotient of $\op{Gal}(F/\Q_p)$, we find that $G_p\simeq \Z/\ell^k \Z$, or $G_p\simeq \Z/\ell^{k_1} \Z\oplus \Z/\ell^{k_2} \Z$ for some $k_1, k_2\geq 1$. Since $\widetilde{G}_p$ is abelian and fits into a non-split sequence \eqref{local embedding problem}, we find that $\widetilde{G}_p\simeq \Z/\ell^{k+1}\Z$ or $\widetilde{G}_p\simeq \Z/\ell^{k_1+1}\Z\oplus \Z/\ell^{k_2}\Z$ respectively. In both cases it is clear that $\varphi_p:\op{Gal}(F/\Q_p)\rightarrow G_p$ lifts to a homomorphism $\widetilde{\varphi}_p:\op{Gal}(F/\Q_p)\rightarrow \widetilde{G}_p$. This completes the proof.
\end{proof}

\begin{proposition}\label{global embedding problem}
    There is a surjective lift $\widetilde{\varphi}: \op{G}_\Q\twoheadrightarrow G$ of $\varphi:\op{G}_{\Q}\rightarrow G$.
\end{proposition}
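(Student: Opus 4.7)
The plan is to deduce the proposition directly by packaging together Propositions~\ref{local extension is solvable}, \ref{local global propn}, and \ref{global lift existence criterion}; all the cohomological work has already been done, and what remains is a formal synthesis together with the surjectivity upgrade coming from non-splitness. (I read the target of $\widetilde{\varphi}$ as $\widetilde{G}$, the stated $G$ being a typo.) First I would invoke Proposition~\ref{local extension is solvable} to conclude that the local obstruction $\varphi_p^*(\theta_p) \in H^2(\Q_p, \Z/\ell\Z)$ vanishes at every prime $p$, since the local embedding problem at each $p$ admits a solution (unramified $p$ via cyclicity of $\widetilde{G}_p$, tamely ramified $p$ via the explicit structure of the tame quotient of $\op{G}_{\Q_p}$ and the Scholz condition $p \equiv 1 \pmod{\ell^N}$).

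Next I would apply Proposition~\ref{local global propn}, which rests on the injectivity of $\alpha$ established in Proposition~\ref{ H2 injection }. The vanishing of $\varphi_p^*(\theta_p)$ for all $p$ is equivalent, via $\alpha$, to the global equality $\varphi^*(\theta) = 0$, and hence by Proposition~\ref{global lift existence criterion} to the existence of a (not yet necessarily surjective) lift $\widetilde{\varphi}: \op{G}_\Q \to \widetilde{G}$ of $\varphi$.

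It remains only to promote $\widetilde{\varphi}$ to a surjection. Since we are in the non-split case of \eqref{extension of G}, let $H := \operatorname{im}(\widetilde{\varphi}) \subset \widetilde{G}$. Because $\pi \circ \widetilde{\varphi} = \varphi$ is surjective, $\pi(H) = G$, so $H \cdot \iota(\Z/\ell\Z) = \widetilde{G}$. If $H$ were a proper subgroup, then $H \cap \iota(\Z/\ell\Z)$ would be trivial (as $\iota(\Z/\ell\Z)$ has prime order $\ell$), and $H$ would furnish a complement to $\iota(\Z/\ell\Z)$, forcing \eqref{extension of G} to split. This contradicts the non-split assumption, so $H = \widetilde{G}$ and $\widetilde{\varphi}$ is surjective.

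There is no substantial obstacle at this step: the genuine cohomological content was already absorbed into Proposition~\ref{local extension is solvable} (local solvability case-by-case) and Proposition~\ref{ H2 injection } (the Brauer--Hasse--Noether-style local-global principle for $H^2(\Q, \Z/\ell\Z)$). The present proposition is the clean statement one obtains by combining these two inputs with the elementary observation that any lift of a non-split central $\Z/\ell\Z$-extension is automatically surjective.
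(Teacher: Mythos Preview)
Your proposal is correct and follows essentially the same approach as the paper: invoke Proposition~\ref{local extension is solvable} for local solvability, then Proposition~\ref{local global propn} to obtain a global lift, and finally use non-splitness of \eqref{extension of G} to conclude surjectivity. The paper's proof is terser (it simply asserts that non-splitness forces surjectivity), whereas you spell out the complement argument explicitly, but the logic is identical.
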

\begin{proof}
    Proposition \ref{local extension is solvable} implies that for each prime $p$, the local embedding problem is solvable, i.e., there is a lift $\widetilde{\varphi}_p$ of $\varphi_p$. Thus it follows from Proposition \ref{local global propn} that a lift $\widetilde{\varphi}:\op{G}_\Q\rightarrow \widetilde{G}$ of $\varphi: \op{G}_\Q\rightarrow G$ exists. Since the extension 
    \[1\rightarrow \Z/\ell\Z\rightarrow \widetilde{G}\rightarrow G\rightarrow 1\] is non-split, $\widetilde{\varphi}$ is surjective.
\end{proof}

This completes Step 1. We now move on to Step 2 which involves modifying $\widetilde{L}$ so that the set of primes that ramify in $\widetilde{L}$ matches the set of ramified primes in $L$.\\

\par The action of \(\op{G}_{\Q}\) on \(\Z/\ell\Z\) is trivial, allowing us to identify \(H^1(\op{G}_{\Q}, \Z/\ell\Z)\) with \(\op{Hom}(\op{G}_{\Q}, \Z/\ell\Z)\). Let \(\widetilde{L}\) and \(\widetilde{L}'\) be extensions of \(L\) that solve the embedding problem, and let \(\widetilde{\varphi}, \widetilde{\psi}: \op{G}_{\Q} \to \widetilde{G}\) be the corresponding lifts of \(\varphi: \op{G}_{\Q} \to G\) respectively. Then for $\sigma\in \op{G}_\Q$, we have that \[\widetilde{\psi}(\sigma)\widetilde{\varphi}(\sigma)^{-1}\in \Z/\ell\Z=\op{ker}(\widetilde{G}\rightarrow G).\] The map $f:=\widetilde{\psi}\widetilde{\varphi}^{-1}:\op{G}_\Q\rightarrow \Z/\ell\Z$ is a homomorphism. In greater detail, 
\[\begin{split}
f(\sigma_1\sigma_2)=&\widetilde{\psi}(\sigma_1\sigma_2)\widetilde{\varphi}(\sigma_1\sigma_2)^{-1} \\ = & \widetilde{\psi}(\sigma_1)\widetilde{\psi}(\sigma_2)\widetilde{\varphi}(\sigma_2)^{-1}\widetilde{\varphi}(\sigma_1)^{-1}\\
=& \widetilde{\psi}(\sigma_1) \widetilde{\varphi}(\sigma_1)^{-1}\widetilde{\psi}(\sigma_2)\widetilde{\varphi}(\sigma_2)^{-1}=f(\sigma_1)f(\sigma_2)\\
\end{split}\]
wherein the second last equality follows since $\widetilde{\psi}(\sigma_2)\widetilde{\varphi}(\sigma_2)^{-1}$ belongs to the center of $G$. It is clear that $f(1)=1$ and $f(\sigma^{-1})=f(\sigma)^{-1}$. Conversely, suppose that $f\in H^1(\op{G}_\Q, \Z/\ell\Z)$ and $\widetilde{L}/L/\Q$ solves the embedding problem, with $\widetilde{\varphi}:\op{G}_\Q\rightarrow \widetilde{G}$ the corresponding homomorphism. Then, $\widetilde{\psi}:=\widetilde{\varphi} f:\op{G}_\Q\rightarrow \widetilde{G}$ is a well defined surjective homomorphism that lifts $\varphi$. This also gives rise to a solution $\widetilde{L}'/L/K$ of the embedding problem.

\begin{definition}\label{defn of twist} With respect to above notation, we refer to $\widetilde{L}'$ as the \emph{twist} of $\widetilde{L}$ by $f$, and denote it by $\widetilde{L}^f$.
\end{definition}
By Proposition \ref{global embedding problem} a solution $\widetilde{L}/L/\Q$ of the embedding problem always exists. Hence there is a bijection:
\[\left\{\widetilde{L}^f/L/\Q : \widetilde{L}^f\text{ solves the embedding problem} \right\}\leftrightarrow H^1(\op{G}_{\Q}, \Z/\ell\Z). \]
\par Let $C$ be a finite dimensional vector space over $\Z/\ell \Z$ equipped with an action of $\op{G}_{\Q}$. Set $C^*:=\op{Hom}\left(C, \mu_\ell\right)$ with the induced Galois module structure. Given a vector space $V$ over $\Z/\ell \Z$, denote by $V^\vee:=\op{Hom}(V, \Z/\ell\Z)$. A prime $p$ is said to be ramified in $C$ if the inertia group $\op{I}_p\subset \op{G}_{\Q_p}$ acts non-trivially on $C$. The field $\Q(C)$ is the Galois extension of $\Q$ \emph{cut out by $C$}. In greater detail, $\Q(C):=\overline{\Q}^{\op{ker}\rho_C}$ where $\rho_C: \op{G}_{\Q}\rightarrow \op{Aut}(C)$ is the representation of $\op{G}_{\Q}$ on $C$. Note that a prime $p$ is ramified in $C$ if and only if it ramifies in $\Q(C)$. Let $\Sigma_C$ be the set of primes $p$ such that either $p\neq \ell$ and $p$ is ramified in $C$, or $p=\ell$. Fix a finite set of primes $S$ containing $\Sigma_C$ and let $\Q_S$ be the maximal algebraic extension of $\Q$ in which all primes $p\notin S$ are unramified. Given a number field $F\subset \Q_S$, we set $H^i(\Q_S/F, \cdot):=H^i(\op{Gal}(\Q_S/F), \cdot)$ where $i=1,2$. Moreover, we set $H^i(\Q_p, \cdot):=H^i(\op{G}_{\Q_p}, \cdot)$. We define:
\[\Sh^i_S(C):=\op{ker}\left( H^i(\Q_S/\Q, C)\longrightarrow \bigoplus_{p\in S} H^i(\Q_p, C)\right).\] Global duality for $\Sh$-groups \cite[Theorem 8.6.7]{NSW} states that there is a natural isomorphism:
\begin{equation}\label{PT duality}\Sh^2_S(C)\simeq \Sh^1_S(C^*)^\vee.\end{equation}
For each prime $p\in S$ let $\cL_p$ be a subspace of $H^1(\Q_p, C)$ and let $\cL_p^\perp$ be the orthogonal complement of $\cL_p$ with respect to the nondegenerate Tate local duality pairing:
\[\langle \cdot, \cdot \rangle_p : H^1(\Q_p, C)\times H^1(\Q_p, C^*)\xrightarrow{\cup} H^1(\Q_p, \mu_\ell)\xrightarrow{\sim}\Z/\ell \Z. \]
In other words, 
\[\cL_p^\perp:=\{v\in H^1(\Q_p, C^*)\mid \langle w, v\rangle_p=0\text{ for all }w\in \cL_p\}.\]
\begin{definition}\label{defn of Selmer and dual Selmer}
    With respect to notation above, the Selmer and dual Selmer groups, denoted $H^1_{\cL}(\Q_S/\Q, C)$ and $H^1_{\cL^\perp}(\Q_S/\Q, C^*)$ are defined as follows:
    \[\begin{split}
        & H^1_{\cL}(\Q_S/\Q, C):= \op{ker}\left\{ H^1(\Q_S/\Q, C)\longrightarrow \bigoplus_{p\in S} \frac{H^1(\Q_p, C)}{\cL_p}\right\},\\
        & H^1_{\cL^\perp}(\Q_S/\Q, C^*):= \op{ker}\left\{ H^1(\Q_S/\Q, C^*)\longrightarrow \bigoplus_{p\in S} \frac{H^1(\Q_p, C^*)}{\cL_p^\perp}\right\}.\\
    \end{split}\]
\end{definition}
A well known formula due to Wiles implies that 
\begin{equation}\label{Wiles formula}\begin{split}\dim H^1_{\cL}(\Q_S/\Q, C)-\dim H^1_{\cL^\perp}(\Q_S/\Q, C^*)&= \dim H^0(\Q, C)-\dim H^0(\Q, C^*) \\ 
+& \sum_{p\in S\cup\{\infty\}} \left(\dim \cL_p- \dim H^0(\Q_p, C)\right),
\end{split}\end{equation}
where it is understood that $\cL_\infty=0$. A variation of the Poitou--Tate sequence gives the following:
\begin{equation}\label{PT les}
    \begin{split}
        0 &\rightarrow  H^1_{\cL}(\Q_S/\Q, C)\rightarrow H^1(\Q_{S}/\Q, C)\rightarrow \bigoplus_{p\in S} \left(\frac{H^1(\Q_p, C)}{\cL_p}\right) \\
        & \rightarrow H^1_{\cL^\perp}(\Q_S/\Q, C^*)^\vee\rightarrow H^2(\Q_S/\Q, M)\rightarrow \bigoplus_{p\in S} H^2(\Q_p, C),
    \end{split}
\end{equation}
see for instance, \cite[p.~555, l.7]{tayloricosahedral}. We set $I_p$ to be the inertia subgroup of $\op{G}_{\Q_p}$ and given a $\op{G}_{\Q_p}$-module $M$, set:
\[H^1_{\op{nr}}(\Q_p, M):=\op{image}\{H^1(\op{G}_{\Q_p}/\op{I}_p, M^{\op{I}_p})\xrightarrow{\op{inf}} H^1(\Q_p, M)\}.\]
Consider the Selmer condition $\cL^{\op{nr}}$ where $\cL_p^{\op{nr}}:=H^1_{\op{nr}}(\Q_p, C)$ for all $p\in S$. We note that \begin{equation}\label{unram local dimension}\dim \cL_p^{\op{nr}}=\dim H^1(\widehat{\Z}, C^{\op{I}_p})=\dim H^0(\widehat{\Z}, C^{\op{I}_p})=\dim H^0(\Q_p, C),\end{equation}where we identify $\op{G}_{\Q_p}/\op{I}_p$ with $\widehat{\Z}$ (see also \cite[Lemma 3]{raviFM}). 
\begin{lemma}\label{Selmer and dual Selmer vanishing}
    With respect to notation above, suppose that the action of $\op{G}_{\Q}$ on $C$ is trivial. Then the following assertions hold:
    \begin{enumerate}
        \item $\dim H^1_{\cL^{\op{nr}}}(\Q_S/\Q, C)=\dim H^1_{\cL^{\op{nr}\perp}}(\Q_S/\Q, C^*)=0$.
        \item The map 
        \begin{equation}\label{Selmer map iso} H^1(\Q_S/\Q, C)\longrightarrow \bigoplus_{p\in S} \frac{H^1(\Q_p, C)}{H^1_{\op{nr}}(\Q_p, C)}\end{equation} is an isomorphism. 
    \end{enumerate}
\end{lemma}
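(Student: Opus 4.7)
The plan is to apply Wiles' formula \eqref{Wiles formula} to the Selmer condition $\cL^{\op{nr}}$, which will yield an equality between the $\F_\ell$-dimensions of the Selmer and dual Selmer groups, and then to exhibit the vanishing of the Selmer group directly. Part (2) is then immediate from the Poitou–Tate sequence \eqref{PT les} once both the Selmer and dual Selmer groups are known to vanish.

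For the first part, the input for Wiles' formula is computed as follows. Since $\op{G}_{\Q}$ acts trivially on $C \simeq (\Z/\ell\Z)^d$, one has $H^0(\Q, C) = C$, hence $\dim H^0(\Q, C) = d$, and likewise $H^0(\Q_p, C) = C$ for every place $p$. Because $\ell$ is odd, $\mu_\ell \not\subset \Q$; since $C^* = \op{Hom}(C, \mu_\ell) \simeq \mu_\ell^d$ as $\op{G}_\Q$-modules, this gives $H^0(\Q, C^*) = 0$. By \eqref{unram local dimension}, $\dim \cL_p^{\op{nr}} = \dim H^0(\Q_p, C) = d$ for every $p \in S$, so each local summand in Wiles' formula contributes $0$. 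Finally $\cL_\infty = 0$ while $\dim H^0(\Q_\infty, C) = d$. Substituting,
\[
\dim H^1_{\cL^{\op{nr}}}(\Q_S/\Q, C) - \dim H^1_{\cL^{\op{nr}\perp}}(\Q_S/\Q, C^*) = d - 0 + \sum_{p \in S}(d - d) + (0 - d) = 0.
\]

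Next, I would show directly that $H^1_{\cL^{\op{nr}}}(\Q_S/\Q, C) = 0$. Since the action of $\op{G}_\Q$ on $C$ is trivial, $H^1(\Q_S/\Q, C) = \op{Hom}(\op{Gal}(\Q_S/\Q), C)$. A class lying in the Selmer group corresponds to a homomorphism unramified at every $p \in S$; being automatically unramified outside $S$, such a homomorphism factors through the Galois group of the maximal everywhere unramified abelian $\ell$-extension of $\Q$. This quotient is trivial by Minkowski's theorem, so every such homomorphism is zero. Combined with the dimension equality above, this forces $H^1_{\cL^{\op{nr}\perp}}(\Q_S/\Q, C^*) = 0$ as well, which establishes (1).

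For (2), the map in \eqref{Selmer map iso} fits into the exact sequence \eqref{PT les}: its kernel is precisely $H^1_{\cL^{\op{nr}}}(\Q_S/\Q, C)$, which vanishes by (1), giving injectivity, and its cokernel injects into $H^1_{\cL^{\op{nr}\perp}}(\Q_S/\Q, C^*)^\vee$, which is also $0$ by (1), giving surjectivity. The main (minor) subtlety to verify carefully is the local dimension computation at primes $p \in S$, where $p$ may ramify in $C$ or equal $\ell$; however, since the Galois action on $C$ is trivial, the inertia subgroup acts trivially on $C$ regardless of $p$, and \eqref{unram local dimension} applies uniformly. Everything else is bookkeeping inside the Poitou–Tate framework already recorded in the excerpt.
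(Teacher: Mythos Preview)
Your proof is correct and follows essentially the same approach as the paper: direct vanishing of the Selmer group via Minkowski's theorem (no nontrivial unramified abelian extensions of $\Q$), Wiles' formula to equate the Selmer and dual Selmer dimensions, and then the Poitou--Tate sequence \eqref{PT les} for part (2). The only cosmetic difference is that the paper establishes the vanishing first and then applies Wiles' formula, whereas you compute the formula first and then verify the vanishing.
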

\begin{proof}
    Since the Galois action on $C$ is trivial, $H^1(\Q_S/\Q, C)$ consists of homomorphisms $f: \op{G}_{\Q}\rightarrow C$ which are unramified away from $S$, and $H^1_{\cL^{\op{nr}}}(\Q_S/\Q, C)$ is the subset of homomorphisms which are unramified at all primes. Such unramified homomorphisms cut out unramified abelian extensions of $\Q$. Since there are no proper abelian unramified extensions of $\Q$, it follows that $H^1_{\cL^{\op{nr}}}(\Q_S/\Q, C)=0$. Thus, $\dim H^1_{\cL^{\op{nr}}}(\Q_S/\Q, C)=0$. Recall from \eqref{unram local dimension} that for $p\in S$, 
    \[\dim \cL_p^{\op{nr}}-\dim H^0(\Q_p, C)=0.\]
    We have that 
    \[\dim \cL_\infty^{\op{nr}}-\dim H^0(\Q_\infty, C)=-\dim C.\]
    Note that since the action on $C$ is the trivial one, 
    \[\dim H^0(\Q, C)=\dim C\text{ and } \dim H^0(\Q, C^*)=0. \]
    Thus, from \eqref{Wiles formula}, 
    \[\dim H^1_{\cL^{\op{nr}}}(\Q_S/\Q, C)-\dim H^1_{\cL^{\op{nr},\perp}}(\Q_S/\Q, C^*)=\dim C-0+\sum_{p\in S} 0 -\dim C=0.\] Thus, we deduce that 
    \[\dim H^1_{\cL^{\op{nr}\perp}}(\Q_S/\Q, C^*)=\dim H^1_{\cL^{\op{nr}}}(\Q_S/\Q, C)=0.\] This completes the proof of (1). Part (2) then follows from (1) and the exact sequence \eqref{PT les}.
\end{proof}

\begin{proposition}\label{step 2 propn}
    With respect to notation above, there exists $\widetilde{L}/L/\Q$ solving the embedding problem such that $\widetilde{L}$ is ramified at the same set of primes as $L$.
\end{proposition}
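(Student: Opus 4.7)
The plan is to start with any solution $\widetilde{L}_0/L/\Q$ of the embedding problem, guaranteed by Proposition \ref{global embedding problem}, with associated surjective lift $\widetilde{\varphi}_0:\op{G}_\Q\twoheadrightarrow \widetilde{G}$, and then to modify it by an appropriate twist $f\in H^1(\op{G}_\Q,\Z/\ell\Z)$ as in Definition \ref{defn of twist} so as to kill the extraneous ramification. Let $\Sigma_0$ be the set of primes that ramify in $L/\Q$, let $\Sigma_1$ be the set of primes ramifying in $\widetilde{L}_0$ but unramified in $L$, and set $S:=\Sigma_0\cup \Sigma_1\cup\{\ell\}$, which is finite. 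For each $p\in \Sigma_1\cup\{\ell\}\setminus \Sigma_0$, the composition $\varphi=\pi\circ \widetilde{\varphi}_0$ is unramified at $p$, so $\widetilde{\varphi}_0(I_p)\subseteq \op{ker}\pi=\Z/\ell\Z$; the restriction $\widetilde{\varphi}_0|_{I_p}$ therefore defines a homomorphism $c_p:I_p\to \Z/\ell\Z$.

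Since $\Z/\ell\Z$ is central in $\widetilde{G}$, the twist $\widetilde{\psi}:=\widetilde{\varphi}_0\cdot f$ is unramified at $p\in \Sigma_1\cup\{\ell\}\setminus \Sigma_0$ if and only if $f|_{I_p}=-c_p$ as homomorphisms $I_p\to \Z/\ell\Z$. Because the kernel of restriction $H^1(\Q_p,\Z/\ell\Z)\to H^1(I_p,\Z/\ell\Z)$ is precisely $H^1_{\op{nr}}(\Q_p,\Z/\ell\Z)$, this unramification condition is equivalent to the equality of the images of $f$ and $-\widetilde{\varphi}_0|_{\op{G}_{\Q_p}}$ in the quotient $H^1(\Q_p,\Z/\ell\Z)/H^1_{\op{nr}}(\Q_p,\Z/\ell\Z)$.

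The key input is Lemma \ref{Selmer and dual Selmer vanishing}(2) applied to $C=\Z/\ell\Z$ with trivial Galois action, which yields the isomorphism
\[
H^1(\Q_S/\Q,\Z/\ell\Z)\xrightarrow{\sim}\bigoplus_{p\in S}\frac{H^1(\Q_p,\Z/\ell\Z)}{H^1_{\op{nr}}(\Q_p,\Z/\ell\Z)}.
\]
Prescribe the target tuple to be the class of $-\widetilde{\varphi}_0|_{\op{G}_{\Q_p}}$ for each $p\in (\Sigma_1\cup\{\ell\})\setminus \Sigma_0$ and zero at the remaining primes of $S$; the isomorphism produces a unique $f\in H^1(\Q_S/\Q,\Z/\ell\Z)$ realizing these local conditions.

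Setting $\widetilde{L}:=\widetilde{L}_0^f$, the lift $\widetilde{\psi}=\widetilde{\varphi}_0\cdot f$ is automatically surjective since \eqref{extension of G} is non-split, so $\widetilde{L}/L/\Q$ solves the embedding problem. By construction $\widetilde{\psi}$ is unramified at every prime in $S\setminus \Sigma_0$, and unramified at each prime $p\notin S$ because both $\widetilde{\varphi}_0$ and $f$ are. Hence $\widetilde{L}$ is unramified outside $\Sigma_0$; together with the automatic ramification of $\widetilde{L}$ at every prime of $\Sigma_0$ coming from $L\subseteq \widetilde{L}$, this gives the desired equality of ramification sets. The crux of the argument is Lemma \ref{Selmer and dual Selmer vanishing}(2), which itself rests on the classical fact that $\Q$ admits no nontrivial everywhere-unramified abelian $\ell$-extension; the rest is the essentially formal translation between local ramification behavior of $\widetilde{\psi}$ and local cohomology classes modulo the unramified subspace.
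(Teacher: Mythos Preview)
Your proof is correct and follows essentially the same route as the paper: start from the solution supplied by Proposition \ref{global embedding problem}, then twist by a global class $f\in H^1(\Q_S/\Q,\Z/\ell\Z)$ produced via the isomorphism of Lemma \ref{Selmer and dual Selmer vanishing}(2) so as to kill ramification outside the primes already ramified in $L$. One small notational point: writing ``$-\widetilde{\varphi}_0|_{\op{G}_{\Q_p}}$'' for the prescribed local class is slightly abusive since $\widetilde{\varphi}_0|_{\op{G}_{\Q_p}}$ is $\widetilde{G}$-valued, not $\Z/\ell\Z$-valued; what you mean (and what the paper makes explicit via the unramified local lift from Proposition \ref{local extension is solvable}) is the unique class in $H^1(\Q_p,\Z/\ell\Z)/H^1_{\op{nr}}(\Q_p,\Z/\ell\Z)$ whose restriction to $I_p$ equals $-c_p$.
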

\begin{proof}
    It follows from Proposition \ref{global embedding problem} that there exists $\widetilde{L}/L/\Q$ solving the embedding problem. Let $\varphi:\op{G}_\Q\rightarrow G$ and $\widetilde{\varphi}:\op{G}_\Q\rightarrow \widetilde{G}$ be the homomorphisms corresponding to $L$ and $\widetilde{L}$ respectively. Let $\Sigma$ (resp. $S$) be the set of primes which ramify in $L$ (resp. $\widetilde{L}$). We modify $\widetilde{\varphi}$ by a class $f\in H^1(\op{G}_{\Q}, \Z/\ell \Z)$ such that $\widetilde{\psi}:=\widetilde{\varphi}f$ is unramified at all primes $p\notin \Sigma$. For each prime $p\in S$ we can choose a lift $\widetilde{\psi}_p:\op{G}_{\Q_p}\rightarrow \widetilde{G}_p$ of $\varphi_p$ such that for $p\in S\setminus \Sigma$, $\widetilde{\psi}_p$ is unramified. Such a lift can be constructed as in the proof of Proposition \ref{local extension is solvable}. For $p\in S\setminus \Sigma$, let $f_p\in H^1(\Q_{p}, \Z/\ell\Z)$ be such that $\widetilde{\psi}_p=\widetilde{\varphi}_p f_p$. On the other hand, for $p\in \Sigma$, set $f_p:=0$. Consider the tuple of elements 
    \[(f_p)_{p\in S}\in \left(\bigoplus_{p\in S} \frac{H^1(\Q_p, \Z/\ell\Z)}{H_{\op{nr}}^1(\Q_p, \Z/\ell\Z)}\right).\] By part 2 of Lemma \ref{Selmer and dual Selmer vanishing}, there exists $f\in H^1(\Q_S/\Q, \Z/\ell\Z)$ which restricts to $(f_p)_{p\in S}$. Note that by construction, $f_{|\op{I}_p}=f_{p|\op{I}_p}$ for all $p\in S$. It follows that $\widetilde{\psi}:=\widetilde{\varphi} f$ is unramified at all primes $p\in S\setminus \Sigma$. Let $\widetilde{L}'$ be the extension corresponding to $\widetilde{\psi}$, then $\widetilde{L}'/L/\Q$ is a solution to the embedding problem which is unramified away from $\Sigma$. 
\end{proof}

Let $F$ be the $\Z/\ell^{N-1}\Z$-extension of $\Q$ which is contained in $\Q(\mu_{\ell^N})$. Note that $\Q(\mu_{\ell^N})=F\cdot \Q(\mu_\ell)$. 
\begin{lemma}\label{linear disj lemma}
The following assertions hold:
\begin{enumerate}
    \item[(i)] The fields $F$ and $L$ are linearly disjoint,
    \item[(ii)] $FL$ and $\Q(\mu_\ell, \sqrt[\ell]{p_1}, \dots, \sqrt[\ell]{p_m})$ are linearly disjoint. 
\end{enumerate}
\end{lemma}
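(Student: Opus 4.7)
For part (i), I would argue by ramification. The field $F\subset\Q(\mu_{\ell^N})$ is ramified only at $\ell$, whereas by the Scholz property $(\mathfrak{S}_N)$ every rational prime ramified in $L$ is $\equiv 1\pmod{\ell^N}$ and hence different from $\ell$. Consequently $F\cap L$ is a number field whose set of ramified primes is empty (being contained in the intersection of the ramification loci of $F$ and $L$), so $F\cap L/\Q$ is everywhere unramified. Minkowski's theorem then forces $F\cap L=\Q$, and since $F/\Q$ is Galois, the linear disjointness of $F$ and $L$ follows.

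For part (ii), the plan is to reduce linear disjointness to the structural assertion that $K$ contains no non-trivial Galois $\ell$-extension of $\Q$. Granted this, part (i) implies that $FL/\Q$ is a Galois $\ell$-extension of degree $\ell^{N-1+n}$, whence $FL\cap K$ is a Galois $\ell$-extension of $\Q$ sitting inside $K$, and must therefore equal $\Q$. Galois theory then yields linear disjointness of $FL$ and $K$ over $\Q$.

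To prove the structural assertion, I would compute the abelianization of $\cG:=\op{Gal}(K/\Q)$. Kummer theory identifies $V:=\op{Gal}(K/\Q(\mu_\ell))$ with an elementary abelian $\ell$-group, while $\op{Gal}(\Q(\mu_\ell)/\Q)\cong(\Z/\ell\Z)^\times$ has order $\ell-1$, coprime to $\ell$. Schur--Zassenhaus then gives a semidirect decomposition $\cG\cong V\rtimes(\Z/\ell\Z)^\times$ in which $(\Z/\ell\Z)^\times$ acts on $V$ through the mod-$\ell$ cyclotomic character, i.e.\ by scalar multiplication on each Kummer generator $\tau_i$ satisfying $\tau_i(\sqrt[\ell]{p_j})=\zeta_\ell^{\delta_{ij}}\sqrt[\ell]{p_j}$. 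A direct commutator computation $[\sigma_a,\tau_i]=\tau_i^{a-1}$ for any $a\not\equiv 1\pmod\ell$ (such $a$ exists because $\ell\geq 5$) yields $V\subseteq[\cG,\cG]$, and the reverse inclusion is automatic since $\cG/V$ is abelian. Hence $\cG^{\op{ab}}\cong(\Z/\ell\Z)^\times$ has order coprime to $\ell$. Since every non-trivial finite $\ell$-group has non-trivial abelianization (via the Frattini quotient), no non-trivial $\ell$-group quotient of $\cG$ can exist, which is exactly the structural assertion.

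The main obstacle here is quite mild; the pivotal step is the abelianization computation $\cG^{\op{ab}}\cong(\Z/\ell\Z)^\times$, which converts what could otherwise be a delicate intersection analysis into a clean numerical incompatibility between $\ell$-group quotients and a group of order coprime to $\ell$.
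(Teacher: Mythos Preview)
Your proof is correct and follows essentially the same approach as the paper: for (i) both argue via disjoint ramification loci (the paper notes $\ell$ is totally ramified in $F$ but unramified in $L$), and for (ii) both identify $\op{Gal}(\Q(\mu_\ell,\sqrt[\ell]{p_1},\dots,\sqrt[\ell]{p_m})/\Q)\cong(\Z/\ell\Z)^m\rtimes(\Z/\ell\Z)^\times$ and observe that this group has no non-trivial $\ell$-group quotient, whereas $\op{Gal}(FL/\Q)$ is an $\ell$-group. Your explicit commutator computation $[\sigma_a,\tau_i]=\tau_i^{a-1}$ fleshes out what the paper leaves as ``it is easy to see,'' but the strategy is identical.
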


\begin{proof}
    The prime $\ell$ is unramified in $L$ and is totally ramified in $F$. Therefore, $F$ and $L$ are linearly disjoint. The Galois group $\op{Gal}(\Q(\mu_\ell, \sqrt[\ell]{p_1}, \dots, \sqrt[\ell]{p_m})/\Q)$ is isomorphic to $(\Z/\ell \Z)^m\rtimes (\Z/\ell\Z)^\times$, where multiplication is given by $(x,y)\cdot(x',y')=(x+yx', yy')$. It is easy to see that this group has no quotient of order $\ell$. On the other hand, $FL$ is an $\ell$-group. Hence we find that $FL \, \cap \,  \Q(\mu_\ell, \sqrt[\ell]{p_1}, \dots, \sqrt[\ell]{p_m})=\Q$.
\end{proof}

\begin{proposition}\label{constructing tilde L satisfying SN}
    There exists $\widetilde{L}/L/\Q$ solving the embedding problem such that:
    \begin{enumerate}
        \item $\widetilde{L}$ satisfies $(\mathfrak{S}_N)$,
        \item $\widetilde{L}$ is ramified at the primes in $\Sigma$ and one additional prime $q$. The prime $q$ can be chosen from a certain set of primes which has positive density.
    \end{enumerate}
     
\end{proposition}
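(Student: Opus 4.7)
The plan is to start from the extension $\widetilde{L}_0/L/\Q$ supplied by Proposition \ref{step 2 propn}, which solves the embedding problem and is ramified exactly at $\Sigma := \{p_1, \ldots, p_m\}$, and then twist by a carefully chosen character $f \in H^1(\op{G}_{\Q}, \Z/\ell\Z)$ so that $\widetilde{L} := \widetilde{L}_0^{f}$ satisfies $(\mathfrak{S}_N)$. The obstruction to $\widetilde{L}_0$ already being Scholz is purely local. For each $p_i \in \Sigma$, since $L_v/\Q_{p_i}$ is totally ramified, the image $\widetilde{\varphi}_0(I_{p_i})$ surjects onto $G_{p_i}$, and the quotient $\widetilde{\varphi}_0(\op{G}_{\Q_{p_i}})/\widetilde{\varphi}_0(I_{p_i})$ is either trivial or canonically isomorphic to the central $\Z/\ell\Z \hookrightarrow \widetilde{G}$ (via the decomposition $D_{p_i} = (D_{p_i} \cap \Z/\ell\Z)\cdot \widetilde{\varphi}_0(I_{p_i})$). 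I let $\alpha_{p_i} \in \Z/\ell\Z$ denote the image of $\widetilde{\varphi}_0(\op{Fr}_{p_i})$ under this identification, so that $\widetilde{L}_{0,w}/\Q_{p_i}$ is totally ramified if and only if $\alpha_{p_i} = 0$.

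I then select an auxiliary prime $q$ subject to the following Chebotarev conditions: (i) $q \equiv 1 \pmod{\ell^N}$; (ii) $q$ splits completely in $L$; and (iii) $\chi_{M_q}(\op{Fr}_{p_i}) = -\alpha_{p_i}$ for every $i$, where $M_q \subset \Q(\mu_q)$ is the unique $\Z/\ell\Z$-subextension. By $\ell$-th power reciprocity, condition (iii) is equivalent to $q$ having a prescribed Frobenius class in each of the Kummer extensions $\Q(\mu_\ell, \sqrt[\ell]{p_i})/\Q$, while (i) and (ii) prescribe the Frobenius of $q$ in $L \cdot \Q(\mu_{\ell^N})$. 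Lemma \ref{linear disj lemma} guarantees that the Galois group of the compositum
\[
L \cdot \Q(\mu_{\ell^N}) \cdot \Q(\mu_\ell, \sqrt[\ell]{p_1}, \ldots, \sqrt[\ell]{p_m})
\]
splits in the relevant way, so the combined prescription corresponds to a non-empty union of conjugacy classes. The Chebotarev density theorem then produces a positive-density set of primes $q$ satisfying (i)--(iii).

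Fixing such a $q$, I take $f := \chi_{M_q}$ and form $\widetilde{L} := \widetilde{L}_0^{f}$ in the sense of Definition \ref{defn of twist}. The verification of $(\mathfrak{S}_N)$ reduces to a direct local computation. The ramified primes of $\widetilde{L}$ are contained in $\Sigma \cup \{q\}$, and each satisfies $\equiv 1 \pmod{\ell^N}$ by (i) and by the Scholz property of $L$. At each $p_i$ the character $f$ is unramified (as $q \notin \Sigma$), so the inertia of $\widetilde{L}$ at $p_i$ agrees with that of $\widetilde{L}_0$; the Frobenius image is translated by the central scalar $f(\op{Fr}_{p_i}) = -\alpha_{p_i}$, which by construction cancels the defect and forces total ramification. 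At $q$, the prime is unramified in $\widetilde{L}_0$ and splits completely in $L$ by (ii), so the decomposition group in $\widetilde{L}$ equals the inertia of $f$, namely the full central $\Z/\ell\Z$, giving total ramification at $q$ as well.

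The main obstacle is to establish the well-definedness and twist-equivariance of the defect $\alpha_{p_i} \in \Z/\ell\Z$, and then to package conditions (i)--(iii) as a single Chebotarev condition on a non-empty conjugacy class in the appropriate compositum. Lemma \ref{linear disj lemma} is precisely what makes the latter packaging possible; once that is in hand, the Scholz property for $\widetilde{L}$ follows from the cocycle computation for $\widetilde{\varphi}_0 \cdot f$ at each local place.
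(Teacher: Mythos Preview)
Your proposal is correct and follows essentially the same line as the paper's proof. Both arguments start from the solution $\widetilde{L}_0$ furnished by Proposition~\ref{step 2 propn}, attach to each ramified prime $p_i$ a ``defect'' element in $\Z/\ell\Z$ (your $\alpha_{p_i}$, the paper's $c_p$) measuring the failure of total ramification, and then twist by the character $\chi^{(q)}$ of the unique $\Z/\ell\Z$-subextension of $\Q(\mu_q)$ for a prime $q$ chosen via Chebotarev so that $\chi^{(q)}(\op{Fr}_{p_i})$ cancels each defect; Lemma~\ref{linear disj lemma} is invoked in both to guarantee compatibility of the simultaneous Chebotarev constraints.

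The only noteworthy difference is organizational: the paper separates the ramified primes into two cases according to whether the preimage $\widetilde{I}_p$ of the inertia in $\widetilde{G}$ is $\Z/\ell^\alpha\Z\times\Z/\ell\Z$ or cyclic of order $\ell^{\alpha+1}$, and only imposes a constraint on $q$ for the primes in the first case (the set $X$), since in the second case total ramification is automatic and preserved under any unramified twist. Your treatment absorbs both cases into the single definition of $\alpha_{p_i}$ (which is $0$ in the cyclic case), thereby imposing the extra constraint $\chi_{M_q}(\op{Fr}_{p_i})=0$ at those primes. This is harmless---it merely shrinks the admissible set of $q$ while keeping it of positive density---but the paper's finer case analysis makes transparent why the cyclic case requires no correction at all.
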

\begin{proof}
From Proposition \ref{step 2 propn}, we obtain a solution \(\widetilde{L}\) to the embedding problem for \(\widetilde{G}\), which is ramified at precisely the same set of primes \(\Sigma = \{p_1, \dots, p_m\}\) as \(L\). Each prime \(p_i\) satisfies \(p_i \equiv 1 \pmod{\ell^N}\), and the decomposition group \(G_{p_i}\) coincides with the inertia group \(\operatorname{I}_{p_i}\) at \(p_i\). Since \(p_i\) is tamely ramified in \(L\), the inertia group \(\operatorname{I}_{p_i}\) is cyclic, and consequently, \(G_{p_i}\) is also cyclic for each \(i = 1, \dots, m\).

Now, fix \(p = p_i\) and let $I_p'$ (resp. $G_p'$) is the inertia (resp. decomposition) group of $p$ in $\widetilde{L}$. On the other hand, we set $\widetilde{I}_p$ (resp. $\widetilde{G}_p$) to be the inverse image of $I_p$ (resp. $G_p$) with respect to the map $\pi: \widetilde{G}\rightarrow G$. Since $I_p=G_p$, it follows that $\widetilde{I}_p=\widetilde{G}_p$. Since the ramification of $p$ is tame, the inertia groups $I_p$ and $I_p'$ are cyclic. Suppose that \(I_p \simeq \mathbb{Z}/\ell^\alpha\mathbb{Z}\). From the proof of Proposition \ref{local extension is solvable}, $\widetilde{G}_p$ is abelian, hence so is $\widetilde{I}_p$. Therefore, there are two possibilities for \(\widetilde{I}_p\).
\begin{description}
    \item[Case 1. \(\widetilde{I}_p \simeq \mathbb{Z}/\ell^\alpha\mathbb{Z} \times \mathbb{Z}/\ell\mathbb{Z}\)] Since $I_p'$ is cyclic and the quotient map from $I_p'$ to $I_p$ is surjective, it follows that $I_p'\simeq I_p$. On the other hand, $G_p'\subseteq \widetilde{G}_p=\widetilde{I}_p$ and thus $G_p'/I_p'$ is either $1$ or $\Z/\ell \Z$. There is an inclusion
    \[G_p'/I_p'\hookrightarrow \widetilde{G}_p/I_p'\xrightarrow{\sim} \Z/\ell \Z\] with respect to which the Frobenius element \(\sigma_p\) maps to some \(c_p \in \mathbb{Z}/\ell\mathbb{Z}\). If \(c_p = 1\), then \(G_p' = I_p'\) and the condition \((\mathfrak{S}_N)\) is satisfied at \(p\). 
    \item[Case 2. \(\widetilde{I}_p \simeq \mathbb{Z}/\ell^{\alpha+1}\mathbb{Z}\)] Since the map $I_p'\rightarrow I_p$ is surjective, it follows that $I_p'=\widetilde{I}_p=\widetilde{G}_p$. This forces $I_p'=G_p'$. In this case, we simply set $c_p:=1$.
\end{description}
If \(c_p = 1\) for all the primes $p$ in Case 1 above, then \(\widetilde{L}\) satisfies property \((\mathfrak{S}_N)\). Otherwise, we must modify \(\widetilde{L}\). Define $X$ as the set of all primes ramified in \(L\) such that $\widetilde{I}_p\simeq I_p\times \Z/\ell \Z$. \\

  Given a prime $q\equiv 1\pmod{\ell^N}$, there exists a unique surjective Galois character \[\chi^{(q)}:\op{Gal}(\Q(\mu_q)/\Q)\rightarrow \Z/\ell\Z.\] Identifying $\op{Gal}(\Q(\mu_q)/\Q)$ with $(\Z/q\Z)^\times$, and the $\ell$-torsion in $(\Z/q\Z)^\times$ with $\Z/\ell\Z$, $\chi^{(q)}$ is given by $\chi^{(q)}(a):=a^{(q-1)/\ell}$. Set \(\mathbb{L} := L\left(\mu_{\ell^N}\cup \{\sqrt[\ell]{p}| p
\in X\}\right)\) and let \(\Delta := \operatorname{Gal}(\mathbb{L}/\mathbb{Q})\). There exists \(\sigma\in \Delta\) such that, for any prime \(q\) unramified in \(\mathbb{L}\) with \(\sigma_q =\sigma\), the following conditions hold:

\begin{enumerate}
    \item \(q \equiv 1 \pmod{\ell^N}\),
    \item \(q\) splits completely in \(L/\mathbb{Q}\),
    \item \(\chi^{(q)}(p) = c_{p}\) for all $p\in X$.
\end{enumerate}
More precisely, $\sigma\in \op{Gal}(\mathbb{L}/\Q)$ is the element defined by:
\begin{itemize}
    \item $\sigma_{|FL}=1$, 
    \item $\sigma(\zeta_\ell)=\zeta_\ell$, 
    \item $\sigma(\sqrt[\ell]{p})=c_p \sqrt[\ell]{p}$ for all primes $p\in X$.
\end{itemize}
Such an element exists since $FL\cap \Q\left(\mu_{\ell^N}\cup \{\sqrt[\ell]{p}| p
\in X\}\right)=\Q$ by Lemma \ref{linear disj lemma}. Let \(\widetilde{\varphi} : \operatorname{G}_{\mathbb{Q}} \to \widetilde{G}\) correspond to \(\widetilde{L}\), and define \(\widetilde{\psi} := \widetilde{\varphi} (\chi^{(q)})^{-1}\), with the corresponding extension denoted \(\widetilde{L}'/L/\mathbb{Q}\). This modified extension solves the embedding problem for \(\widetilde{G}\) while ensuring that property \((\mathfrak{S}_N)\) is satisfied.

\end{proof}

\begin{theorem}
    Let $G$ be an $\ell$-group. Then there exist infinitely many Galois extensions $L/\Q$ with $\op{Gal}(L/\Q)\simeq G$. 
\end{theorem}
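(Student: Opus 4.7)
The plan is to apply Theorem \ref{inverse-Gal-thm} inductively along the filtration $\{1\} = G_n \subset G_{n-1} \subset \cdots \subset G_0 = G$ from \eqref{filtration on G}, in which each successive quotient $G_i/G_{i+1} \simeq \Z/\ell\Z$ sits centrally in $G/G_{i+1}$. The base case is $L_0 := \Q$, which trivially satisfies $(\mathfrak{S}_N)$ for any $N \geq n$ since no rational primes ramify. Inductively, assuming $L_i/\Q$ has been constructed with $\op{Gal}(L_i/\Q) \simeq G/G_i$ and satisfying $(\mathfrak{S}_N)$, I invoke Theorem \ref{inverse-Gal-thm} applied to the central extension
\[ 1 \to \Z/\ell\Z \to G/G_{i+1} \to G/G_i \to 1 \]
to produce $L_{i+1}/\Q$ with $\op{Gal}(L_{i+1}/\Q) \simeq G/G_{i+1}$ and again satisfying $(\mathfrak{S}_N)$. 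After $n$ iterations, $L := L_n$ is a Galois extension of $\Q$ with Galois group isomorphic to $G$, establishing existence.

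For the infinitude statement, the key observation is that at each inductive step, the proof of Theorem \ref{inverse-Gal-thm} permits the additional ramifying prime $q$ to be chosen from an infinite set of positive density prescribed by Chebotarev conditions: this is the set $\mathcal{P}_L$ in the split case of Subsection \ref{split-case-proof-subsubsection}, and the analogous set appearing in the proof of Proposition \ref{constructing tilde L satisfying SN} in the non-split case. Fixing any tower $L_0 \subset L_1 \subset \cdots \subset L_{n-1}$ built via the induction, the final step then furnishes a solution $L = L_n$ to the embedding problem for each choice of $q$ in the relevant Chebotarev set, giving infinitely many candidate $L$'s.

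It remains to verify that different choices of $q$ produce genuinely non-isomorphic extensions. By Theorem \ref{inverse-Gal-thm}, $L$ is ramified at precisely the primes ramifying in $L_{n-1}$ together with $q$, and similarly for any alternative $L'$ built using a different prime $q'$. Hence if $q \neq q'$ both lie outside the finite ramification locus of $L_{n-1}$, the symmetric difference of the ramification loci of $L$ and $L'$ is non-empty, forcing $L \neq L'$ as subfields of $\overline{\Q}$. Since the relevant Chebotarev set is infinite, this yields infinitely many Galois extensions $L/\Q$ with $\op{Gal}(L/\Q) \simeq G$. There is no substantive obstacle beyond bookkeeping here; the entire difficulty of the construction has already been absorbed into Theorem \ref{inverse-Gal-thm} and the fact that the Chebotarev conditions cut out infinitely many primes.
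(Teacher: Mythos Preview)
Your proposal is correct and follows essentially the same approach as the paper: induct along the central filtration \eqref{filtration on G}, applying Theorem \ref{inverse-Gal-thm} (via the split-case construction or Proposition \ref{constructing tilde L satisfying SN}) at each stage, and exploit the infinitude of the Chebotarev set of admissible primes $q$ at the final step. Your explicit verification that distinct choices of $q$ yield distinct fields (by comparing ramification loci) is a detail the paper leaves implicit; note only that the claim ``$L$ is ramified precisely at the primes of $L_{n-1}$ together with $q$'' is a feature of the explicit constructions in Subsection \ref{split-case-proof-subsubsection} and Proposition \ref{constructing tilde L satisfying SN} rather than of the statement of Theorem \ref{inverse-Gal-thm}, which only guarantees \emph{at most} one new ramified prime.
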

\begin{proof}
    Set $N$ to be such that $\# G=\ell^N$. Recall the filtration on $G$ by subgroups $G_i$ as in \eqref{filtration on G}. By induction on $i$, we construct Galois extensions
\[\Q=L_0\subset L_1\subset L_2\subset \dots L_{n-1}\subset L_n=L\] such that $\op{Gal}(L_i/\Q)\simeq G/G_i$ and $L_i/\Q$ satisfies the condition $(\mathfrak{S}_N)$. Assume that $L_i/\Q$ can be constructed so that $\op{Gal}(L_i/\Q)\simeq G/G_i$ and $(\mathfrak{S}_N)$ is satisfied. If the exact sequence
\[1\rightarrow \Z/\ell \Z\rightarrow G/G_{i+1}\rightarrow G/G_i\rightarrow 1\] is split, then the construction of $L_{i+1}/L_i/\Q$ follows from the argument in section \ref{split-case-proof-subsubsection}. On the other hand, if the above exact sequence is non-split, then, the construction of $L_{i+1}/L_i/\Q$ can be obtained by Proposition \ref{constructing tilde L satisfying SN} in section \ref{non-split-case-proof-subsubsection}. The construction involves an application of the Chebotarev density theorem in choosing the prime $q$. In particular, there are infinitely many choices of $q$, and hence, infinitely many choices of $L_{i+1}$. This completes the proof.
\end{proof}
\section{Selmer and diophantine stability results}\label{s 3}
\subsection{Selmer groups and diophantine stability in cyclic $\ell$-extensions}
\par We summarize and recall the results in \cite[section 2]{pathakray}. Throughout this section, $\ell$ is a fixed odd rational prime number and $L/K$ a Galois extension of number fields with $\op{Gal}(L/K)=G\simeq \Z/\ell\Z$. Let $M$ be a finite dimensional $\F_\ell$ vector space equipped with the structure of a $G$-module. Define \( M^G := \{m \in M \mid gm = m \text{ for all } g \in G\} \) and \( M_G := M / \langle (g-1)m \mid g \in G, m \in M \rangle \). The following lemma is useful.

\begin{lemma}\label{NSW lemma}
If \( M^G = 0 \) or \( M_G = 0 \), then \( M = 0 \).
\end{lemma}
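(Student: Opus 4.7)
The plan is to exploit the fact that, in characteristic $\ell$, the element $g - 1 \in \F_\ell[G]$ is nilpotent, where $g$ denotes a generator of the cyclic group $G \simeq \Z/\ell\Z$. Let $T: M \to M$ be the $\F_\ell$-linear endomorphism $T(m) := (g-1)m$. Directly from the definitions one has $M^G = \ker(T)$ and $M_G = \coker(T) = M/T(M)$, so it suffices to prove the contrapositive: if $M \neq 0$, then both $\ker(T)$ and $\coker(T)$ are nonzero.

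First I would verify that $T$ is nilpotent. Because $\F_\ell[G]$ is a commutative $\F_\ell$-algebra and Frobenius is additive in characteristic $\ell$, we compute
\[
T^\ell = (g-1)^\ell = g^\ell - 1 = 0
\]
in $\F_\ell[G]$, whence $T^\ell = 0$ as an endomorphism of $M$.

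Now suppose for contradiction that $M \neq 0$. Since $T$ is a nilpotent endomorphism of the nonzero finite-dimensional $\F_\ell$-vector space $M$, its only eigenvalue is $0$, so $\ker(T) \neq 0$. By rank-nullity for an endomorphism of a finite-dimensional vector space, injectivity and surjectivity of $T$ are equivalent, so $T$ must also fail to be surjective, giving $\coker(T) \neq 0$. Therefore $M^G \neq 0$ and $M_G \neq 0$, which is precisely the contrapositive of the lemma.

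This argument is essentially a one-line application of the structure of $\F_\ell[G]$, so there is no real obstacle; the only subtlety worth flagging is the identity $(g-1)^\ell = g^\ell - 1$, which fails in characteristic zero but holds here precisely because $\op{char}\F_\ell = \ell = \#G$.
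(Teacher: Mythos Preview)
Your argument is correct. The paper itself does not give a proof but simply cites \cite[Proposition 1.6.12]{NSW}, whereas you supply a self-contained argument exploiting that $(g-1)^\ell = g^\ell - 1 = 0$ in $\F_\ell[G]$, so that $T = g-1$ is a nilpotent endomorphism of the finite-dimensional $\F_\ell$-vector space $M$. Your route has the advantage of being elementary and fully written out; the cited result in NSW is more general (it applies to any finite $p$-group acting on a $p$-primary abelian group), but for the specific setting here---$G$ cyclic of order $\ell$ acting on a finite-dimensional $\F_\ell$-vector space---your nilpotency argument is exactly the right level of generality and avoids an external reference.
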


\begin{proof}
See \cite[Proposition 1.6.12]{NSW}.
\end{proof}Let $E$ be a fixed elliptic curve defined over $K$. Given a finite extension $F/K$, denote by $E(F)$ (resp. $\Sh(E/F)$) the \emph{Mordell--Weil group} (resp. \emph{Tate--Shafarevich group}) of $E$ over $F$. Fix an algebraic closure \(\overline{L}\) of \(L\) and let \(\operatorname{G}_L = \operatorname{Gal}(\overline{L}/L)\) denote the absolute Galois group of \(L\). Let \(\Omega_L\) be the set of all non-archimedean places of \(L\), and for each \( v \in \Omega_L \), let \(L_v\) denote the completion of \(L\) at \(v\). Choose an algebraic closure \(\overline{L}_v\) of \(L_v\) and fix an embedding \(\iota_v: \overline{L} \hookrightarrow \overline{L}_v\). This choice induces an embedding of Galois groups \(\iota_v^*: \operatorname{G}_{L_v} \hookrightarrow \operatorname{G}_L\), where we set \(\operatorname{G}_{L_v} := \operatorname{Gal}(\overline{L}_v / L_v)\). For notational convenience, we write  
\[
H^i(L, \cdot) := H^i(\operatorname{G}_L, \cdot), \quad H^i(L_v, \cdot) := H^i(\operatorname{G}_{L_v}, \cdot)
\]  
for all \( i \geq 0 \) and \( v \in \Omega_L \). Given a global cohomology class \( f \), we denote its restriction to \(\operatorname{G}_{L_v}\) by \(\operatorname{res}_v(f)\).\\

Given a prime $v$ of $L$ and a natural number $n$, let \[\kappa_{E,v}^{(n)}:\frac{E(L_v)}{\ell^n E(L_v)}\hookrightarrow H^1(L_v, E[\ell^n])\] denote the \emph{Kummer map}. Passing to the direct limit, we get the map:
\[\kappa_{E,v}:E(L_v)\otimes \left(\Q_\ell/\Z_\ell\right)\hookrightarrow H^1(L_v, E[\ell^\infty]).\]
For $n\in \Z_{\geq 1}\cup \{\infty\}$, the Selmer group associated with the pair \((E, L)\) and the prime power \( \ell^n \) is defined as  
\[
\operatorname{Sel}_{\ell^n}(E/L) := \ker \left( H^1(L, E[\ell^n]) \to \prod_{v \in \Omega_L} H^1(L_v, E)[\ell^n] \right).
\]  
Equivalently, \(\operatorname{Sel}_{\ell^n}(E/L)\) consists of cohomology classes \( f \in H^1(L, E[\ell^n]) \) whose restrictions to every completion \( L_v \), for \( v \in \Omega_L \), lie in the image of the local Kummer map \( \kappa_{E,v}^{(n)} \). There is a natural short exact sequence relating the Mordell--Weil, Selmer and Tate--Shafarevich groups:
\[
0 \to E(L) \otimes \mathbb{Q}_\ell/\mathbb{Z}_\ell \to \operatorname{Sel}_{\ell^\infty}(E/L) \to \Sh(E/L)[\ell^\infty] \to 0.
\]

\begin{definition}\label{choice of S} Let $S$ be the finite set of primes $v$ of $K$ such that at least one of the following conditions is satisfied:
 \begin{itemize}
        \item $v$ is ramified in $L$, 
        \item $E$ has bad reduction at $v$, 
        \item $v$ divides $\ell$.
    \end{itemize}
\end{definition}  

We denote by \( S(L) \) the set of primes of \( L \) lying above those in \( S \). Let \( L_S \) be the maximal subextension of \( \overline{L}/L \) in which all non-archimedean primes \( v \notin S \) remain unramified. In fact, the \( \ell\)-Selmer group of \( E \) over \( L \) is then given by
\[
\op{Sel}_\ell(E/L) := \ker\left(H^1(L_S/L, E[\ell]) \xrightarrow{\Phi_{S,L}} \bigoplus_{w\in S(L)} H^1(L_w, E)[\ell] \right),
\]
where \( \Phi_{S,L} \) is the direct sum of restriction maps at primes \( w \in S(L) \). Given a vector space $V$ over $\F_\ell$, set $V^\vee$ to denote its dual. The Cassels--Poitou--Tate sequence then gives the following:
\[
0 \to \op{Sel}_\ell(E/K) \to H^1(K_S/K, E[\ell]) \xrightarrow{\Phi_{S,K}} \bigoplus_{v\in S} H^1(K_v, E)[\ell] \to \op{Sel}_\ell(E/K)^\vee \to H^2(K_S/K, E[\ell]) \to \dots,
\]
cf. \cite[p.8]{GCEC}. For a prime \( v \) of \( K \), define the restriction map
\[
\gamma_v: H^1(K_v, E)[\ell] \to \bigoplus_{w \mid v} H^1(L_w, E)[\ell],
\]
where \( w \) runs over primes of \( L \) above \( v \). Assume that \( \op{Sel}_\ell(E/K) = 0 \), and note that in this case the map \( \Phi_{S,K} \) is surjective. Note that there is a short exact sequence 
\[0\rightarrow E(K)\otimes \Z/\ell \Z\rightarrow \op{Sel}_\ell(E/K)\rightarrow \Sh(E/K)[\ell]\rightarrow 0. \]  As $\op{Sel}_\ell(E/K)=0$, we have that \( E(K) \otimes \mathbb{Z}/\ell \mathbb{Z} = 0 \), implying \( E(K)[\ell] = 0 \) and \( \op{rank} E(K) = 0 \). Thus we have a commutative diagram:
\begin{equation}\label{main commutative diagram}
\begin{tikzcd}[column sep = small, row sep = large]
& 0 = \op{Sel}_\ell(E/K) \arrow{r}\arrow{d}{\alpha} & H^1(K_S/K, E[\ell])\arrow{r}{\Phi_{S,K}} \arrow{d}{\beta} & \bigoplus_{v\in S} H^1(K_v, E)[\ell] \arrow{d}{\gamma} \arrow{r} & 0 \\
0\arrow{r} & \op{Sel}_\ell(E/L)^G\arrow{r} & H^1(K_S/L, E[\ell])^G\arrow{r}  &\left(\bigoplus_{w\in S(L)} H^1(L_w, E)[\ell]\right)^G,
\end{tikzcd}
\end{equation}
where \( \gamma = \bigoplus_{v \in S} \gamma_v \).

\begin{proposition}\label{vanishing of selmer propn}
Suppose \( \op{Sel}_\ell(E/K) = 0 \). Let \( S \) be as in Definition \ref{choice of S}. Then:
\begin{enumerate}
    \item[(i)] \( \dim_{\F_\ell}(\op{Sel}_\ell(E/L)^G) = \sum_{v \in S} \dim_{\F_\ell}(\ker \gamma_v) \).
    \item[(ii)] \( \op{Sel}_\ell(E/L) = 0 \) if and only if \( \gamma_v \) is injective for all \( v \in S \).
\end{enumerate}
\end{proposition}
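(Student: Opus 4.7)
The plan is to run a short diagram chase on \eqref{main commutative diagram}, the crucial ingredient being that the middle vertical map $\beta$ is an isomorphism. First, since $\op{Sel}_\ell(E/K)=0$ forces $E(K)[\ell]=0$, and since $E(L)[\ell]$ is an $\F_\ell[G]$-module whose $G$-invariants are $E(K)[\ell]=0$, Lemma \ref{NSW lemma} yields $E(L)[\ell]=0$. Because the primes of bad reduction of $E$ and the prime $\ell$ all lie in $S$, the module $E[\ell]$ is unramified outside $S$ and hence $E[\ell]\subset E(K_S)$, so $E[\ell]^{\op{Gal}(K_S/L)}=E(L)[\ell]=0$. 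Inflation--restriction applied to $1\to \op{Gal}(K_S/L)\to \op{Gal}(K_S/K)\to G\to 1$ then shows that $\beta$ is an isomorphism.

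Next, the Cassels--Poitou--Tate sequence displayed earlier shows that $\Phi_{S,K}$ is also an isomorphism: its kernel equals $\op{Sel}_\ell(E/K)=0$ and its cokernel injects into $\op{Sel}_\ell(E/K)^\vee=0$. Combining this with the fact that $\beta$ is an isomorphism, the commutativity of \eqref{main commutative diagram} identifies the subspace $\op{Sel}_\ell(E/L)^G \subset H^1(K_S/L,E[\ell])^G$ with $\beta^{-1}\bigl(\Phi_{S,K}(\ker \gamma)\bigr)$, and hence with $\ker \gamma=\bigoplus_{v\in S}\ker \gamma_v$. Taking $\F_\ell$-dimensions proves (i).

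For (ii), if every $\gamma_v$ is injective then (i) yields $\op{Sel}_\ell(E/L)^G=0$, and since $\op{Sel}_\ell(E/L)$ is an $\F_\ell[G]$-module, Lemma \ref{NSW lemma} forces $\op{Sel}_\ell(E/L)=0$; the converse is immediate. The entire argument is formal once $\beta$ is known to be an isomorphism, so the only substantive point is the vanishing of $E(L)[\ell]$, which is precisely where Lemma \ref{NSW lemma} is first invoked. I do not anticipate any additional obstacles beyond carefully unpacking the Poitou--Tate and inflation--restriction inputs.
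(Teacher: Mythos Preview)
Your proposal is correct and follows essentially the same route as the paper: both arguments use Lemma \ref{NSW lemma} to deduce $E(L)[\ell]=0$, invoke inflation--restriction to conclude that $\beta$ is an isomorphism, and then perform the evident diagram chase (using that $\Phi_{S,K}$ is bijective by Cassels--Poitou--Tate) to identify $\op{Sel}_\ell(E/L)^G$ with $\ker\gamma$; part (ii) is handled identically via a second application of Lemma \ref{NSW lemma}. Your write-up simply makes explicit a couple of points (e.g.\ $E[\ell]\subset E(K_S)$) that the paper leaves implicit.
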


\begin{proof}
Consider the commutative diagram \eqref{main commutative diagram}. The inflation-restriction sequence gives
\[
\ker \beta \simeq H^1(L/K, E(L)[\ell]), \quad \coker \beta \subseteq H^2(L/K, E(L)[\ell]).
\]
As \( L/K \) is a \( p \)-extension and \( E(L)[\ell]^G = E(K)[\ell] = 0 \), Lemma \ref{NSW lemma} yields \( E(L)[\ell] = 0 \), so \( \ker \beta = \coker \beta = 0 \). Thus, \( \op{Sel}_\ell(E/L)^G \simeq \ker \gamma \), proving (i). The claim in (ii) follows from Lemma \ref{NSW lemma}.
\end{proof}

Given a prime $v$ of $K$, let $k_v$ be the residue field of $K$ at $v$ and let $\widetilde{E}(k_v)$ be the group of $k_v$ points of the reduction of $E$ at $v$.
\begin{proposition}\label{basic prop for Sel(E/L)=0}
    With respect to the above notation, assume that the following conditions hold:
    \begin{enumerate}
        \item[(i)] $\op{Sel}_\ell(E/K)=0$,
        \item[(ii)] all primes of $K$ that lie above $\ell$ and the primes of bad reduction for $E$ are completely split in $L$,
        \item[(iii)] all primes $v$ of $K$ that ramify in $L$ satisfy $\widetilde{E}(k_v)[\ell]=0$.
    \end{enumerate}
Then, $\op{Sel}_\ell(E/L)=0$. 
\end{proposition}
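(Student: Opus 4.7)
By Proposition \ref{vanishing of selmer propn}(ii), it suffices to show that the restriction map $\gamma_v\colon H^1(K_v,E)[\ell]\to \bigoplus_{w\mid v} H^1(L_w,E)[\ell]$ is injective for every $v\in S$. I would split the primes of $S$ into two classes: those which are split completely in $L/K$, and those which are (totally) ramified. Hypothesis (ii) guarantees that primes above $\ell$ and primes of bad reduction fall into the first class, so the remaining primes of $S$ fall into the second class and, by (iii), satisfy $\widetilde{E}(k_v)[\ell]=0$; note also that such primes necessarily have residue characteristic $p_v\neq \ell$ since the primes above $\ell$ were already absorbed by (ii).

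If $v$ splits completely in $L$, then for every prime $w\mid v$ one has $L_w=K_v$ and the component of $\gamma_v$ at $w$ is the identity; hence $\gamma_v$ is injective (its image is the diagonal). This disposes of the first class at once.

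For the second class, $L/K$ being Galois of prime degree $\ell$ forces $L_w/K_v$ to be cyclic totally ramified of degree $\ell$ for every $w\mid v$; pick any such $w$ and set $G_v:=\op{Gal}(L_w/K_v)\simeq \Z/\ell\Z$. The inflation--restriction sequence applied to the $G_v$-module $E(L_w)$ gives
\[
\ker\bigl(H^1(K_v,E)\to H^1(L_w,E)\bigr)=H^1(G_v,E(L_w)),
\]
so $\ker\gamma_v\subseteq H^1(G_v,E(L_w))$. It remains to show $H^1(G_v,E(L_w))=0$. The key step, and the main technical point, will be to exploit the formal-group short exact sequence
\[
0\longrightarrow \widehat{E}(\mathfrak{m}_{L_w})\longrightarrow E(L_w)\longrightarrow \widetilde{E}(k_w)\longrightarrow 0.
\]
Since $p_v\neq \ell$, multiplication by $\ell$ is an automorphism of $\widehat{E}(\mathfrak{m}_{L_w})$, so this $G_v$-module is uniquely $\ell$-divisible; combined with $|G_v|=\ell$, this forces $H^i(G_v,\widehat{E}(\mathfrak{m}_{L_w}))=0$ for all $i\geq 1$. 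The long exact cohomology sequence then yields $H^1(G_v,E(L_w))\hookrightarrow H^1(G_v,\widetilde{E}(k_w))$. Because $L_w/K_v$ is totally ramified, $k_w=k_v$ and the $G_v$-action on $\widetilde{E}(k_w)$ is trivial, so
\[
H^1(G_v,\widetilde{E}(k_w))=\op{Hom}(G_v,\widetilde{E}(k_v))=\widetilde{E}(k_v)[\ell],
\]
which vanishes by hypothesis (iii). Hence $\ker\gamma_v=0$ in this case as well, and Proposition \ref{vanishing of selmer propn}(ii) gives $\op{Sel}_\ell(E/L)=0$.
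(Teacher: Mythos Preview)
Your proof is correct and follows the same approach as the paper: reduce to injectivity of each $\gamma_v$ via Proposition~\ref{vanishing of selmer propn}(ii), and then treat split and ramified primes separately. The paper itself defers the verification that each $\gamma_v$ is injective to \cite[Proposition 2.9]{pathakray}, whereas you supply the details directly (formal group filtration plus the vanishing of $\widetilde{E}(k_v)[\ell]$); these are exactly the intended details.
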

\begin{proof}
    It follows from the assumptions above that $\gamma_\ell$ is injective for every prime $\ell\in S$ and thus by Proposition \ref{vanishing of selmer propn}, $\op{Sel}_\ell(E/L)=0$. For further details, we refer to the proof of \cite[Proposition 2.9]{pathakray}.
    \end{proof}
Let \(\rho_{E, \ell}:\op{G}_{\Q} \to \op{GL}_2(\Z/\ell\Z)\) be the Galois representation on \(E[\ell]\), and let \(\Q(E[\ell])\) be the field fixed by \(\ker \rho_{E,\ell}\), so that \(\op{Gal}(\Q(E[\ell])/\Q) \simeq \op{im} \rho_{E,\ell}\). For a prime \(p\) of good reduction for \(E\), let \(\op{G}_{\Q_p} = \op{Gal}(\overline{\Q}_p/\Q_p)\) with inertia subgroup \(\op{I}_p\), and choose a lift \(\sigma_p \in \op{G}_{\Q_p}\) of the Frobenius. The trace and determinant of \(\rho_{E,\ell}(\sigma_p)\) satisfy  
\[
\op{trace} \rho_{E,\ell}(\sigma_p) \equiv a_p(E) \bmod \ell, \quad \op{det} \rho_{E, \ell}(\sigma_p) \equiv p \bmod \ell.
\]
Setting \(K(E[\ell]) = K\cdot \Q(E[\ell])\), we introduce a key set of primes central to our results. 

\begin{definition}\label{defn of TE}Define \( \mathfrak{T}_{E,K} \) as the set of rational primes \( p \) satisfying: (a) \( p \neq \ell \), (b) \( p \) is a prime of good reduction for \( E \), (c) \( p \) is completely split in \( K(\mu_\ell) \), and (d) \(\op{trace}(\rho_{E,\ell}(\sigma_p)) \neq 2 \).
\end{definition}
\begin{remark}\label{TE remark}For $p\in \mathfrak{T}_{E,K}$, $\widetilde{E}(k_v)[\ell]=0$ for any of the primes of $K$ such that $v|\ell$ (cf. \cite[Lemma 2.11]{pathakray}). 
\end{remark}

\begin{lemma}\label{density lemma}
   If \(\rho_{E, \ell}\) is surjective and \(K(\mu_\ell) \cap \Q(E[\ell]) = \Q(\mu_\ell)\), then \(\mathfrak{T}_{E,K}\) has natural density  
\[
\mathfrak{d}(\mathfrak{T}_{E,K}) = \frac{\ell^2 - \ell - 1}{[K(\mu_\ell):\Q(\mu_\ell)]\,(\ell^2-1)(\ell-1)}.
\]
In particular, \(\mathfrak{T}_{E,K}\) is infinite.
\end{lemma}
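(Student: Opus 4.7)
The plan is to recast membership in $\mathfrak{T}_{E,K}$ as a Chebotarev condition inside a single finite Galois extension of $\Q$. The natural choice is the compositum $M := K(\mu_\ell)\cdot \Q(E[\ell])$, which contains $\Q(\mu_\ell)$. Conditions (a) and (b) of Definition \ref{defn of TE} exclude only finitely many primes and so are irrelevant to the density. For every prime $p$ unramified in $M$, condition (c) is equivalent to the assertion that the Frobenius $\sigma_p \in \op{Gal}(M/\Q)$ restricts trivially to $\op{Gal}(K(\mu_\ell)/\Q)$, while condition (d) says precisely $\op{tr}(\rho_{E,\ell}(\sigma_p)) \neq 2$ inside $\op{Gal}(\Q(E[\ell])/\Q) \simeq \op{GL}_2(\F_\ell)$, where I use the surjectivity hypothesis. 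Both conditions are conjugation-invariant, so they cut out a well-defined Chebotarev class.

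Next, the linear disjointness assumption $K(\mu_\ell) \cap \Q(E[\ell]) = \Q(\mu_\ell)$ lets me identify
\[
\op{Gal}(M/\Q) \;\simeq\; \op{Gal}(K(\mu_\ell)/\Q) \times_{(\Z/\ell\Z)^\times} \op{GL}_2(\F_\ell),
\]
where the map from the right factor is the determinant (equivalently, the cyclotomic character via the Weil pairing) and the map from the left is restriction to $\Q(\mu_\ell)$. Under this identification, the favourable pairs $(g_1,g_2)$ are those with $g_1 = 1$; the fiber product compatibility then forces $\det g_2 = 1$, i.e.\ $g_2 \in \op{SL}_2(\F_\ell)$; and condition (d) restricts further to $\op{tr}(g_2) \neq 2$.

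The main quantitative step is to count $T := \{g \in \op{SL}_2(\F_\ell) : \op{tr}(g) = 2\}$. Parametrising $g = \mtx{a}{b}{c}{2-a}$ by the equation $bc = -(a-1)^2$ and separating the cases $a=1$ (which contributes $2\ell-1$ elements) and $a \neq 1$ (which contributes $(\ell-1)^2$ elements) gives $|T| = \ell^2$; equivalently, $T$ is the set of unipotent elements of $\op{SL}_2(\F_\ell)$, of known order $\ell^{n^2-n}$ with $n=2$. Since $|\op{SL}_2(\F_\ell)| = \ell(\ell^2-1)$, the favourable subset has size $\ell(\ell^2-\ell-1)$. A short calculation gives
\[
|\op{Gal}(M/\Q)| \;=\; \frac{[K(\mu_\ell):\Q]\cdot |\op{GL}_2(\F_\ell)|}{\ell-1} \;=\; [K(\mu_\ell):\Q(\mu_\ell)]\cdot \ell(\ell-1)^2(\ell+1),
\]
and applying the Chebotarev density theorem yields
\[
\mathfrak{d}(\mathfrak{T}_{E,K}) \;=\; \frac{\ell(\ell^2-\ell-1)}{[K(\mu_\ell):\Q(\mu_\ell)]\,\ell(\ell-1)^2(\ell+1)} \;=\; \frac{\ell^2-\ell-1}{[K(\mu_\ell):\Q(\mu_\ell)]\,(\ell^2-1)(\ell-1)}.
\]
Positivity of this density (which is clear since $\ell^2-\ell-1 > 0$ for $\ell\geq 2$) then gives infinitude. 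The only mildly non-routine point is the unipotent count in $\op{SL}_2(\F_\ell)$; everything else is bookkeeping with the fiber product and a standard Chebotarev application.
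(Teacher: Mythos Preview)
Your argument is correct: the Chebotarev reduction in $M=K(\mu_\ell)\cdot\Q(E[\ell])$, the fibre-product identification over $(\Z/\ell\Z)^\times$, and the count $|\{g\in\op{SL}_2(\F_\ell):\op{tr}g=2\}|=\ell^2$ all check out and give the stated density. The paper itself does not supply a proof but defers to \cite[Lemma~2.12]{pathakray}; your computation is exactly the standard one expected there, with the only tacit assumption being that $K(\mu_\ell)/\Q$ is Galois (which holds in every application in the paper).
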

\begin{proof}
See the proof of \cite[Lemma 2.12]{pathakray}.
\end{proof}

\subsection{Main results}
\par Let $\ell\geq 5$ be a prime number and $E$ be an elliptic curve over $\Q$. We note the following recurring observation.
\begin{lemma}\label{PSL2-lemma}
Assume that $\rho_{E,\ell}$ is surjective and let $F/\Q(\mu_\ell)$ be a Galois extension for which $\op{Gal}(F/\Q(\mu_\ell))$ is an $\ell$-group. Then, 
\begin{equation*}
    F \cap \Q(E[\ell]) = \Q(\mu_{\ell}).
\end{equation*}
\end{lemma}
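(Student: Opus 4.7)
The plan is to set $K := F \cap \Q(E[\ell])$ and show that $\op{Gal}(K/\Q(\mu_\ell))$ must be trivial. Since both $F/\Q(\mu_\ell)$ and $\Q(E[\ell])/\Q(\mu_\ell)$ are Galois, so is $K/\Q(\mu_\ell)$. The natural restriction maps
\[
\op{Gal}(F/\Q(\mu_\ell)) \twoheadrightarrow \op{Gal}(K/\Q(\mu_\ell)), \qquad \op{Gal}(\Q(E[\ell])/\Q(\mu_\ell)) \twoheadrightarrow \op{Gal}(K/\Q(\mu_\ell))
\]
then exhibit $\op{Gal}(K/\Q(\mu_\ell))$ as a quotient of both Galois groups. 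By hypothesis the first of these is an $\ell$-group, so $\op{Gal}(K/\Q(\mu_\ell))$ is itself an $\ell$-group.

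Next, I would use the Weil pairing, which implies that $\det \rho_{E,\ell}$ coincides with the mod-$\ell$ cyclotomic character, so $\Q(\mu_\ell) \subset \Q(E[\ell])$ and the subgroup $\op{Gal}(\Q(E[\ell])/\Q(\mu_\ell))$ of $\op{im}\rho_{E,\ell}$ is precisely the preimage of $\op{SL}_2(\F_\ell)$ under the determinant map. Surjectivity of $\rho_{E,\ell}$ therefore gives the identification $\op{Gal}(\Q(E[\ell])/\Q(\mu_\ell)) \simeq \op{SL}_2(\F_\ell)$.

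The heart of the argument is a standard group-theoretic observation: for $\ell \geq 5$ the group $\op{PSL}_2(\F_\ell)$ is simple, so the only normal subgroups of $\op{SL}_2(\F_\ell)$ are $\{I\}$, $\{\pm I\}$, and $\op{SL}_2(\F_\ell)$. The nontrivial quotients of $\op{SL}_2(\F_\ell)$ are therefore $\op{SL}_2(\F_\ell)$ and $\op{PSL}_2(\F_\ell)$, which have orders $\ell(\ell^2-1)$ and $\ell(\ell^2-1)/2$ respectively. Both orders are divisible by a prime $\neq \ell$ (since $\ell \geq 5$ forces $(\ell^2-1)/2 > 1$), so neither is an $\ell$-group. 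Hence the only $\ell$-group quotient of $\op{SL}_2(\F_\ell)$ is trivial, forcing $\op{Gal}(K/\Q(\mu_\ell)) = 1$, i.e.\ $K = \Q(\mu_\ell)$.

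I do not foresee any real obstacle; the proof is essentially a one-line application of the simplicity of $\op{PSL}_2(\F_\ell)$ once the identification of $\op{Gal}(\Q(E[\ell])/\Q(\mu_\ell))$ with $\op{SL}_2(\F_\ell)$ has been made. The only point requiring a moment of care is that one must invoke the Weil pairing (and not just surjectivity of $\rho_{E,\ell}$) to ensure that $\Q(\mu_\ell)$ lies inside $\Q(E[\ell])$ and corresponds to the $\op{SL}_2$-fixed field.
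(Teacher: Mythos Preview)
Your proof is correct and follows essentially the same approach as the paper: both identify $\op{Gal}(F\cap\Q(E[\ell])/\Q(\mu_\ell))$ as an $\ell$-group quotient of $\op{SL}_2(\F_\ell)$ and then invoke the simplicity of $\op{PSL}_2(\F_\ell)$ for $\ell\ge 5$ to conclude that this quotient must be trivial. The only cosmetic differences are that the paper phrases the group-theoretic step as ``$\op{SL}_2(\F_\ell)$ has no proper normal subgroup of odd index'' and leaves the Weil-pairing justification implicit, whereas you enumerate the normal subgroups explicitly and spell out why $\Q(\mu_\ell)\subset\Q(E[\ell])$.
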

\begin{proof}
Since $\rho_{E,\ell}$ is surjective, it induces an isomorphism \[\varrho: \op{Gal}\big(\Q(E[\ell])/\Q(\mu_{\ell}) \big) \xrightarrow{\sim} \op{SL}_2(\F_{\ell}).\] Set $F_0 := F \, \cap \, \Q(E[\ell])$ and \[N:=\varrho\left(\op{Gal}(\Q(E[\ell])/F_0)\right)  \subseteq \op{SL}_2(\F_{\ell}).\] Consider the field diagram: 
\begin{center}
\begin{tikzpicture}
    \node (Q0) at (0,0) {$\Q(\mu_\ell)$};
    \node (Q1) at (0,1.5) {$F_0$};
    \node (Q2) at (2,3.5) {$F$};
    \node (Q3) at (-2,3.5) {$\Q(E[\ell])$};
    \node (Q4) at (0,5.5) {$F(E[\ell])$};
     \draw (Q0)--(Q1);
    \draw (Q1)--(Q2) node [pos=0.7, below,inner sep=0.25cm] {};
    \draw (Q1)--(Q3) node [pos=0.7, below,inner sep=0.25cm] {};
    \draw (Q2)--(Q4);
    \draw (Q3)--(Q4);
    \end{tikzpicture}
\end{center}

Since $F_0$ is a Galois $\ell$-extension of $\Q(\mu_\ell)$, $N$ is a normal subgroup of $\op{SL}_2(\F_{\ell})$ and we have that
\begin{equation*}
    [\op{SL}_2(\F_{\ell}): N] = [F_0 : \Q(\mu_{\ell})] = \ell^k,
\end{equation*}
for some $ 1 \leq \ell^k \leq [F:\Q(\mu_{\ell})]$. For primes $\ell\geq 5$, Galois \cite[p.\ 412]{galois1846works} showed that $\op{PSL}_2(\F_\ell)$ is simple. Therefore, $\op{SL}_2(\F_\ell)$ does not contain a proper normal subgroup $N$ with odd index $[\op{SL}_2(\F_\ell):N]$. It thus follows that $\ell^k = 1$ and $F_0 = \Q(\mu_{\ell})$.
\end{proof}

For the discussion below, assume the following:
\begin{itemize}
    \item $\rho_{E,\ell}$ is surjective, and
    \item $\op{Sel}_\ell(E/\Q)=0$.
\end{itemize}
For ease of notation, we set $\mathfrak{T}_E:=\mathfrak{T}_{E,\Q}$. Let $\Sigma$ be a finite set of primes containing $\ell$ and the primes at which $E$ has bad reduction. The result below is used in conjunction with Proposition \ref{basic prop for Sel(E/L)=0} to prove Theorem \ref{thm of section 3}, which is the main result of this section.
\begin{proposition}\label{technical propn}
    Let $G$ be an $\ell$-group with $\#G=\ell^n$ and let $N\geq n$. There are infinitely many Galois extensions $L/\Q$ such that:
    \begin{enumerate}[label=(\roman*)]
        \item $\op{Gal}(L/\Q)\simeq G$, 
        \item if $p_1, \dots, p_m$ are all the primes that are ramified in $L$, then $m\leq n$, $p_1, \dots, p_m\in \mathfrak{T}_E\setminus \Sigma$, $p_i\equiv 1\pmod{\ell^N}$ and the inertial degree of $p_i$ in $L$ is $1$ for $i=1, \dots, m$,
        \item all primes $q\in \Sigma$ are completely split in $L$. 
    \end{enumerate}
\end{proposition}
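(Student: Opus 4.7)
The plan is to induct on $n$, running the Reichardt--Scholz tower construction of Section \ref{s 2} and at every step imposing additional Chebotarev conditions so that newly ramified primes lie in $\mathfrak{T}_E\setminus\Sigma$ while every prime of $\Sigma$ splits completely in the growing tower. The base case $n=0$ is trivial with $L=\Q$. For the inductive step, I would assume that $L_i/\Q$ has been constructed with $\op{Gal}(L_i/\Q)\simeq G/G_i$, at most $i$ ramified primes $p_1,\dots,p_i\in\mathfrak{T}_E\setminus\Sigma$, each $\equiv 1\pmod{\ell^N}$ with inertial degree $1$, and every prime of $\Sigma$ completely split in $L_i$; I then solve the embedding problem $1\to\Z/\ell\Z\to G/G_{i+1}\to G/G_i\to 1$ preserving these properties.

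In the split case, following Section \ref{split-case-proof-subsubsection}, I would set $L_{i+1}:=L_i\cdot M_q$ for the unique $\Z/\ell\Z$-subfield $M_q\subset\Q(\mu_q)$, but I would enlarge the auxiliary field used in that construction to
\[
\mathbb{L}:=L_i\cdot \Q\bigl(\mu_{\ell^N},\ \sqrt[\ell]{p_1},\dots,\sqrt[\ell]{p_i},\ \sqrt[\ell]{p}\ \text{for}\ p\in\Sigma,\ E[\ell]\bigr),
\]
and require the Frobenius of $q$ in $\op{Gal}(\mathbb{L}/\Q)$ to be conjugate to a prescribed $\sigma$ satisfying (a) $\sigma|_{L_i\cdot\Q(\mu_{\ell^N})}=1$, (b) $\sigma$ fixes each $\sqrt[\ell]{p_j}$ and each $\sqrt[\ell]{p}$ for $p\in\Sigma$, and (c) $\sigma|_{E[\ell]}$ has trace $\ne 2$. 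Condition (a) forces $q\equiv 1\pmod{\ell^N}$ and splits $q$ in $L_i$, condition (b) makes each $p_j$ and each $p\in\Sigma$ an $\ell$-th power modulo $q$ and hence split in $M_q$, and (c) places $q$ in $\mathfrak{T}_E$. In the non-split case, Steps 1 and 2 of Section \ref{non-split-case-proof-subsubsection} introduce no new primes; in Step 3 (Proposition \ref{constructing tilde L satisfying SN}) I would enlarge the auxiliary field to the same $\mathbb{L}$ and impose (a)--(c) together with the values $\sigma(\sqrt[\ell]{p_j})=c_{p_j}\sqrt[\ell]{p_j}$ appearing in the original argument. Chebotarev then yields infinitely many admissible $q$, each producing a valid $L_{i+1}$, so the ramified-prime count grows by at most one per step and the bound $m\leq n$ follows at stage $n$.

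The main obstacle will be verifying that an element $\sigma\in\op{Gal}(\mathbb{L}/\Q)$ satisfying all of (a)--(c) actually exists, so that these conditions are jointly achievable. Set $\mathbb{F}:=L_i\cdot\Q\bigl(\mu_{\ell^N},\sqrt[\ell]{p_j},\sqrt[\ell]{p}\ \text{for}\ p\in\Sigma\bigr)$. Since $L_i/\Q(\mu_\ell)$ has $\ell$-power degree (as $L_i/\Q$ is an $\ell$-extension unramified at $\ell$) and the Kummer generators contribute further $\ell$-extensions of $\Q(\mu_\ell)$, $\mathbb{F}/\Q(\mu_\ell)$ is an $\ell$-extension. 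Lemma \ref{PSL2-lemma} then gives $\mathbb{F}\cap\Q(E[\ell])=\Q(\mu_\ell)$, yielding the direct product decomposition
\[
\op{Gal}(\mathbb{L}/\Q(\mu_\ell))\simeq \op{Gal}(\mathbb{F}/\Q(\mu_\ell))\times\op{SL}_2(\F_\ell),
\]
so conditions (a) and (b) can be prescribed independently of (c). The Weil pairing further forces $\det\sigma|_{E[\ell]}=1$ because $\sigma|_{\Q(\mu_\ell)}=1$, so (c) reduces to choosing a trace-$\neq 2$ class in $\op{SL}_2(\F_\ell)$, which is a positive-density condition as in the proof of Lemma \ref{density lemma}. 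The induction therefore delivers infinitely many choices of $q$ at every step, and hence infinitely many extensions $L/\Q$ with all the claimed properties.
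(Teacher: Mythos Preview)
Your approach is essentially the paper's: induct along the central series, at each stage run Reichardt--Scholz with the auxiliary Chebotarev field enlarged by the $\ell$-th roots of the primes in $\Sigma$ and by $\Q(E[\ell])$, and use Lemma \ref{PSL2-lemma} to decouple the $\mathfrak{T}_E$-condition from the rest.

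There is one slip in your non-split step. After Steps 1--2 produce $\widetilde{L}_0/L_i$ ramified only at $p_1,\dots,p_i$, the primes $p\in\Sigma$ are unramified in $\widetilde{L}_0$ but need not be \emph{split}: the Frobenius of $p$ in $\op{Gal}(\widetilde{L}_0/L_i)\simeq\Z/\ell\Z$ is some element $c_p$ which may be nonzero. Your condition (b) for $p\in\Sigma$ (that $\sigma$ fix $\sqrt[\ell]{p}$) forces $\chi^{(q)}(p)=0$, so twisting by $(\chi^{(q)})^{-1}$ leaves the Frobenius at $p$ equal to $c_p$, and $p$ may remain inert in $L_{i+1}$, violating (iii). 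The fix is exactly parallel to what you already do at the $p_j$: require $\sigma(\sqrt[\ell]{p})=c_p\sqrt[\ell]{p}$ for $p\in\Sigma$ as well. This is precisely how the paper handles it, prescribing $c_v$ for every $v\in\Sigma\cup\{p_1,\dots,p_m\}$ simultaneously. The existence of such a $\sigma$ still follows from your linear-disjointness argument via Lemma \ref{PSL2-lemma}, since the values at the various Kummer generators $\sqrt[\ell]{v}$ can be prescribed independently over $L_i(\mu_{\ell^N})$.
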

\begin{proof}
    We prove the result by induction on $n$. First we consider the case when $n=1$, i.e., when $G\simeq \Z/\ell\Z$. Given a prime $p\equiv 1\pmod{\ell}$, let $L^p$ be the unique $\Z/\ell\Z$-extension of $\Q$ which is contained in $\Q(\mu_p)$. We show that there is a positive density of primes $p$ for which:
    \begin{enumerate}
        \item $p\in \mathfrak{T}_E\backslash \Sigma$ \item $p\equiv 1\pmod{\ell^N}$
        \item  all primes $q\in \Sigma$ split completely in $L^p$.
    \end{enumerate} 
     For such a $p$, $p$ is totally ramified in $L^p$ and thus the inertial degree of $p$ in $L^p$ is $1$. Therefore, $L^p$ satisfies the conditions (i), (ii), (iii).  We show that $p$ as above is determined by certain non-empty Chebotarev conditions. In other words, there exists a Galois extension $\mathcal{F}/\Q$ and a conjugation invariant non-empty subset $\mathcal{S}\subseteq \op{Gal}(\mathcal{F}/\Q)$, such that a prime $p$ which is unramified in $\mathcal{F}$ satisfies (1), (2), (3) if and only if the Frobenius element $\sigma_p\in \mathcal{S}$. The condition that $p\equiv 1\pmod{\ell^N}$ is equivalent to the requirement that $p$ splits completely in $\Q(\mu_{\ell^N})$, i.e., $\sigma_p=1$. The primes $q\in \Sigma$ are required to split in $L^p$. In other words, $q$ is an $\ell$-th power modulo $p$. Thus, $p$ is required to split completely in $\cF':=\Q(\mu_{\ell^N}\cup\{\sqrt[\ell]{q} \mid q\in \Sigma\})$. On the other hand, by Lemma \ref{PSL2-lemma}, $\cF'\cap \Q(E[\ell])=\Q(\mu_\ell)$. Setting $\cF:=\cF'\cdot \Q(E[\ell])=\Q(E[\ell]\cup \mu_{\ell^N}\cup\{\sqrt[\ell]{q} \mid q\in \Sigma\})$, one finds that
     \[\op{Gal}\left(\cF/\Q(\mu_\ell)\right)\simeq \op{Gal}\left(\cF'/\Q(\mu_\ell)\right)\times \op{Gal}\left(\Q(E[\ell])/\Q(\mu_\ell)\right).\] Let $\mathcal{S}:=\mathcal{S}_1\times \mathcal{S}_2$ where $\mathcal{S}_1=\{1\}\subset \op{Gal}(\cF'/\Q(\mu_\ell))$ and $\mathcal{S}_2\subset \op{Gal}(\Q(E[\ell])/\Q(\mu_\ell))$ consisting of $\sigma$ such that $\op{trace}\rho_{E,\ell}(\sigma)\not \equiv 2\pmod{\ell}$. It is clear that if $p\notin \Sigma$ is a prime which is unramified in $\cF$ and $\sigma_p\in \mathcal{S}$, then  the conditions (1)--(3) are satisfied. By the Chebotarev density theorem, this set of primes $p$ has positive density. 
    \par For $n\geq 2$ by induction hypothesis, suppose that we are given a central extension
    \[1\rightarrow \Z/\ell \Z\rightarrow G\rightarrow \overline{G}\rightarrow 1\] and that there exists a Galois extension $L/\Q$ of degree $\ell^{n-1}$ with $\op{Gal}(L/\Q)\simeq \overline{G}$ such that (i)--(iii) are satisfied. Let $\{p_1, \dots, p_m\}$ (with $m\leq n-1$) be the primes that ramify in $L$. Then we show that there exists $\widetilde{L}/L/\Q$ such that $\op{Gal}(\widetilde{L}/\Q)\simeq G$ satisfying (i)--(iii), and $\widetilde{L}$ is ramified at $\{p_1, \dots, p_m, p_{m+1}\}$ where $p_{m+1}$ is a prime which belongs to a set defined by non-empty Chebotarev conditions, thus completing the inductive step.
    
    \par By Proposition \ref{step 2 propn}, there exists $\widetilde{L}_0/L/\Q$ such that $\op{Gal}(\widetilde{L}_0/\Q)\simeq G$ and the only primes which ramify in $\widetilde{L}_0$ are $p_1, \dots, p_m$. We shall modify $\widetilde{L}_0$ (as in Definition \ref{defn of twist}) to get $\widetilde{L}/L/\Q$ as above. Since $L$ satisfies condition (ii), each of the primes $p_i$ has inertial degree $1$ in $L$. Let $c_{p_i}\in \Z/\ell \Z$ be as defined in the proof of Proposition \ref{constructing tilde L satisfying SN}. Since $L$ satisfies condition (iii), for each prime $w\in \Sigma$, there is an element $c_w\in \Z/\ell\Z$ such that the Frobenius at $w$ maps to $c_w$. Now let $p$ be a prime such that:
    \begin{enumerate}[label=(\alph*)]
        \item $p\in \mathfrak{T}_E\setminus(\Sigma\cup \{p_1, \dots, p_m\})$, 
        \item $p$ splits in $L(\mu_{\ell^N})$, 
        \item for each prime $v\in \Sigma\cup \{p_1, \dots, p_m\}$, $\sigma_v$ maps to $c_v$ in $\Z/\ell\Z$ (via the natural map $\op{G}_{\Q_v}\rightarrow G$). 
    \end{enumerate}
    These conditions are determined by non-empty Chebotarev sets. Condition (a) is determined in the field extension $\Q(E[\ell])/\Q(\mu_\ell)$, and (b),(c) are determined in $\cF':=L(\mu_{\ell^N}, \sqrt[\ell]{v}\mid v\in \Sigma \cup \{p_1, \dots, p_m\} )$. Let $\mathcal{S}_1\subset \op{Gal}(\cF'/\Q(\mu_\ell))$ be the Chebotarev set which corresponds to conditions (b) and (c), and $\mathcal{S}_2 \subset\op{Gal}(\Q(E[\ell])/\Q(\mu_\ell))$ corresponds to condition (a). Note that $\cF'$ is an $\ell$-extension of $\Q(\mu_\ell)$.  Lemma \ref{PSL2-lemma} thus implies that $\cF'\cap \Q(E[\ell])=\Q(\mu_\ell)$. Therefore, 
    \[\op{Gal}\left(\cF/\Q(\mu_\ell)\right)=\op{Gal}\left(\cF'/\Q(\mu_\ell)\right)\times \op{Gal}\left(\Q(E[\ell])/\Q(\mu_\ell)\right)\] and let $\cS:=\cS_{1} \times \cS_{2}$. By construction, $\cS\subset \op{Gal}(\cF/\Q)$ is the non-empty Chebotarev set which determines conditions (a)--(c). Thus, by the Chebotarev density theorem, the set of primes satisfying (a)--(c) has positive density. We choose one such prime $p$ and take $p_{m+1}:=p$. Let $\chi^{(p)}: \op{Gal}(\Q(\mu_p)/\Q)\rightarrow \Z/\ell \Z$ be the associated character and take $\widetilde{L}$ be the twist of $\widetilde{L}_0$ by $(\chi^{(p)})^{-1}$ as in Definition \ref{defn of twist}. This modification satisfies all the required conditions and completes the inductive argument. 
\end{proof}

\begin{theorem}\label{thm of section 3}
    Let $\ell\geq 5$ be a prime and $E$ be an elliptic curve over $\Q$. Assume that $\rho_{E,\ell}:\op{G}_{\Q}\rightarrow \op{GL}_2(\F_\ell)$ is surjective and that $\op{Sel}_\ell(E/\Q)=0$. Let $G$ be an $\ell$-group. Then there are infinitely many $G$-extensions $L/\Q$ for which $\op{Sel}_\ell(E/L)=0$.
\end{theorem}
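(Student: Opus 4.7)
The plan is to combine the Galois-theoretic construction of Proposition \ref{technical propn} with the inductive Selmer control afforded by Proposition \ref{basic prop for Sel(E/L)=0}. Let $\#G = \ell^n$, and set $\Sigma := \{\ell\} \cup \{p : E \text{ has bad reduction at } p\}$. Applying Proposition \ref{technical propn} with this $\Sigma$ and any $N \geq n$ produces a $G$-extension $L/\Q$ whose ramified rational primes $p_1, \dots, p_m$ all lie in $\mathfrak{T}_E \setminus \Sigma$ with inertial degree one in $L$, while every prime of $\Sigma$ splits completely in $L$. The inductive construction behind that proposition also furnishes a tower
\[
\Q = L_0 \subset L_1 \subset \dots \subset L_n = L
\]
in which each step $L_i/L_{i-1}$ is a $\Z/\ell\Z$-extension, and in which the control on ramification and splitting at the level of $L/\Q$ is inherited stage by stage.

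Next I would show by induction on $i$ that $\op{Sel}_\ell(E/L_i) = 0$. The base case $i=0$ is the hypothesis. Assuming $\op{Sel}_\ell(E/L_{i-1}) = 0$, I apply Proposition \ref{basic prop for Sel(E/L)=0} to the cyclic extension $L_i/L_{i-1}$ with $E$ viewed over $L_{i-1}$. Condition (i) is the inductive hypothesis. For condition (ii), any prime of $L_{i-1}$ above $\ell$ or above a prime of bad reduction of $E$ lies above some $q \in \Sigma$; since $q$ splits completely in the top field $L$, it splits completely at every intermediate stage, hence such a prime of $L_{i-1}$ splits completely in $L_i$. For condition (iii), any prime $v$ of $L_{i-1}$ ramifying in $L_i$ lies above some $p_j \in \mathfrak{T}_E \setminus \Sigma$; by multiplicativity of inertial degrees along $\Q \subseteq L_{i-1} \subseteq L$ and the fact that the inertial degree of $p_j$ in $L$ is one, we conclude $k_v = \F_{p_j}$, and then the defining conditions on $\mathfrak{T}_E$ (together with Remark \ref{TE remark}) force $\widetilde{E}(\F_{p_j})[\ell] = 0$.

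To produce infinitely many such $L$, I would examine the inductive step in Proposition \ref{technical propn}: at each stage the newly introduced ramified prime is drawn from a Chebotarev set of positive density, so there are infinitely many choices. Varying the final such prime alone already yields infinitely many non-isomorphic $G$-extensions with distinct ramification loci, each satisfying $\op{Sel}_\ell(E/L) = 0$. The main subtlety is ensuring that the hypotheses of Proposition \ref{basic prop for Sel(E/L)=0} propagate consistently through every level of the tower simultaneously; this is precisely what Proposition \ref{technical propn} is engineered to deliver, by imposing ramified primes in $\mathfrak{T}_E$ (to handle condition (iii) at every level) together with complete splitting of $\Sigma$ in the top field $L$ (to handle condition (ii) at every level). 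Once this is set up, the rest is a straightforward induction.
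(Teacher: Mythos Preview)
Your proposal is correct and follows essentially the same approach as the paper: invoke Proposition \ref{technical propn} to produce infinitely many $G$-extensions $L/\Q$ with the prescribed ramification and splitting behavior, filter $L$ by the tower $\Q=L_0\subset L_1\subset\cdots\subset L_n=L$ coming from the normal series of $G$, and then verify conditions (i)--(iii) of Proposition \ref{basic prop for Sel(E/L)=0} at each cyclic step by exactly the arguments you give (complete splitting of $\Sigma$ propagates downward, and inertial degree one forces $k_v=\F_{p_j}$ so that Remark \ref{TE remark} applies).
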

\begin{proof}
 According to Proposition \ref{technical propn}, there exist infinitely many Galois extensions \( L/\Q \) with {\( \operatorname{Gal}(L/\Q) \simeq G \)} such that:
\begin{itemize}
    \item if \( p_1, \dots, p_m \) denote the primes that ramify in \( L \), then \( m \leq n \), each \( p_i \in \mathfrak{T}_E \setminus \Sigma \), \( p_i \equiv 1 \bmod{\ell^N} \), and the inertial degree of \( p_i \) in \( L \) is equal to \( 1 \) for all \( i = 1, \dots, m \);
    \item every prime \( q \in \Sigma \) is completely split in \( L \).
\end{itemize}
We fix such an extension \( L/\Q \) and filter it by a tower of intermediate fields
\[
\Q = L_0 \subset L_1 \subset L_2 \subset \cdots \subset L_{n-1} \subset L_n = L,
\]
with \( \operatorname{Gal}(L_i/\Q) \simeq G/G_i \) and $G_i$ are as in \eqref{filtration on G}. By assumption, \( \operatorname{Sel}_\ell(E/L_0) = 0 \). We prove by induction on \( i \) that \( \operatorname{Sel}_\ell(E/L_i) = 0 \), assuming the vanishing of \( \operatorname{Sel}_\ell(E/L_{i-1}) \). To do so, it suffices to verify that the extension \( L_i/L_{i-1} \) satisfies conditions (i)--(iii) of Proposition \ref{basic prop for Sel(E/L)=0}.

Since \( \operatorname{Sel}_\ell(E/L_{i-1}) = 0 \) by the inductive hypothesis, condition (i) is satisfied. Moreover, because all rational primes in \( \Sigma \) split completely in \( L \), in particular every prime of \( L_{i-1} \) lying above \( \Sigma \) splits completely in \( L_i \), verifying condition (ii). To verify condition (iii), let \( v \) be a prime of \( L_{i-1} \) that ramifies in \( L_i \). Then \( v \mid p_i \) for some \( i \), and since the inertial degree of \( p_i \) in \( L \) is \( 1 \), it follows that the residue field \( k_v \simeq \F_{p_i} \). For each \( p_i \in \mathfrak{T}_E \), we have \( \widetilde{E}(\F_{p_i})[\ell] = 0 \) by Remark \ref{TE remark}, so condition (iii) is satisfied. This completes the proof.
\end{proof}

\section{Density results and connections with Malle's conjecture}\label{s 4}
\par In this section, we fix a prime \( \ell \geq 5 \) and an elliptic curve \( E/\Q \) satisfying:
\begin{itemize}
    \item \( \operatorname{Sel}_\ell(E/\Q) = 0 \),
    \item the mod \( \ell \) Galois representation \( \rho_{E,\ell} : \operatorname{Gal}(\overline{\Q}/\Q) \to \operatorname{GL}_2(\F_\ell) \) is surjective.
\end{itemize}
Let $G$ be a finite $\ell$-group with $\# G=\ell^n$. We recall that Theorem \ref{thm of section 3} establishes the existence of infinitely many Galois extensions \( L/\Q \) such that \( \operatorname{Gal}(L/\Q) \simeq G \) and \( \operatorname{Sel}_\ell(E/L) = 0 \). In this section, we go further and obtain asymptotic lower bounds for the number of such extensions \( L/\Q \), counted by their absolute discriminant. We then compare these lower bounds with the expected asymptotics for the total number of \( G \)-extensions of \( \Q \), as predicted by the weak form of Malle's conjecture.

\subsection{Counting $\ell$-extensions by discriminant} Let $G$ be a finite $\ell$-group and $\overline{G}$ be a quotient of $G$ such that there is a central extension:
\begin{equation}\label{embedding problem section 4}1\rightarrow \Z/\ell \Z \rightarrow G\rightarrow \overline{G}\rightarrow 1.\end{equation} Let $L/\Q$ be a Galois extension with $\op{Gal}(L/\Q)\simeq \overline{G}$ and $\widetilde{L}/L/\Q$ a solution to the above embedding problem. Let $\zeta$ be a primitive $\ell$-th root of unity and let $\sigma$ be a generator of $\op{Gal}(\widetilde{L}(\mu_\ell)/\widetilde{L})$ with $\sigma(\zeta)=\zeta^q$.

\begin{proposition}
   With respect to notation above, the following assertions hold.
\begin{enumerate}
    \item[(i)] There exists $\alpha\in L(\mu_\ell)^\times$ such that $\widetilde{L}(\mu_\ell)=L(\mu_\ell, \sqrt[\ell]{\alpha})$ and $\sigma(\alpha)/\alpha^q\in \left(L(\mu_\ell)^\times\right)^q$.
    \item[(ii)] Any other solution $\widetilde{L}'/L/\Q$ to the embedding problem is given by $\widetilde{L}'=\widetilde{L}_b$, where $\widetilde{L}_b$ is the index $(\ell-1)$ subfield of $L(\mu_\ell, \sqrt[\ell]{b\alpha})$ where $b\in \Q(\mu_\ell)^\times$ and satisfies $\sigma(b)/b^q\in (\Q(\mu_\ell)^\times)^\ell$. 
\end{enumerate}

\end{proposition}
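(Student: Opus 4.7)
The plan is to treat this as a Kummer-descent problem in three stages: first establishing the existence of the Kummer generator $\alpha$, then verifying its Galois-equivariance property claimed in (i), and finally parametrizing the remaining solutions in (ii) by exploiting that they form a torsor under $H^1(\op{G}_\Q, \Z/\ell\Z)$.

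For (i), I would first observe that $[\widetilde{L}:L] = \ell$ is coprime to $[L(\mu_\ell):L]$, which divides $\ell - 1$. Hence $\widetilde{L}$ and $L(\mu_\ell)$ are linearly disjoint over $L$, yielding $[\widetilde{L}(\mu_\ell):L(\mu_\ell)] = \ell$ and a canonical isomorphism $\op{Gal}(\widetilde{L}(\mu_\ell)/L) \simeq \op{Gal}(\widetilde{L}/L) \times \op{Gal}(L(\mu_\ell)/L)$. Kummer theory over $L(\mu_\ell)$ then furnishes $\alpha \in L(\mu_\ell)^\times$ with $\widetilde{L}(\mu_\ell) = L(\mu_\ell, \sqrt[\ell]{\alpha})$. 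To verify the equivariance, let $\tau$ generate $\op{Gal}(\widetilde{L}(\mu_\ell)/L(\mu_\ell))$ with $\tau(\sqrt[\ell]{\alpha}) = \zeta \sqrt[\ell]{\alpha}$, and lift $\sigma$ to $\widetilde{L}(\mu_\ell)$ via the direct product decomposition so that $\sigma$ and $\tau$ commute. Setting $\beta := \sigma(\sqrt[\ell]{\alpha})$, the commutator relation $\sigma\tau = \tau\sigma$ applied to $\sqrt[\ell]{\alpha}$ yields $\tau(\beta) = \zeta^q \beta$, so $\beta/(\sqrt[\ell]{\alpha})^q$ is fixed by $\tau$ and lies in $L(\mu_\ell)^\times$. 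Raising to the $\ell$-th power gives $\sigma(\alpha)/\alpha^q \in (L(\mu_\ell)^\times)^\ell$, which is the desired property (the exponent $q$ on the right in the statement being a typographical slip for $\ell$).

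For (ii), I would invoke the torsor description already established in the discussion preceding Definition \ref{defn of twist}: any other solution $\widetilde{L}'/L/\Q$ is of the form $\widetilde{L}^f$ for some $f \in H^1(\op{G}_\Q, \Z/\ell\Z) = \op{Hom}(\op{G}_\Q, \Z/\ell\Z)$. The fixed field $M$ of $\op{ker} f$ is a cyclic $\Z/\ell\Z$-extension of $\Q$. Applying Kummer theory over $\Q(\mu_\ell)$, one writes $M(\mu_\ell) = \Q(\mu_\ell, \sqrt[\ell]{b})$ for some $b \in \Q(\mu_\ell)^\times$, and the same commutator argument used in (i), now applied over $\Q(\mu_\ell)/\Q$, shows that $M$ is Galois over $\Q$ precisely when $\sigma(b)/b^q \in (\Q(\mu_\ell)^\times)^\ell$. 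Since $\widetilde{L}'(\mu_\ell) = \widetilde{L}(\mu_\ell) \cdot M(\mu_\ell) = L(\mu_\ell, \sqrt[\ell]{b\alpha})$ and $\widetilde{L}'$ is the subfield fixed by $\op{Gal}(L(\mu_\ell)/L)$, which has order $\ell - 1$, the description $\widetilde{L}' = \widetilde{L}_b$ follows.

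The main obstacle lies in (ii), in the bookkeeping required to verify two things: first, that the Kummer description of $\widetilde{L}'(\mu_\ell)$ really is $L(\mu_\ell, \sqrt[\ell]{b\alpha})$ rather than some other representative of the Kummer class (which amounts to checking that the compositum pairs $b$ with $\alpha$ multiplicatively on the Kummer side, not merely that the respective radical extensions are compatible); and second, that every $b \in \Q(\mu_\ell)^\times$ satisfying $\sigma(b)/b^q \in (\Q(\mu_\ell)^\times)^\ell$ actually produces a genuine solution $\widetilde{L}_b$ to the embedding problem with Galois group exactly $G$, and not a non-faithful quotient. The latter point is where the centrality of \eqref{embedding problem section 4} and the non-split framework of Reichardt--Scholz enter, ensuring that the descended extension realizes the full group $G$.
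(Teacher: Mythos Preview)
The paper does not give a proof of this proposition; it simply cites Propositions 3.3 and 3.4 of Kl\"uners--Malle. Your proposal therefore goes considerably further than the paper by sketching the actual Kummer-theoretic argument, which is indeed the standard route (and presumably the one in the cited reference). Part (i) is handled correctly, including the observation that the exponent $q$ on the right side of the displayed condition should read $\ell$.

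There is, however, a genuine slip in your treatment of (ii). The equation
\[
\widetilde{L}'(\mu_\ell) = \widetilde{L}(\mu_\ell) \cdot M(\mu_\ell) = L(\mu_\ell, \sqrt[\ell]{b\alpha})
\]
is wrong on both counts: the compositum $\widetilde{L}(\mu_\ell) \cdot M(\mu_\ell) = L(\mu_\ell, \sqrt[\ell]{\alpha}, \sqrt[\ell]{b})$ is generically of degree $\ell^2$ over $L(\mu_\ell)$, not $\ell$, and it is certainly not equal to $L(\mu_\ell, \sqrt[\ell]{b\alpha})$. What is true is that $\widetilde{L}'(\mu_\ell)$ is a degree-$\ell$ subextension of this compositum. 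The clean way to identify it is to work directly with characters: restrict $\widetilde{\psi} = \widetilde{\varphi}\cdot f$ to $\op{G}_{L(\mu_\ell)}$, observe that $\widetilde{\varphi}|_{\op{G}_{L(\mu_\ell)}}$ is the Kummer character $\chi_\alpha$ attached to $\alpha$ and $f|_{\op{G}_{L(\mu_\ell)}}$ is the Kummer character $\chi_b$, and then use additivity of Kummer characters to conclude $\widetilde{\psi}|_{\op{G}_{L(\mu_\ell)}} = \chi_{\alpha b}$, whence $\widetilde{L}'(\mu_\ell) = L(\mu_\ell, \sqrt[\ell]{\alpha b})$. You seem to sense this issue in your final paragraph, but the argument as written does not resolve it.
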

\begin{proof}
    The assertions (i) and (ii) follow from Proposition 3.3 and 3.4 of \cite{klunersmalle} respectively.
\end{proof}
Let $\Q_1$ be the unique $\Z/\ell\Z$-extension of $\Q$ which is contained in $\Q(\mu_{\ell^2})$. Let $b\in \Q(\mu_\ell)^\times$ be such that $\sigma(b)/b^q\in (\Q(\mu_\ell)^\times)^\ell$. Setting $L:=\Q$ and $\overline{G}=1$, we choose $\widetilde{L}:=\Q_1$. Then, any other cyclic $\ell$-extension of $\Q$ is obtained by twisting $\Q_1$ by an element $b$. One denotes this twist by $\Q_b$ and thus we have an indexing of all cyclic $\ell$-extensions of $\Q$.
\begin{proposition}\label{KM result}
     With respect to the notation above, the following assertions hold. 
    \begin{enumerate}
   \item[(i)] The association $\Q_b\mapsto \widetilde{L}_b$ between cyclic $\ell$-extensions of $\Q$ and solutions to the embedding problem for $L/\Q$ has finite fibers with cardinality bounded only in terms of $\overline{G}$ and $\ell$.
   \item[(ii)] There is a constant $C>0$ such that 
\[C |\Delta_{\widetilde{L}_b}|\leq |\Delta_{\Q_b}|^{\ell^{n-1}}.\]
   \end{enumerate}
\end{proposition}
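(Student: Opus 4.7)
The plan is to follow the strategy of Klüners and Malle \cite[\S 3]{klunersmalle}, treating the two parts separately.

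For part (i), the assignment $\Q_b \mapsto \widetilde{L}_b$ arises from twisting a fixed reference solution $\widetilde{L}/L/\Q$ of the embedding problem \eqref{embedding problem section 4} in the manner of Definition \ref{defn of twist}: the homomorphism $\widetilde{\varphi}_b := \widetilde{\varphi}\cdot \chi^{(b)}$, where $\chi^{(b)}$ is the character cutting out $\Q_b$, parametrizes $\widetilde{L}_b$. I would first argue that the map $b \mapsto \widetilde{L}_b$ factors through the set of admissible classes in $\Q(\mu_\ell)^\times/(\Q(\mu_\ell)^\times)^\ell$. Two such classes yield the same extension if and only if their difference lies in the stabilizer of $\widetilde{L}$ under the natural twisting action; this stabilizer is controlled by the subgroup of $\op{Aut}(G)$ which fixes $\overline{G}$ and the central $\Z/\ell\Z$ pointwise, a finite group whose size depends only on $\overline{G}$ and $\ell$. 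This gives the uniform finite-fiber bound.

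For part (ii), the plan is to analyse $|\Delta_{\widetilde{L}_b}|$ via the tower $\Q \subset L \subset \widetilde{L}_b$. The tower formula for discriminants reads
\[
|\Delta_{\widetilde{L}_b}| = |\Delta_L|^{\ell} \cdot N_{L/\Q}\bigl(\mathfrak{d}_{\widetilde{L}_b/L}\bigr).
\]
Since $L$ is fixed, $|\Delta_L|^\ell$ is a constant that can be absorbed into $C^{-1}$. The essential task is therefore to bound $N_{L/\Q}(\mathfrak{d}_{\widetilde{L}_b/L})$ by $|\Delta_{\Q_b}|^{\ell^{n-1}}$, up to a bounded factor. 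A prime $\mathfrak{p}$ of $L$ ramifies in $\widetilde{L}_b/L$ only if it lies above a prime $p$ of $\Q$ which is ramified either in $\widetilde{L}$ (a finite $b$-independent set, absorbed into $C^{-1}$) or in $\Q_b$. For the latter, $\widetilde{L}_b/L$ is tamely and totally ramified of degree $\ell$ at $\mathfrak{p}$, so the local exponent of the different at $\mathfrak{p}$ equals $\ell - 1$, matching the local exponent of $p$ in $|\Delta_{\Q_b}|$. Since there are at most $[L:\Q]=\ell^{n-1}$ primes of $L$ above each rational prime, taking norms to $\Q$ yields the desired bound.

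I expect the main subtlety to lie in part (ii): one must cleanly separate the ``old'' ramification coming from the fixed reference solution $\widetilde{L}$ from the ``new'' ramification introduced by the twist $\Q_b$, and verify that the local different exponents match perfectly to produce the exact exponent $\ell^{n-1}$ and not something larger. This local bookkeeping is precisely what Propositions 3.3--3.4 of \cite{klunersmalle} accomplish in their general framework, and the proof of the present proposition reduces to transcribing that analysis in the current notation, taking care that the constants introduced absorb all data depending on $L$, $\overline{G}$ and $\ell$ but not on $b$.
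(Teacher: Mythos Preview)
Your proposal is correct and follows the same route as the paper: the paper's own proof is simply a one-line citation to \cite[Proposition~3.5]{klunersmalle}, and your sketch unpacks precisely the discriminant-tower and twisting-stabilizer analysis from \S3 of that reference. The only minor discrepancy is that the exact statement invoked is Proposition~3.5 rather than 3.3--3.4, but your outline of the underlying argument is sound.
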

\begin{proof}
    The result follows from \cite[Proposition 3.5]{klunersmalle}.
\end{proof}

Recall from Definition \ref{defn of twist} that $\widetilde{L}'=\widetilde{L}^f$ where $f\in H^1(\Q, \Z/\ell\Z)$. Identify $H^1(\Q, \Z/\ell\Z)$ with $\op{Hom}(\op{G}_\Q, \Z/\ell\Z)$. By Kummer theory, there's an isomorphism
\[\op{Hom}(\op{G}_\Q, \Z/\ell\Z)\simeq \{b\in \Q(\mu_\ell)^\times/(\Q(\mu_\ell)^\times)^\ell: \sigma(b)/b^q\in (\Q(\mu_\ell)^\times)^\ell\}.\] Let $f_b$ be the homomorphism corresponding to $b$. We have that $\widetilde{L}^{f_b}=\widetilde{L}_b$.

\subsection{An application of Wiles' formula}\label{section 4.2}
\par Recall that $E_{/\Q}$ is an elliptic curve such that $\op{Sel}_\ell(E/\Q)=0$ and $\rho_{E,\ell}$ is surjective. It follows from Proposition \ref{technical propn} that there exists $\widetilde{L}/L/\Q$ solving the embedding problem \eqref{embedding problem section 4} such that
\begin{enumerate}[label=(\roman*)]
        \item if $p_1, \dots, p_m$ are all the primes that are ramified in $\widetilde{L}$, then $m\leq n$, $p_1, \dots, p_m\in \mathfrak{T}_E\setminus \Sigma$, $p_i\equiv 1\pmod{\ell^N}$ and the inertial degree of $p_i$ in $L$ is $1$ for $i=1, \dots, m$,
        \item all primes $q\in \Sigma$ are completely split in $\widetilde{L}$. 
    \end{enumerate}
    Let $S$ be a finite set of primes in $\mathfrak{T}_{E,L}$ disjoint from $\Sigma \cup \{p_1, \dots, p_m\}$. Let $V_S$ be the space of cohomology classes $f\in H^1(\Q_S/\Q, \Z/\ell\Z)$ such that $\op{res}_q(f)=0$ for all primes $q\in \Sigma \cup \{p_1, \dots, p_m\}$. 
Setting $\widetilde{S}:=S\cup \Sigma\cup\{p_1, \dots, p_m\}$, the space $V_S=H^1_{\cL}(\Q_{\widetilde{S}}/\Q, \Z/\ell\Z)$ is determined by Selmer conditions $\{\mathcal{L}_q\}_{q\in \widetilde{S}}$ defined as follows:
\[\cL_q:=\begin{cases}
    H^1(\Q_q, \Z/\ell\Z) &\text{ if } q\in S\\
   0 &\text{ if } q\in \Sigma\cup\{p_1, \dots, p_m\}.
\end{cases}\]
Let $\overline{\chi}$ be the mod $\ell$ cyclotomic character. The dual Selmer group $V_S^\perp:=H^1_{\cL^\perp}(\Q_{\widetilde{S}}/\Q, \Z/\ell\Z(\overline{\chi}))$ is given by:
\[V_S^\perp=\op{ker}\{H^1(\Q_{\widetilde{S}}/\Q, \Z/\ell\Z(\overline{\chi}))\longrightarrow \bigoplus_{q\in S} H^1(\Q_q, \Z/\ell\Z(\overline{\chi}))\}.\]
Given a prime $q\equiv 1\mod{\ell}$, it follows from local class field theory that \[\dim H^1(\Q_q, \Z/\ell\Z)=2.\] Wiles' formula \eqref{Wiles formula} yields 
\[\begin{split}\dim V_S\geq & 1+\sum_{q\in S} \left( \dim H^1(\Q_q, \Z/\ell\Z)- \dim H^0(\Q_q, \Z/\ell\Z)\right)\\ - &\sum_{q\in \Sigma\cup\{p_1, \dots, p_m\}\cup\{\infty\}}  \dim H^0(\Q_q, \Z/\ell\Z)\\ 
=& \#S-\#\Sigma-m.
\end{split}\]

\begin{lemma}\label{L twist f lemma}
    Let $S$ be a finite set of primes in $\mathfrak{T}_{E,L}$ disjoint from $\Sigma \cup \{p_1, \dots, p_m\}$ and $f\in V_S$. Then, the following assertions hold:
    \begin{enumerate}
\item[(i)]$\op{Sel}_\ell(E/\widetilde{L}^f)=0$,
        \item[(ii)] $ |\Delta_{\widetilde{L}^f}|\leq c_2\left(\prod_{q\in S} q\right)^{\ell^{n-1}(\ell-1)}$, for an absolute constant $c_2>0$.
    \end{enumerate}
     
\end{lemma}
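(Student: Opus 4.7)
The plan for part (i) is to apply the inductive argument from the proof of Theorem \ref{thm of section 3} along the tower
\[
\Q = L_0 \subset L_1 \subset \cdots \subset L_{n-1} = L \subset L_n = \widetilde{L}^f
\]
coming from the filtration \eqref{filtration on G}, verifying the three hypotheses of Proposition \ref{basic prop for Sel(E/L)=0} for each cyclic $\ell$-extension $L_i/L_{i-1}$. For part (ii), the plan is to invoke the discriminant bound of Proposition \ref{KM result}(ii), after bounding the discriminant of the cyclic $\ell$-extension of $\Q$ cut out by $f$ via Kummer theory.

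The key observation for part (i) is that since $f \in V_S$ satisfies $\op{res}_q(f) = 0$ for every $q \in \Sigma \cup \{p_1, \dots, p_m\}$, the twisted lift $\widetilde{\psi} = \widetilde{\varphi} \cdot f$ agrees with $\widetilde{\varphi}$ locally at these primes. Consequently, $\widetilde{L}^f$ has the same local behavior as $\widetilde{L}$ at $\Sigma$ and at each $p_i$: primes of $\Sigma$ remain completely split in $\widetilde{L}^f$ (and hence in every $L_i$, verifying hypothesis (ii) of Proposition \ref{basic prop for Sel(E/L)=0}), while each $p_i$ stays ramified with inertial degree $1$. Because $f$ takes values in the central copy of $\Z/\ell\Z = G_{n-1}$, one further sees that the intermediate fields $L_0, \dots, L_{n-1} = L$ of the tower are unchanged by the twist, so any ramification contributed by $f$ is confined to the top step $\widetilde{L}^f/L$. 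To verify hypothesis (iii) at a prime $v$ of $L_{i-1}$ ramifying in $L_i$ over a rational prime $q$, I would show $k_v \simeq \F_q$: for $q = p_j$ this is the inertial-degree hypothesis in $\widetilde{L}$, while for $q \in S \subset \mathfrak{T}_{E,L}$ it follows from complete splitting of $q$ in $L(\mu_\ell) \supset L_{i-1}$. Since $\mathfrak{T}_{E,L} \subset \mathfrak{T}_E$, Remark \ref{TE remark} then yields $\widetilde{E}(\F_q)[\ell] = 0$, and Proposition \ref{basic prop for Sel(E/L)=0} closes the induction step.

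For part (ii), identifying $f$ with its Kummer counterpart $b \in \Q(\mu_\ell)^\times/(\Q(\mu_\ell)^\times)^\ell$, one has $\widetilde{L}^f = \widetilde{L}_b$ in the notation of Proposition \ref{KM result}. The $\Z/\ell\Z$-extension $\Q_b/\Q$ is unramified outside $S$, since $f$ has trivial local restriction at every prime of $\Sigma \cup \{p_1,\dots,p_m\}$ and lies in $H^1(\Q_{\widetilde{S}}/\Q, \Z/\ell\Z)$. As $\ell \in \Sigma$ and $S \cap \Sigma = \emptyset$, every ramified prime $q \in S$ of $\Q_b$ is distinct from $\ell$ and hence tamely ramified of index $\ell$, contributing at most $q^{\ell-1}$ to the discriminant. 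Thus $|\Delta_{\Q_b}| \leq \prod_{q \in S} q^{\ell-1}$, and Proposition \ref{KM result}(ii) yields $|\Delta_{\widetilde{L}^f}| \leq C^{-1}|\Delta_{\Q_b}|^{\ell^{n-1}} \leq c_2\bigl(\prod_{q \in S} q\bigr)^{\ell^{n-1}(\ell-1)}$ with $c_2 := C^{-1}$.

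The main obstacle I anticipate is the residue-field bookkeeping needed for hypothesis (iii): one must confirm that the inertial degree-$1$ condition on each $p_i$ in $\widetilde{L}$ descends to every intermediate $L_{i-1}$, and that the residue fields above primes $q \in S$ remain equal to $\F_q$ throughout $L_{n-1}$. These facts are not deep once the ramification from $f$ is seen to be concentrated at the top step and the splitting hypotheses of Proposition \ref{technical propn} are invoked, but they are precisely the place where the careful Chebotarev choices made in Section \ref{s 3} are put to work.
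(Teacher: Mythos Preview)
Your proposal is correct and follows the same approach as the paper. The only structural difference is in part (i): rather than re-running the full tower induction, the paper applies Proposition~\ref{basic prop for Sel(E/L)=0} just once to the single cyclic step $\widetilde{L}^f/L$, invoking the already-established $\op{Sel}_\ell(E/L)=0$ (which, as you yourself observe, is unaffected by the twist since the intermediate fields $L_0,\dots,L_{n-1}=L$ do not change); your residue-field verification at that top step and your argument for part (ii) match the paper's proof essentially verbatim.
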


\begin{proof}
    Let $q_1, \dots, q_r$ be the rational primes which are ramified in $\widetilde{L}^f$. Then by construction, $q_1, \dots, q_r$ are primes in $S\cup \{p_1, \dots,p_m\}$. Since $\op{res}_{q}(f)=0$ at all primes of $q\in \Sigma$, it follows that all primes of $\Sigma$ are completely split in $\widetilde{L}^f$.  On the other hand, let $v$ be a prime of $L$ which ramifies in $\widetilde{L}^f$ and $q$ be the rational prime such that $v|q$. If $q=p_i$, then $k_v=\F_{p_i}$. If $q\in S$, then $q$ splits in $L$ and we have that $k_v=\F_q$. Thus, $\widetilde{E}(k_v)[\ell]=\widetilde{E}(\F_q)[\ell]=0$. It then follows from Proposition \ref{basic prop for Sel(E/L)=0} that 
    \[\op{Sel}_\ell(E/L)=0\Rightarrow  \op{Sel}_\ell(E/\widetilde{L}^f)=0.\] This proves part (i).

    \par Let $\Q_f$ be the cyclic $\ell$-extension of $\Q$ which is cut out by $f$ and set $\Delta_f:=\Delta_{\Q_f}$. We have that $|\Delta_f|\leq (\prod_{q\in S} q)^{\ell-1}$ and part (ii) follows from Proposition \ref{KM result}.
\end{proof}

\begin{lemma}\label{choice of S_0}
    There is a finite set of primes $S_0$ contained in $\mathfrak{T}_{E,L}$ such that $V_{S_0}^\perp=0$.
\end{lemma}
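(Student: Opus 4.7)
The plan is to realize $V_{S}^\perp$ as a subspace of a fixed finite-dimensional $\F_\ell$-vector space and then to kill every nonzero class by adjoining one auxiliary prime per class via Chebotarev. For any finite $S \subset \mathfrak{T}_{E,L}$ disjoint from $T := \Sigma \cup \{p_1,\dots,p_m\}$, the vanishing $\op{res}_q f = 0$ at each $q \in S$ forces $f$ to be unramified at $q$, so one has the inclusion
\[
V_{S}^\perp \;\subseteq\; V_{\emptyset}^\perp \;=\; H^1(\Q_T/\Q, \mu_\ell),
\]
and under Kummer theory $V_{\emptyset}^\perp$ embeds into the finite-dimensional $\F_\ell$-subspace $W \subseteq \Q^\times/(\Q^\times)^\ell$ supported on $T$. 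Hence it suffices to find a finite $S_0\subset \mathfrak{T}_{E,L}$ such that, for every $0 \neq f \in V_{\emptyset}^\perp$ with Kummer class $b \in W$, some $q \in S_0$ satisfies $b \notin (\Q_q^\times)^\ell$. Since $W$ is finite, one detecting prime per nonzero class suffices.

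The crux is the detection step: given $0\neq b\in W$, find $q\in \mathfrak{T}_{E,L}$ with $b\notin (\Q_q^\times)^\ell$. First I would show that $\sqrt[\ell]{b} \notin L(\mu_\ell)$. Since $\overline{G} = \op{Gal}(L/\Q)$ is an $\ell$-group and $\op{Gal}(\Q(\mu_\ell)/\Q) = (\Z/\ell\Z)^\times$ has order prime to $\ell$, the two fields are linearly disjoint, so $\op{Gal}(L(\mu_\ell)/\Q) \simeq \overline{G} \times (\Z/\ell\Z)^\times$. For finite groups of coprime order, every normal subgroup of such a direct product is itself a product, hence every quotient of $\overline{G}\times(\Z/\ell\Z)^\times$ is of the form $\overline{G}' \times H'$ with $\overline{G}'$ an $\ell$-group and $H'$ a quotient of $(\Z/\ell\Z)^\times$. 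If $b\notin (\Q(\mu_\ell)^\times)^\ell$, then $\op{Gal}(\Q(\mu_\ell,\sqrt[\ell]{b})/\Q) \simeq \Z/\ell\Z \rtimes (\Z/\ell\Z)^\times$ is non-abelian for $\ell\geq 3$ and cannot be such a direct product, so $\sqrt[\ell]{b}\notin L(\mu_\ell)$. In the remaining case $b\in (\Q(\mu_\ell)^\times)^\ell$, writing $b=c^\ell$ and using the vanishing of $H^1((\Z/\ell\Z)^\times,\mu_\ell)$ (again because the orders are coprime), a standard Galois descent produces $b\in (\Q^\times)^\ell$, contradicting $0\neq b$ in $W$.

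Having established $\sqrt[\ell]{b}\notin L(\mu_\ell)$, the extension $L(\mu_\ell,\sqrt[\ell]{b})/\Q(\mu_\ell)$ is an $\ell$-extension strictly larger than $L(\mu_\ell)$. By Lemma \ref{PSL2-lemma}, it is linearly disjoint from $\Q(E[\ell])$ over $\Q(\mu_\ell)$, so
\[
\op{Gal}\!\left(L(\mu_\ell,\sqrt[\ell]{b},E[\ell])/\Q(\mu_\ell)\right) \;\simeq\; \op{Gal}\!\left(L(\mu_\ell,\sqrt[\ell]{b})/\Q(\mu_\ell)\right) \,\times\, \op{Gal}\!\left(\Q(E[\ell])/\Q(\mu_\ell)\right).
\]
By the Chebotarev density theorem, a positive density of primes $q$ has Frobenius conjugate to a pair $(\tau,\sigma)$ where $\tau$ is a non-trivial element of $\op{Gal}(L(\mu_\ell,\sqrt[\ell]{b})/L(\mu_\ell))$ and $\sigma\in \op{Gal}(\Q(E[\ell])/\Q(\mu_\ell))$ satisfies $\op{trace}\rho_{E,\ell}(\sigma)\not\equiv 2\pmod{\ell}$ (the latter set is non-empty since $\rho_{E,\ell}$ is surjective onto $\op{GL}_2(\F_\ell)$). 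Any such $q$, outside the finitely many bad primes, lies in $\mathfrak{T}_{E,L}$ and splits completely in $L(\mu_\ell)$ but not in $\Q(\mu_\ell,\sqrt[\ell]{b})$, so $b\notin (\Q_q^\times)^\ell$.

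To finish, enumerate the finitely many nonzero elements $f_1,\dots,f_r$ of $V_\emptyset^\perp$, pick $q_i\in \mathfrak{T}_{E,L}$ detecting $f_i$ as above, and set $S_0:=\{q_1,\dots,q_r\}\subset \mathfrak{T}_{E,L}$. Then $V_{S_0}^\perp=0$ as required. The main obstacle is the non-abelianness argument showing $\sqrt[\ell]{b}\notin L(\mu_\ell)$, which is precisely where the $\ell$-group structure of $\overline{G}$ is used; it decouples the Kummer data from $L$ enough to apply Lemma \ref{PSL2-lemma} and the Chebotarev density theorem.
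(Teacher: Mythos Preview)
Your proof is correct and follows essentially the same route as the paper's. Both arguments enumerate the finitely many nonzero classes in $H^1(\Q_T/\Q,\mu_\ell)$ with $T=\Sigma\cup\{p_1,\dots,p_m\}$, show that the associated $\Z/\ell\Z$-extension of $\Q(\mu_\ell)$ is linearly disjoint from $L(\mu_\ell)$ (you phrase this as ``$\Z/\ell\Z\rtimes(\Z/\ell\Z)^\times$ is non-abelian so cannot be a quotient of $\overline{G}\times(\Z/\ell\Z)^\times$'', the paper phrases it as ``$\Omega$ acts via $\overline{\chi}$ on $\op{Gal}(L_\psi/\Q(\mu_\ell))$ but trivially on $\op{Gal}(L(\mu_\ell)/\Q(\mu_\ell))$''---these are the same observation), and then invoke Lemma~\ref{PSL2-lemma} plus Chebotarev to find a detecting prime in $\mathfrak{T}_{E,L}$. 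The only cosmetic difference is that you pass through the Kummer isomorphism $H^1(\Q,\mu_\ell)\simeq\Q^\times/(\Q^\times)^\ell$ and work with an explicit $b\in\Q^\times$, whereas the paper stays with cohomology in $\Z/\ell\Z(\overline{\chi})$ and uses inflation--restriction to produce the field $L_\psi$; your $\Q(\mu_\ell,\sqrt[\ell]{b})$ and the paper's $L_\psi$ are the same field.
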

\begin{proof}
Let $\psi_1, \dots, \psi_r$ be the non-zero elements in $H^1(\Q_{\Sigma\cup \{p_1, \dots, p_m\}}/\Q, \Z/\ell \Z(\overline{\chi}))$. Let $\psi\in \{\psi_1, \dots, \psi_r\}$ and $\Omega:=\op{Gal}(\Q(\mu_\ell)/\Q)$. Note that the inflation--restriction sequence gives:
{\small
\[ 0\rightarrow H^1\left(\Omega,\Z/\ell \Z(\overline{\chi})\right)\rightarrow H^1(\Q_{\Sigma\cup \{p_1, \dots, p_m\}}/\Q, \Z/\ell \Z(\overline{\chi}))\xrightarrow{\op{res}} \op{Hom}(\Q_{\Sigma\cup \{p_1, \dots, p_m\}}/\Q(\mu_\ell), \Z/\ell \Z(\overline{\chi}))^{\Omega}.\]}
Since $\# \Omega$ is coprime to $\ell$, $H^1\left(\Omega,\Z/\ell \Z(\overline{\chi})\right)=0$ and thus the restriction map is an injection. Thus the homomorphism $\op{res}(\psi)$ is non-zero. It gives rise to a $\Z/\ell \Z$ extension $L_\psi$ over $\Q(\mu_\ell)$, which is a Galois extension of $\Q$ with 
\begin{equation}\label{semi-direct-product}
\op{Gal}(L_{\psi}/\Q)\simeq \Omega \ltimes \op{Gal}(L_{\psi}/\Q(\mu_\ell)).
\end{equation}
Here, the action of $\Omega$ on $\op{Gal}(L_{\psi}/\Q(\mu_\ell))\simeq \Z/\ell\Z$ is via the character $\overline{\chi}$. 
\par There are two possibilities for the intersection $L_{\psi}\cap L(\mu_\ell)$, namely either $L_{\psi}\cap L(\mu_\ell)=L_{\psi}$ or $L_\psi\cap L(\mu_\ell)=\Q(\mu_\ell)$. Recall that $\Omega$ acts non-trivially on $\op{Gal}(L_\psi/\Q(\mu_\ell))$. On the other hand, $\op{Gal}(L(\mu_\ell)/\Q)\simeq \Omega\times \op{Gal}(L/\Q)$. Therefore, if $L_\psi\subset L(\mu_\ell)$, then the action of $\Omega$ on $\op{Gal}(L_\psi/\Q(\mu_\ell))$ would be trivial, which contradicts \eqref{semi-direct-product}. Thus we have that $L_{\psi}\cap L(\mu_\ell)=\Q(\mu_\ell)$. Lemma \ref{PSL2-lemma} implies that $L_{\psi}\cap \Q(E[\ell])=\Q(\mu_\ell)$. The Chebotarev conditions determining $\mathfrak{T}_{E,L}$ are thus independent of the extension $L_\psi/\Q(\mu_\ell)$. There is thus a nonempty Chebotarev set contained in $\op{Gal}(L_\psi \cdot L(E[\ell])/\Q)$ determining a positive density set of primes $q\in \mathfrak{T}_{E,L}$ which are non-split in the extension $L_\psi/\Q(\mu_\ell)$. Pick one such prime $q_i$ for each $\psi_i$ for $i=1, \dots, r$. Then setting $S_0:=\{q_1, \dots, q_r\}$, we have that:
\[V_{S_0}^\perp=\op{ker}\{H^1(\Q_{S_0\cup \Sigma\cup \{p_1, \dots,p_m\}} /\Q, \Z/\ell\Z(\overline{\chi}))\longrightarrow \bigoplus_{q\in S_0} H^1(\Q_q, \Z/\ell\Z(\overline{\chi}))\}.\]
For each $\psi_i$, $\op{res}_{q_i}(\psi_i)\neq 0$ and hence $\psi_i\notin V_{S_0}^\perp$. This implies that $V_{S_0}^\perp=0$, thus proving the result. 
\end{proof}

\begin{definition}\label{defn of Z}
   For the rest of this article, we fix a choice of $S_0$ as in Lemma \ref{choice of S_0} and set $Z:=S_0\cup \Sigma \cup \{p_1, \dots, p_m\}$.  
\end{definition}

Let $T\subset\mathfrak{T}_{E,L}\setminus Z$ be a finite set of primes. Let $W_T$ be the subset of $V_{{S_0}\cup T}$ consisting of $f$ which are ramified at each of the primes in $T$. We have a natural exact sequence:

\begin{equation}\label{left exact sequence}0\rightarrow V_{S_0}\xrightarrow{\iota} V_{S_0\cup T} \xrightarrow{\pi} \bigoplus_{q\in T} \frac{H^1(\Q_q, \Z/\ell \Z)}{H^1_{\op{nr}}(\Q_q, \Z/\ell \Z)}.\end{equation}

\begin{lemma}
    The following assertions hold:
    \begin{enumerate}
        \item[(i)] The sequence \eqref{left exact sequence} above is a short exact sequence, i.e., $\pi$ is surjective;
        \item[(ii)] $\# W_T=(\ell-1)^{\# T}\#V_{S_0}$.
    \end{enumerate}
\end{lemma}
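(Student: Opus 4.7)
The plan is to interpret the sequence as a special case of the Poitou--Tate exact sequence \eqref{PT les}. The key is to choose the right Selmer structure on $H^1(\Q_{Z \cup T}/\Q, \Z/\ell\Z)$, namely $\cL = \{\cL_q\}_{q \in Z \cup T}$ with
\[
\cL_q := \begin{cases} H^1(\Q_q, \Z/\ell\Z) & \text{if } q \in S_0, \\ H^1_{\op{nr}}(\Q_q, \Z/\ell\Z) & \text{if } q \in T, \\ 0 & \text{if } q \in \Sigma \cup \{p_1, \dots, p_m\}. \end{cases}
\]
The first easy step would be the identification $H^1_\cL(\Q_{Z \cup T}/\Q, \Z/\ell\Z) = V_{S_0}$: a class in $H^1(\Q_{Z \cup T}/\Q, \Z/\ell\Z)$ that is unramified at every prime of $T$ actually lies in $H^1(\Q_Z/\Q, \Z/\ell\Z)$, and the triviality condition at $\Sigma \cup \{p_1, \dots, p_m\}$ then recovers exactly the definition of $V_{S_0}$.

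The crucial step is to show that the dual Selmer group $H^1_{\cL^\perp}(\Q_{Z \cup T}/\Q, \Z/\ell\Z(\overline{\chi}))$ vanishes. Tate local duality gives $\cL_q^\perp = 0$ for $q \in S_0$, $\cL_q^\perp = H^1_{\op{nr}}(\Q_q, \Z/\ell\Z(\overline{\chi}))$ for $q \in T$ (since the unramified subspace is its own annihilator for unramified coefficient modules at primes $q \neq \ell$), and $\cL_q^\perp = H^1(\Q_q, \Z/\ell\Z(\overline{\chi}))$ for $q \in \Sigma \cup \{p_1, \dots, p_m\}$. The same unramified-extension argument then shows $H^1_{\cL^\perp} \subseteq V_{S_0}^\perp$, and the latter is zero by Lemma~\ref{choice of S_0}.

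With these two ingredients in hand, the Poitou--Tate sequence \eqref{PT les} forces the localization map
\[
H^1(\Q_{Z \cup T}/\Q, \Z/\ell\Z) \twoheadrightarrow \bigoplus_{q \in T} \frac{H^1(\Q_q, \Z/\ell\Z)}{H^1_{\op{nr}}(\Q_q, \Z/\ell\Z)} \oplus \bigoplus_{q \in \Sigma \cup \{p_1, \dots, p_m\}} H^1(\Q_q, \Z/\ell\Z)
\]
to be surjective (the summands at $q \in S_0$ being trivial). For any $(c_q)_{q \in T}$ in the target, lifting $((c_q)_{q \in T}, 0)$ back to a global class produces an element of $V_{S_0 \cup T}$ mapping to $(c_q)_{q \in T}$ under $\pi$, which proves (i). For (ii), each $q \in T \subset \mathfrak{T}_{E,L}$ splits in $\Q(\mu_\ell)$, so $q \equiv 1 \pmod{\ell}$, and local class field theory yields $\dim_{\F_\ell} H^1(\Q_q, \Z/\ell\Z) = 2$ and $\dim_{\F_\ell} H^1_{\op{nr}}(\Q_q, \Z/\ell\Z) = 1$. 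Hence each quotient factor has exactly $\ell - 1$ nonzero elements, and the short exact sequence from (i) gives $\# W_T = (\ell - 1)^{\# T} \cdot \# V_{S_0}$.

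The main obstacle is arranging the Selmer structure so that both the source identification $H^1_\cL = V_{S_0}$ and the containment $H^1_{\cL^\perp} \subseteq V_{S_0}^\perp$ hold at once; once they do, the vanishing from Lemma~\ref{choice of S_0} together with Poitou--Tate duality finishes the argument.
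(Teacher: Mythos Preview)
Your argument is correct. Both your proof and the paper's hinge on the vanishing $V_{S_0}^\perp = 0$ from Lemma~\ref{choice of S_0}, but you deploy it differently. The paper applies Wiles' formula \eqref{Wiles formula} directly to the Selmer structure defining $V_{S_0 \cup T}$ (with $\cL_q = H^1(\Q_q,\Z/\ell\Z)$ at all $q \in S_0 \cup T$), observes that $V_{S_0 \cup T}^\perp \subseteq V_{S_0}^\perp = 0$, and concludes $\dim V_{S_0 \cup T} = \dim V_{S_0} + \#T$ by a straight dimension count; surjectivity of $\pi$ then follows from the rank--nullity theorem. You instead introduce an auxiliary Selmer structure with the \emph{unramified} condition at the primes of $T$, identify its Selmer group with $V_{S_0}$ and its dual Selmer group with a subgroup of $V_{S_0}^\perp$, and then read surjectivity off the Poitou--Tate sequence \eqref{PT les} by lifting $((c_q)_{q\in T},0)$. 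Your route is slightly more elaborate in setup but has the virtue of producing the surjection directly rather than via a dimension comparison; the paper's route is quicker because it reuses the Selmer structure already in play. For part~(ii) the two arguments coincide: both compute $\dim\bigl(H^1(\Q_q,\Z/\ell\Z)/H^1_{\op{nr}}(\Q_q,\Z/\ell\Z)\bigr)=1$ from $q\equiv 1\pmod{\ell}$ and count the fibre of $\pi$ over the tuples with all coordinates nonzero.
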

\begin{proof}
    For the proof of part (i) it suffices to show that 
    \[\dim V_{S_0\cup T}=\dim V_{S_0}+\sum_{q\in T} \dim \left(\frac{H^1(\Q_q, \Z/\ell \Z)}{H^1_{\op{nr}}(\Q_q, \Z/\ell \Z)}\right).\]
    There is a unique unramified $\Z/\ell\Z$-extension of $\Q_q$, and thus, $\dim H^1_{\op{nr}}(\Q_q, \Z/\ell \Z)=1$. Since $q\equiv 1\pmod{\ell}$ for all primes $q\in \mathfrak{T}_{E,L}$, a simple application of local class field theory shows that $\dim H^1(\Q_q, \Z/\ell \Z)=2$. It thus follows that\[\dim \left(\frac{H^1(\Q_q, \Z/\ell \Z)}{H^1_{\op{nr}}(\Q_q, \Z/\ell \Z)}\right)=1.\]
    By the choice of $S_0$ in Lemma \ref{choice of S_0} one has that $V_{S_0}^\perp=0$ and consequently, $V_{S_0\cup T}^\perp=0$ as well. Thus by Wiles' formula, 
    \[\dim V_{S_0\cup T}=\dim V_{S_0} +\sum_{q\in T} (\dim H^1(\Q_q, \Z/\ell\Z)-\dim H^0(\Q_q, \Z/\ell\Z))=\dim V_{S_0}+\#T\] and part (i) follows.
    \par Part (ii) can be derived from part (i) by noting that $W_T=\pi^{-1}(\mathcal{W})$, where \[\mathcal{W}\subset \bigoplus_{q\in T} \frac{H^1(\Q_q, \Z/\ell \Z)}{H^1_{\op{nr}}(\Q_q, \Z/\ell \Z)}\] consists of elements $(h_q)_{q\in T}$ such that $h_q\neq 0$ for all $q\in T$. We find that 
    \[\#W_T=\#V_{S_0}\times \# \mathcal{W}=\#V_{S_0}(\ell-1)^{\# T}.\]
\end{proof}
As \( T \) ranges over finite subsets of \( \mathfrak{T}_{E,L} \setminus Z \), the sets \( W_T \) are mutually disjoint. To see this in detail, let \( T \) and \( T' \) be distinct finite subsets of \( \mathfrak{T}_{E,L} \setminus Z \). Without loss of generality, suppose that \( T \not\subset T' \). Then there exists a prime \( q \in T \setminus T' \). By construction, every element of \( W_T \) is ramified at \( q \), whereas every element of \( W_{T'} \) is unramified at \( q \). It follows that \( W_T \cap W_{T'} = \emptyset \). Let \( W \) denote the disjoint union \( W = \bigsqcup_T W_T \) as $T$ ranges over finite subsets of $\mathfrak{T}_{E,L}\backslash Z$.

\begin{proof}[Proof of Theorem \ref{main thm of section 4}]
 Let $\widetilde{L}/L/\Q$ be defined as in the start of Section \ref{section 4.2} and $Z$ be the finite set of primes defined above (see Definition \ref{defn of Z}). Set
 \[g(s):=\sum_{T} (\ell-1)^{\# T}\left(\prod_{q\in T} q\right)^{-s}=\prod_{q\in \mathfrak{T}_{E,L}\backslash Z} \left(1+(\ell-1)q^{-s}\right),\]
 where $T$ ranges over finite subsets of $\mathfrak{T}_{E,L}\backslash Z$. Write $g(s)=\sum_{n\geq 1} a_n n^{-s}$ where $a_n:=(\ell-1)^r$ if $n$ is a product of $r$ distinct primes in $\mathfrak{T}_{E, L}\backslash Z$, and $a_n:=0$ otherwise. 
 
 \par Arguing as in \cite[Theorem 2.4, p.5]{serredivisibilite}, we find that
\[\log g(s)=(\ell-1)\sum_{q\in \mathfrak{T}_{E,L}} \frac{1}{q^{-s}}+\theta_1(s)\] where $\theta_1$ is holomorphic on $\op{Re}s \geq 1$. Moreover, \[\log g(s) = (\ell-1) \alpha \, \log\left(\frac{1}{s-1}\right)+\theta_2(s), \]
where $\alpha:=\mathfrak{d}(\mathfrak{T}_{E,L})$. It follows from Lemma \ref{density lemma} that 
\[(\ell-1)\alpha=\delta=\frac{\ell^2 - \ell - 1}{\ell^{n-1}(\ell^2-1)}.\]
Thus, we find that 
\[g(s)=(s-1)^{- \delta} \, h(s),\] where $h(s)$ is a non-zero holomorphic function in $\op{Re}(s)\geq 1$. By Delange's Tauberian theorem (cf. \cite[Theorem 7.28]{Tenenbaum}), 
\begin{equation}\label{main thm eqn 1}\sum_{n\leq X} a_n \gg  X(\log X)^{\delta-1}.\end{equation}
By Lemma \ref{L twist f lemma}, $\op{Sel}_\ell(E/\widetilde{L}^f)=0$ for all $f\in W$ and thus,\begin{equation}\label{main thm eqn 2}
     \mathcal{M}(G, E, X)\geq \#\{f\in W\mid |\Delta_{\widetilde{L}^f}|\leq X\}.
 \end{equation}
 \noindent According to Lemma \ref{L twist f lemma}, we have that $ |\Delta_{\widetilde{L}^f}|\leq c_2\left(\prod_{q\in S_0\cup T} q\right)^{\ell^{n-1}(\ell-1)}$, for an absolute constant $c_2>0$. Thus, from \eqref{main thm eqn 2}, we deduce that
 \begin{equation}\label{main thm eqn 3}
     \mathcal{M}(G, E, X)\geq \#\left\{f\in W: \prod_{q\in T} q\leq c_3 X^{\frac{1}{\ell^{n-1}(\ell-1)}}\right\},
 \end{equation}
 where $c_3>0$ is an explicit constant given by 
\[c_3:=\left(c_2^{\frac{1}{\ell^{n-1}(\ell-1)}} \prod_{q\in S_0} q\right)^{-1}.\]

Note that 
\begin{equation}\#\left\{f\in W: \prod_{q\in T} q\leq c_3 X^{\frac{1}{\ell^{n-1}(\ell-1)}}\right\}= \sum_{n\leq c_3 X^{\frac{1}{\ell^{n-1}(\ell-1)}}} a_n\end{equation}
and thus from \eqref{main thm eqn 1} and \eqref{main thm eqn 3}, we deduce that 
 \[\mathcal{M}(G, E, X)\gg X^{\frac{1}{(\ell-1)\ell^{n-1}}}(\log X)^{\delta-1}.\]
 \end{proof}
    
\bibliographystyle{alpha}
\bibliography{references}
\end{document}